\newcommand{\Hs}{\mathrm H}
\newcommand{\tr}{\mathrm{tr}}
\newcommand{\dof}{\texttt{dof}}%
\newcommand{\nc}{\mathrm{nc}}
\newcommand{\cc}{\mathrm c}
\newcommand{\A}{\mathrm A}
\newcommand{\avg}[1]{\left\{\hspace{-2.1mm}\left\{ #1 \right\}\hspace{-2.1mm}\right\}}
\newcommand{\jp}[1]{\left[\!\!\left[ #1 \right]\!\!\right]}
\newcommand{\norm}[1]{|\hspace{-0.3mm}|\hspace{-0.3mm}|#1|\hspace{-0.3mm}|\hspace{-0.3mm}|}
\NewDocumentCommand{\dgal}{sO{}m}{%
  \IfBooleanTF{#1}
    {\dgalext{#3}}
    {\dgalx[#2]{#3}}%
}
\NewDocumentCommand{\dgalext}{m}{%
  \sbox0{%
    \mathsurround=0pt 
    $\left\{\vphantom{#1}\right.\kern-\nulldelimiterspace$%
  }%
  \sbox2{\{}%
  \ifdim\ht0=\ht2
    \{\kern-.45\wd2 \{#1\}\kern-.45\wd2 \}%
  \else
  \fi
}
\NewDocumentCommand{\dgalx}{om}{%
  \sbox0{\mathsurround=0pt$#1\{$}%
  \sbox2{\{}%
  \ifdim\ht0=\ht2
    \{\kern-.45\wd2 \{#2\}\kern-.45\wd2 \}%
  \else
    \mathopen{#1\{\kern-.5\wd0 #1\{}
    #2
    \mathclose{#1\}\kern-.5\wd0 #1\}}
  \fi
}
\crefname{hypothesis}{Hypothesis}{Hypotheses}
\title{Virtual element methods for HJB equations with Cordes coefficients}
\author{Ying Cai\thanks{Beijing Computational Science Research Center, Beijing 100193, China (\email{ycai@csrc.ac.cn}).}
\and Hailong Guo\thanks{School of Mathematics and Statistics, The University of Melbourne, Parkville, VIC 3010, Australia (\email{hailong.guo@unimelb.edu.au}).}
\and Zhimin Zhang\thanks{Department of Mathematics, Wayne State University, Detroit, MI 48202, USA (\email{ag7761@wayne.edu}).}
}
\newtheorem{assumption}[theorem]{Assumption}
\begin{document}

\maketitle

\begin{abstract}
In this paper, we propose and analyze both conforming and nonconforming virtual element methods (VEMs) for the fully nonlinear second-order elliptic Hamilton–Jacobi–Bellman (HJB) equations with Cordes coefficients. By incorporating stabilization terms, we establish the well-posedness of the proposed methods, thus avoiding the need to construct a discrete Miranda–Talenti estimate. We derive the optimal error estimate in the discrete $H^2$ norm for both numerical formulations. Furthermore, a semismooth Newton’s method is employed to linearize the discrete problems. Several numerical experiments using the lowest-order VEMs are provided to demonstrate the efficacy of the proposed methods and to validate our theoretical results.
\end{abstract}

\begin{keywords}
HJB equation; Virtual element; Cordes condition; Semismooth Newton's method
  \end{keywords}

\begin{AMS}
 65N30, 65N15.
\end{AMS}

\section{Introduction}
\label{sec:introduction}
Let $\varOmega\subset\mathbb R^2$ be a bounded convex polygonal domain with boundary $\partial\varOmega$. We consider
the Hamilton-Jacobi-Bellman equation, which reads as follows
\begin{equation}\label{eq:HJB}
\sup_{\alpha\in\varLambda}(L^\alpha u-f^\alpha)=0\quad\text{  in } \varOmega,\qquad u=0\quad\text{on }\partial\varOmega,
\end{equation}
where $\varLambda$ is a compact metric space and
\[L^\alpha v:=\A^\alpha:D^2v+\bm b^\alpha\cdot\nabla v-c^\alpha v.\]
Here,  $\A, \bm b, c$ and $f$ belong to the space $C(\overline\varOmega,\varLambda)$. The collection functions $\{\A^\alpha\}_{\alpha\in\varLambda}$,  $\{\bm b^\alpha\}_{\alpha\in\varLambda}$, $\{c^\alpha\}_{\alpha\in\varLambda}$
and $\{f^\alpha\}_{\alpha\in\varLambda}$ are given as follows: for every index $\alpha\in\varLambda$,
\[\begin{array}{cc}
\A^\alpha: x\mapsto \A(x,\alpha),& \bm b^\alpha: x\mapsto \bm b(x,\alpha), \\
c^\alpha: x\mapsto c(x,\alpha), & f^\alpha: x\mapsto f(x,\alpha).
\end{array}
\]
A special case of problem \eqref{eq:HJB} is the linear elliptic equation in nondivergence form:
\begin{equation}\label{eq:Ndiv}
  Lu:=\A:D^2u+\bm b\cdot\nabla u-cu=f\quad\text{  in } \varOmega,\qquad u=0\quad\text{on }\partial\varOmega.
\end{equation}
In this case, we simply assume that $\A\in L^\infty(\varOmega)$ and $f\in\ L^2(\varOmega)$.

The study of the HJB equation \eqref{eq:HJB} originates from stochastic control problems, which are of significant importance in economics and finance. The primary challenge in developing numerical methods for \eqref{eq:HJB}, in addition to addressing its nonlinearity, is its non-divergence form. Linearizing \eqref{eq:HJB} results in a sequence of problems of the form \eqref{eq:Ndiv}. The well-posedness and the concept of the solution for the HJB equation depend on the regularity of the problem's parameters and the smoothness of the domain boundary, which has led to several distinct theoretical frameworks.

When the coefficient matrix $\A\in C(\overline\varOmega)$, X. Feng and M. Neilan et al. proposed non-standard primal finite element methods for solving \eqref{eq:Ndiv} in \cite{FHN2017,NM2017,FNS2018}.
The methods and the analysis of convergence are based on establishing discrete Calderon–Zygmund estimates. For the theory of $H^2$ solutions,  the analysis of problems \eqref{eq:Ndiv} and \eqref{eq:HJB}
relies on the following Miranda–Talenti estimate.
\begin{lemma}\label{lem:MT}
Suppose that $\varOmega$ is a bounded convex domain, then there holds
\begin{equation}\label{ineq:MT}
  \|D^2v\|_{L^2(\varOmega)}\leq \|\Delta v\|_{L^2(\varOmega)}\quad\forall v\in H^2(\varOmega)\cap H_0^1(\varOmega).
\end{equation}
\end{lemma}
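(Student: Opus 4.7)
The plan is to rewrite $\int_\varOmega \bigl[(\Delta v)^2 - |D^2 v|^2\bigr]\,\dif x$ as a nonnegative boundary integral involving the curvature of $\partial\varOmega$, and then extend the resulting identity from smooth data and smooth convex domains to the full $H^2\cap H_0^1$ setting on a general convex $\varOmega$ by density and approximation.

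First I would work with $v\in C^3(\overline\varOmega)\cap H_0^1(\varOmega)$ on a smooth convex $\varOmega$. Combining the algebraic identity
\[(\Delta v)^2-|D^2 v|^2=\sum_{i,j=1}^{2}\bigl(v_{ii}v_{jj}-v_{ij}^2\bigr)\]
with the observation that, for each fixed pair $i,j$, the quantity $v_{ii}v_{jj}-v_{ij}^2=\partial_i(v_iv_{jj})-\partial_j(v_iv_{ij})$ is a divergence, and applying the divergence theorem, I would obtain
\[\int_\varOmega\bigl[(\Delta v)^2-|D^2 v|^2\bigr]\dif x=\int_{\partial\varOmega}\bigl[(\nabla v\cdot\nu)\Delta v-(D^2 v\,\nabla v)\cdot\nu\bigr]\dif s.\]
Since $v\equiv 0$ on $\partial\varOmega$, its tangential derivatives vanish, forcing $\nabla v=(\partial_\nu v)\nu$ there; differentiating $v\circ\gamma\equiv 0$ twice along an arclength parametrization $\gamma$ of $\partial\varOmega$ further gives $\tau^{\top}D^2 v\,\tau=\kappa\,\partial_\nu v$, where $\kappa$ is the curvature of $\partial\varOmega$. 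Substituting these identities together with $\Delta v=v_{\nu\nu}+\tau^{\top}D^2 v\,\tau$ collapses the boundary integrand to $\kappa(\partial_\nu v)^2$, yielding
\[\int_\varOmega|D^2 v|^2\dif x+\int_{\partial\varOmega}\kappa(\partial_\nu v)^2\dif s=\int_\varOmega(\Delta v)^2\dif x.\]
Convexity of $\varOmega$ forces $\kappa\ge 0$, so the inequality follows in this smooth regime.

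To cover $v\in H^2(\varOmega)\cap H_0^1(\varOmega)$ with $\varOmega$ still smooth, I would invoke density of $C^\infty(\overline\varOmega)\cap H_0^1(\varOmega)$ in $H^2(\varOmega)\cap H_0^1(\varOmega)$. To reach a general bounded convex $\varOmega$ (including the convex polygons relevant for this paper), I would approximate $\varOmega$ by a decreasing family of smooth convex supersets $\{\varOmega_n\}$ with $|\varOmega_n\setminus\varOmega|\to 0$, build extensions $\tilde v_n\in H^2(\varOmega_n)\cap H_0^1(\varOmega_n)$ of $v$ satisfying $\|\tilde v_n\|_{H^2(\varOmega_n)}\to\|v\|_{H^2(\varOmega)}$ and $\|\Delta\tilde v_n\|_{L^2(\varOmega_n)}\to\|\Delta v\|_{L^2(\varOmega)}$, apply the already-proven inequality on $\varOmega_n$, and pass to the limit.

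I expect this last approximation step to be the main obstacle: on a convex polygon, $C_c^\infty(\varOmega)$ is not dense in $H^2(\varOmega)\cap H_0^1(\varOmega)$ (its closure is the strictly smaller space $H_0^2(\varOmega)$), so plain interior mollification is unavailable, and constructing smooth convex envelopes of $\varOmega$ together with norm-preserving $H^2$-extensions respecting the Dirichlet condition requires care. An equivalent interior route is to carry out the divergence-theorem step edgewise along $\partial\varOmega$ (where $\kappa=0$ on each straight segment) and to verify that the discrete corner contributions are nonnegative because every interior angle of the convex polygon is strictly less than $\pi$.
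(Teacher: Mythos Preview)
The paper does not prove this lemma; it simply states the Miranda--Talenti estimate as a known classical result in the introduction and cites it as background for the $H^2$ theory. So there is no ``paper's own proof'' to compare against.

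Your outline is the standard route (essentially Grisvard's argument): reduce $\|D^2 v\|^2-\|\Delta v\|^2$ to a boundary curvature term via the divergence identity, use the Dirichlet condition to collapse the boundary integrand to $\kappa(\partial_\nu v)^2$, and invoke convexity to get $\kappa\ge 0$. The smooth-domain, smooth-$v$ step is correct as written. Your instinct that the approximation step is the delicate part is right, and the two remedies you mention are the ones that actually work: either approximate $\varOmega$ from outside by smooth convex domains and extend $v$ (this is done carefully in Grisvard's monograph), or---more directly for convex polygons---carry out the integration by parts edgewise, where each straight edge contributes zero and each convex corner contributes a nonnegative term. One small correction: approximating by \emph{supersets} $\varOmega_n\supset\varOmega$ with extensions $\tilde v_n\in H_0^1(\varOmega_n)$ is awkward because extending $v$ by zero across $\partial\varOmega$ typically fails to be $H^2$; the usual construction instead approximates $\varOmega$ from \emph{inside} by smooth convex subsets, or works with the polygon directly via the edgewise argument. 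Either way, the final inequality you state is exactly what is needed.
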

In the $H^2$ theory, the coefficient matrix $\A$ is allowed to be discontinuous, as long as the Cordes condition is satisfied.


The numerical approximation of $H^2$ solutions was first studied by I. Smears and E. S\"uli. In their pioneering work, I. Smears and E. S\"uli \cite{MS2013} presented a discontinuous Galerkin (DG) method for equation \eqref{eq:Ndiv} with discontinuous coefficients and the Cordes condition. This technique was extended to the elliptic and parabolic HJB equations in \cite{SS2014,SS2016}, and to the nondivergence form equation on the curved domain in \cite{KE2019}. These methods bypass the discrete counterpart Miranda–Talenti inequality by adding auxiliary terms in the discrete formulation.
In \cite{NW2019}, a discrete Miranda–Talenti inequality was established, and the $C^0$ interior penalty method (C0IP) was analyzed for both equations \eqref{eq:Ndiv} and \eqref{eq:HJB}. In \cite{BK2021,KS2022-FCM}, adaptive C0IP methods and their convergence properties were developed. The unified analysis of DG and C0IP methods for HJB and Isaacs problems was presented in \cite{KS2021}. The DG and C0IP methods are applicable when the penalization parameters are chosen to be sufficiently large. A discrete Miranda-Talenti-type identity was introduced, and a new $C^0$ finite element method was proposed in \cite{Wu2021}; the distinctive feature of this formulation is that it does not involve a stabilization parameter. Moreover, an auxiliary space preconditioner for this method was constructed in \cite{GW2022}.

Recently, there has been increasing interest in developing numerical methods that utilize polygonal meshes other than triangular and quadrilateral shapes. This paper aims to investigate the discretization of the HJB equation \eqref{eq:HJB} using the virtual finite element method (VEM). Since the seminal paper by L. Beirão da Veiga et al. \cite{BBCMMR2013}, substantial advancements have been made in both research and practical applications of the VEM. We focus particularly on the $H^2$-type VEM. In \cite{BM2013}, F. Brezzi and L. D. Marini first constructed and analyzed a type of $H^2$-conforming VEM for plate bending problems. Subsequent discussions on this topic were provided in \cite{CM2016}, and similar ideas were extended to polyharmonic equations by P. F. Antonietti et al. in \cite{AMV2020,AMSm32021}. $C^1$-conforming virtual elements in three dimensions for linear elliptic fourth-order problems were considered in \cite{BDR2020}. In \cite{ZCZ2016}, a $C^0$-nonconforming VEM was proposed for plate bending problems. Additionally, in \cite{AMV2018,ZZCM2018}, Morley-type virtual elements were proposed for fourth-order problems. Furthermore, a robust nonconforming VEM for general fourth-order problems with varying coefficients was presented in \cite{DH2022}.

In a recent study, G. Bonnet et al. \cite{Bonnet2024} examined the $C^1$-conforming VEM solution for linear elliptic equations in nondivergence form with Cordes coefficients. Their analysis focused exclusively on the case where equation \eqref{eq:Ndiv} does not include low-order terms.
In this paper,  we design and analyze both  the $C^1$-conforming and $C^0$-nonconforming virtual element methods for the HJB equation \eqref{eq:HJB} with Cordes coefficients. The virtual element spaces are constructed based on \cite{BM2013} for $C^1$-conforming VEM and on \cite{ZCZ2016} for $C^0$-nonconforming VEM.
However, several challenges must be addressed when designing virtual element discretization.
First, it is well-known that one feature of VEM  is the computability of a specific projection of a function in the virtual element space, which is used in designing the discretization. However, it is unclear whether the projected function satisfies a discrete Miranda-Talenti inequality, complicating the numerical formulation. To address this challenge, inspired by \cite{SS2014}, we incorporate an appropriate stabilization term into the discrete formulation. This avoids the need to rely on the discrete Miranda-Talenti inequality to establish the existence and uniqueness of the solution to the discrete problem.  In contrast, \cite{Bonnet2024} does not include this auxiliary stabilization term and instead relies on assumptions about stability constants.
Second, since the problem we are considering involves low-order terms, calculating the $H^1$ and $L^2$ projections is necessary during the virtual element discretization. The spaces introduced in \cite{BM2013} and \cite{ZCZ2016} appear to be inadequate for these projections. To address this, we use the equivalent projection technique proposed in \cite{AABMR2013} to enlarge the original virtual element space and then factor out the high-order polynomial space. This approach ensures that the required virtual element space is obtained and that the corresponding projection operators are computable.
Third, given the fully nonlinear nature of problem \eqref{eq:HJB}, a linearization method must be considered. We employ a semismooth Newton’s method to solve the discretization problem.

The rest of our paper is organized as follows. In Section \ref{sec:review}, we review the theory of the strong solution to the HJB equation and recall the Cordes condition for this equation. Next, we propose an abstract virtual element framework for solving the HJB equation and derive an abstract error bound in Section \ref{sec:framework}. Section \ref{sec:conforming} investigates the $C^1$-conforming virtual element method for the HJB equation, including stability analysis and error estimation. In Section \ref{sec:nonconforming}, we present the $C^0$-nonconforming virtual element method, focusing on the estimation of the consistent error. Section \ref{sec:SemismoothNewton} introduces a semismooth Newton's method for linearizing the discretization problem. We report numerical experiments supporting our theoretical findings in Section \ref{sec:numerical}. Finally, conclusions are drawn in the last section.

\section{Review of the strong solution to the HJB equation}\label{sec:review}
For the HJB equation \eqref{eq:HJB}, we assume that the collection of coefficients is uniformly elliptic, meaning that there exist two positive constants $\varrho_1\leq\varrho_2$ such that
\begin{equation}\label{cond:elliptic}
\varrho_1|\bm\zeta|^2\leq \bm\zeta^t\A^\alpha(x)\bm\zeta\leq\varrho_2|\bm\zeta|^2\quad \forall\bm\zeta\in \mathbb R^2,\text{ a.e. in }\varOmega,\,\,\forall\alpha\in\varLambda,
\end{equation}
where  $|\bm\zeta|$ denotes the Euclidean norm of the vector $\bm\zeta$.
However, it is well-known that the uniformly elliptic condition is not sufficient for the well-posedness of the problem \eqref{eq:HJB}. Therefore, we assume that the following Cordes condition for the coefficients holds.
\begin{definition}\label{def:cordes} (Cordes condition for the HJB equation \eqref{eq:HJB}) The coefficients satisfy
\begin{itemize}
\item If $\bm b^\alpha\not\equiv\bm 0$ or $c^\alpha\not\equiv0$ for some $\alpha\in\varLambda$, there exist $\lambda>0$ and $\varepsilon\in(0,1]$ such that
\begin{equation}\label{eq:Cordes1}
\frac{|\A^\alpha|^2+|\bm b^\alpha|^2/(2\lambda)+(c^\alpha/\lambda)^2}{(\tr \A^\alpha+c^\alpha/\lambda)^2}\leq \frac{1}{2+\varepsilon}\quad \mathrm{ a.e.}\text{ in }\varOmega,\,\,\forall\alpha\in\varLambda,
\end{equation}
where $|\A^\alpha|$ denotes the Frobenius norm of the matrix $\A^\alpha$.
\item If $\bm b^\alpha\equiv\bm0$ and $c^\alpha=0$ for all $\alpha\in\varLambda$, then there exists an $\varepsilon\in (0,1]$ such that
\[\frac{|\A^\alpha|^2}{(\tr \A^\alpha)^2}\leq\frac1{1+\varepsilon}\quad \mathrm{ a.e.}\text{ in }\varOmega.\]
\end{itemize}
\end{definition}

For each $\alpha\in\varLambda$, we define
\[\gamma^\alpha:=
\begin{cases}
  \frac{\tr \A^\alpha+c^\alpha/\lambda}{|\A^\alpha|^2+|\bm b^\alpha|^2/(2\lambda)+(c^\alpha/\lambda)^2}, & \bm b^\alpha\not\equiv\bm 0 \text{ or } c^\alpha\not\equiv0\text{ for some } \alpha\in\varLambda, \\
  \frac{\tr \A^\alpha}{|\A^\alpha|^2}, & \bm b^\alpha\equiv\bm 0 \text{ and } c^\alpha\equiv0\text{ for all } \alpha\in\varLambda.
\end{cases}
\]
From the continuity of the given data, we know that $\gamma^\alpha\in C(\overline\varOmega,\varLambda)$. The fully nonlinear operator $F_\gamma:H^2(\varOmega)\to L^2(\varOmega)$ is  defined as
\[F_\gamma[v]:=\sup_{\alpha\in\varLambda}\gamma^\alpha(L^\alpha v-f^\alpha)\qquad \forall v\in H^2(\varOmega).\]

For $\lambda$ introduced in \eqref{eq:Cordes1}, we define a linear differential operator $L_\lambda$ by
\begin{equation}
    L_\lambda v:=\Delta v-\lambda v\qquad \forall v\in H^2(\varOmega).
\end{equation}
Let $H:=H^2(\varOmega)\cap H_0^1(\varOmega)$.  Define  the operator $\mathfrak A:H\to H^*$ by
\begin{equation}
\langle\mathfrak A(u),v\rangle:=\int_\varOmega F_\gamma[u]L_\lambda v,
\end{equation}
where $H^*$ is the dual space of $H$.
It can be shown that $\mathfrak A$ is strictly monotone, and Lipschitz continuous on $H$ (see \cite{SS2014}),   which means that there exists a unique function $u \in H$ such that
\begin{equation}
\langle\mathfrak A(u),v\rangle=0\qquad \forall v\in H.
\end{equation}
We conclude the following well-posedness result for the HJB equation \eqref{eq:HJB}, cf. \cite[Theorem 3]{SS2014}.
\begin{theorem}
  There exists a unique $u\in H$ that solves equation \eqref{eq:HJB} pointwise $\mathrm{a.e.}$ in $\varOmega$.
\end{theorem}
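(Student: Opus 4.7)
The plan is to recast the pointwise HJB equation as the abstract operator equation $\mathfrak{A}(u)=0$ in $H^*$, verify that $\mathfrak{A}$ is strongly monotone and Lipschitz continuous on the Hilbert space $H$, invoke the Browder--Minty (equivalently, Zarantonello) theorem to obtain a unique variational solution, and finally translate that into the pointwise a.e.\ statement. The nontrivial analytic ingredients are Lemma~\ref{lem:MT} and a pointwise inequality enforced by the Cordes condition.

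As a preliminary step I would check that $L_\lambda:H\to L^2(\varOmega)$ is a Banach space isomorphism. Two integrations by parts give
\[
\|L_\lambda v\|_{L^2}^2 = \|\Delta v\|_{L^2}^2 + 2\lambda\|\nabla v\|_{L^2}^2 + \lambda^2\|v\|_{L^2}^2,
\]
which, combined with Lemma~\ref{lem:MT} and Poincaré, controls the full $H^2$-norm; injectivity with closed range follows, and surjectivity comes from solving the linear problem $L_\lambda v=g$ by Lax--Milgram on $H_0^1(\varOmega)$ together with $H^2$-regularity on the convex domain $\varOmega$. In particular, $\|L_\lambda \cdot\|_{L^2}$ is an equivalent norm on $H$.

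The heart of the argument is the pointwise inequality
\[
\bigl|\gamma^\alpha L^\alpha w - L_\lambda w\bigr|^2 \le (1-\varepsilon)\bigl(|D^2 w|^2+2\lambda|\nabla w|^2+\lambda^2 |w|^2\bigr)\quad\text{a.e.},
\]
valid for every $w\in H^2(\varOmega)$ and every $\alpha\in\varLambda$. This is a direct Cauchy--Schwarz consequence of the decomposition $\gamma^\alpha L^\alpha w - L_\lambda w = (\gamma^\alpha\A^\alpha-I):D^2 w + \gamma^\alpha \bm b^\alpha\cdot\nabla w + (\lambda-\gamma^\alpha c^\alpha)w$, the definition of $\gamma^\alpha$, and the Cordes bound~\eqref{eq:Cordes1}. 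Applying it with $w := u-v$ (so the $f^\alpha$ terms cancel) and taking the supremum over $\alpha$ via $|\sup a^\alpha - \sup b^\alpha|\le \sup|a^\alpha-b^\alpha|$ gives the same bound for $F_\gamma[u]-F_\gamma[v]-L_\lambda(u-v)$. Integrating, using Cauchy--Schwarz, and invoking Lemma~\ref{lem:MT} in the form $\|D^2 w\|_{L^2}\le \|\Delta w\|_{L^2}$ (so that the weighted norm on the right is bounded by $\|L_\lambda w\|_{L^2}$), yields
\[
\langle\mathfrak{A}(u)-\mathfrak{A}(v),u-v\rangle \ge (1-\sqrt{1-\varepsilon})\,\|L_\lambda(u-v)\|_{L^2}^2,
\]
and an analogous computation furnishes the Lipschitz bound. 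Zarantonello's theorem then produces a unique $u\in H$ with $\mathfrak{A}(u)=0$.

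To promote this variational identity to the pointwise statement, I would take as test function the unique $v\in H$ with $L_\lambda v = F_\gamma[u]$ provided by the preliminary step; substitution gives $\|F_\gamma[u]\|_{L^2}^2=0$, hence $F_\gamma[u]=0$ a.e. Since $\gamma^\alpha>0$ pointwise (uniform ellipticity together with~\eqref{eq:Cordes1} forces both numerator and denominator of $\gamma^\alpha$ to be strictly positive), the factor $\gamma^\alpha$ preserves the sign of $L^\alpha u - f^\alpha$, so vanishing of $\sup_\alpha\gamma^\alpha(L^\alpha u-f^\alpha)$ is equivalent to vanishing of $\sup_\alpha(L^\alpha u-f^\alpha)$, which is the HJB equation pointwise a.e. I expect the main obstacle to be the pointwise Cordes inequality: elementary but technical, and precisely the ingredient that turns the fully nonlinear operator $F_\gamma$ into a strongly monotone perturbation of the linear operator $L_\lambda$ on $H$.
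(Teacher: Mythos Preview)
Your proposal is correct and is precisely the argument the paper invokes: the paper does not prove this theorem in full but defers to \cite[Theorem~3]{SS2014}, merely recording that $\mathfrak A$ is strongly monotone and Lipschitz on $H$ and then applying Browder--Minty. Your sketch supplies exactly those details---the pointwise Cordes inequality, the Miranda--Talenti step converting the weighted $H^2$-seminorm into $\|L_\lambda\cdot\|_{L^2}$, the isomorphism of $L_\lambda$ used to recover the pointwise statement from the variational one, and the sign-preserving role of $\gamma^\alpha$---so there is no methodological difference to report.
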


\section{The virtual element framework}\label{sec:framework}
In this section, we introduce a selection of essential general ingredients for discretizing the HJB problem using virtual element methods, followed by an abstract error estimate.

Let $\mathcal K_h$ be a decomposition of $\varOmega$ into non-overlapping convex polygons, and $h$ stand for the maximum of the diameters of elements in $\mathcal K_h$. Denote $\mathcal E_h$ as the set of edges in $\mathcal K_h$, and
$\mathcal E_h^\circ$ as the set of edges in the interior of the domain.
Moreover, for any $s>0$, we introduce the broken Sobolev space
\[H^s(\mathcal K_h):=\{v\in L^2(\varOmega):v|_K\in H^s(K)~~\forall K\in\mathcal K_h\},\]
equipped with the broken $H^s$ norm and semi-norm:
\[\|\cdot\|_{s,h}:=\left(\sum_{K\in\mathcal K_h}\|v\|_{s,K}^2\right)^{\frac12}\text{   and   }|\cdot|_{s,h}:=\left(\sum_{K\in\mathcal K_h}|v|_{s,K}^2\right)^{\frac12}\]
respectively. For a given measurable set $\omega$, we use $\mathbb P_k(\omega)$ to represent the polynomial space of degree  up to $k$ on $\omega$. In this paper, $\omega$ may be an edge or an element. Additionally, we define the
piecewise  polynomial space of degree up to $k$  over the mesh $\mathcal K_h$ as
\[\mathbb P_k(\mathcal K_h):=\{q\in L^2(\varOmega):q|_K\in\mathbb P_k(K)~~\forall K\in\mathcal K_h\}.\]

For any interior edge $e$, shared by two elements $K^l$ and $K^r$, the jump and the average of a piecewise smooth function $\varphi$ across $e$ are
defined by
\[\jp{\varphi}:=(\varphi|_{K^l})|_e-(\varphi|_{K^r})|_e,\qquad\avg{\varphi}:=\frac12\Big[(\varphi|_{K^l})|_e+(\varphi|_{K^r})|_e\Big].\]
 If $e\subset\partial\varOmega$, $\jp{\varphi}$ and $\avg{\varphi}$ are determined by the trace of $\varphi$ on $e$.
 For any $e\in\mathcal E_h$, we assign a unit tangent vector denoted by $\bm t_e$, and a unit normal vector denoted by $\bm n_e$. When there is no ambiguity, we
 will omit the subscript $e$ and simply denote them as $\bm t$ and $\bm n$.

For a given integer $k\geq2$, we introduce a subspace of $H^2(\mathcal K_h)$ with specific continuity as
\[
  H_k^{2,\nc}(\mathcal K_h):=\left\{v\in H_0^1(\varOmega)\cap H^2(\mathcal K_h):\int_e\jp{\frac{\partial v}{\partial\bm n}}q=0,
  ~\forall q\in\mathbb P_{k-2}(e),~\forall e\in\mathcal E_h^\circ\right\}.
\]

 As in  \cite{BBCMMR2013,AABMR2013,BM2013,BGS2017}, we
make the following assumption regarding the partition:
\begin{assumption}\label{ams:mesh}
  We denote $h_K$ and $h_e$ as the diameters of $K\in\mathcal K_h$ and $e\in\mathcal E_h$, respectively. We assume there exists a constant  $\rho>0$ such that

  \begin{enumerate}[(i)]
\item Every element $K\in\mathcal K_h$ is star-shaped with respect to every point of a disk $D$ with radius greater than $\rho h_K$;
\item For any $K\in\mathcal K_h$ and any $e\in\mathcal E_h$ with $e\subset\partial K$, it holds that $h_e\geq \rho h_K$;
\item For any $K\in\mathcal K_h$, there holds $h_K\geq\rho h$.
\end{enumerate}
\end{assumption}

On each element $K\in\mathcal K_h$, we define a nonlinear form and a bilinear form by:
\[A^K(w;v):=\langle\mathfrak A(w),v\rangle_K-(L_\lambda w,L_\lambda v)_K\]
\[B_*^K(w,v):=(D^2w,D^2v)_K+2\lambda(\nabla w,\nabla v)_K+\lambda^2(w,v)_K,\]
where $\langle\mathfrak A(w),v\rangle_K:=\int_KF_\gamma[w]L_\lambda v$.
The bilinear form $B_\theta^K(\cdot,\cdot)$ with $\theta\in[0,1]$ is given by
\[B_\theta^K(w,v):=\theta B_*^K(w,v)+(1-\theta)(L_\lambda w,L_\lambda v)_K.\]
The global forms are then defined as
\[A(w;v):=\sum_{K\in\mathcal K_h}A^K(w;v),\quad
B_\theta(w,v):=\sum_{K\in\mathcal K_h}B_\theta^K(w,v).\]
Finally, we introduce
\[a(w;v):=\sum_{K\in\mathcal K_h}a^K(w;v):=A(w;v)+B_{\frac12}(w,v).\]

It is straightforward to verify  that $|\cdot|_{2,h}$ is a norm
on the space $H_k^{2,\nc}(\mathcal K_h)$. Consequently, we define $\norm{\cdot}_{B,h}$-norm on $H_k^{2,\nc}(\mathcal K_h)$ by
\[\norm{\cdot}_{B,h}=\left(\sum_{K\in\mathcal K_h}\norm{\cdot}_{B,h,K}^2\right)^{\frac12}:=\left[\sum_{K\in\mathcal K_h}\Big(|\cdot|_{2,K}^2+2\lambda|\cdot|_{1,K}^2+\lambda^2\|\cdot\|_{0,K}^2\Big)\right]^{\frac12},\]
which is induced by the bilinear form $B_*(\cdot,\cdot):=\sum_{K\in\mathcal K_h}B_*^K(\cdot,\cdot)$ on $H_k^{2,\nc}(\mathcal K_h)$.

In the context of the virtual element method, we impose several assumptions on the discrete space, as well as on the discrete bilinear and nonlinear forms.
\begin{assumption}\label{ams:a}
For each fixed mesh size $h>0$ and an integer $k\geq2$, we assume the following conditions hold:
  \begin{enumerate}[(1)]
\item  The finite dimensional function space $V_{h,k}$ satisfies $V_{h,k}\subset H_k^{2,\nc}(\mathcal K_h)$;
\item  There exists a discrete nonlinear form $A_h:V_{h,k}\times V_{h,k}\to\mathbb R$ that  can be decomposed as
    \[A_h(u_h;v_h)=\sum_{K\in\mathcal K_h}A_h^K(u_h;v_h),\]
    where $A_h^K(\cdot;\cdot)$ is a discrete nonlinear form defined on $V_{h,k}^K\times V_{h,k}^K$ with $V_{h,k}^K$ being the restriction of
     $V_{h,k}$ to $K$. Moreover, for each element $K$, we have $\mathbb P_k(K)\subset V_{h,k}^K$. We also point out that the construction of
      $A_h^K(\cdot,\cdot)$ is associated with $f^\alpha$.
 \item There exists a discrete bilinear form $B_{h,\theta}:V_{h,k}\times V_{h,k}\to\mathbb R$ for $0\leq\theta\leq1$, which can be decomposed  as
        \[B_{h,\theta}(u_h,v_h)=\sum_{K\in\mathcal K_h}B_{h,\theta}^K(u_h,v_h),\]
        where $B_{h,\theta}^K(\cdot,\cdot)$ is a discrete bilinear form defined on $V_{h,k}^K\times V_{h,k}^K$, and satisfies the boundedness condition
        \begin{equation}\label{ineq:boundB}B_{h,\theta}^K(v_h,w_h)\leq C\norm{v_h}_{B,h,K}\norm{w_h}_{B,h,K}\quad\forall v_h,w_h\in V_{h,k}.\end{equation}
\item We define \[a_h(u_h;v_h):=\sum_{K\in\mathcal K_h}a^K(u_h;v_h):=
A_h(u_h;v_h)+B_{h,\frac12}(u_h,v_h).\]
Then there exist two positive constants $\beta_\star$ and $\beta^\star$, independent of $h$, such that
\begin{equation}\label{ineq:cov}
  \beta_\star\norm{u_h-v_h}_{B,h}^2\leq a_h(u_h;u_h-v_h)-a_h(v_h;u_h-v_h),
\end{equation}
and
\begin{equation}\label{ineq:Lip}
 |a_h(u_h;w_h)-a_h(v_h;w_h)|\leq \beta^\star\norm{u_h-v_h}_{B,h}\norm{w_h}_{B,h}
\end{equation}
for any $u_h, v_h, w_h\in V_{h,k}$.
\end{enumerate}
\end{assumption}

Actually, the final criterion in Assumption \ref{ams:a} implies that $a_h$ is Lipschitz continuous. By applying the Browder–Minty theorem, cf. \cite{RR2004}, together with \eqref{ineq:cov} and \eqref{ineq:Lip}, we can establish the following lemma regarding the existence and uniqueness of the solution to the discrete problem.
\begin{lemma}\label{lem:e&u}
There exists a unique solution $u_h \in V_{h,k}$ such that
\begin{equation}\label{eq:dispro}
  a_h(u_h;v_h)=0\qquad \forall v_h\in V_{h,k}.
\end{equation}
\end{lemma}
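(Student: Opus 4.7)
The plan is to recast the variational problem \eqref{eq:dispro} as a nonlinear operator equation on the finite-dimensional space $V_{h,k}$ and apply the Browder--Minty theorem, as suggested in the statement. Since $V_{h,k}$ equipped with the norm $\norm{\cdot}_{B,h}$ is a finite-dimensional (hence reflexive) Hilbert space, and $a_h(u_h;\cdot)$ is linear in its second argument (both $B_{h,1/2}$ and the discrete counterpart of $\int F_\gamma[u]L_\lambda v$ being linear in $v$), the map $\mathcal{A}_h : V_{h,k} \to V_{h,k}^{\ast}$ defined by $\langle \mathcal{A}_h(u_h), v_h\rangle := a_h(u_h; v_h)$ is well posed, and \eqref{eq:dispro} becomes exactly $\mathcal{A}_h(u_h) = 0$.

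Uniqueness would be the cheapest step. If $u_h$ and $\tilde u_h$ both satisfied \eqref{eq:dispro}, then subtracting the two identities and testing with $v_h = u_h - \tilde u_h$ would give $a_h(u_h; u_h - \tilde u_h) - a_h(\tilde u_h; u_h - \tilde u_h) = 0$, and the strong monotonicity \eqref{ineq:cov} would immediately force $\beta_\star \norm{u_h - \tilde u_h}_{B,h}^2 \leq 0$, so that $u_h = \tilde u_h$.

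For existence, I would verify the three hypotheses of Browder--Minty on $(V_{h,k}, \norm{\cdot}_{B,h})$. Monotonicity is immediate from \eqref{ineq:cov}; hemicontinuity, in fact Lipschitz continuity of $\mathcal{A}_h$ in the dual norm, is immediate from \eqref{ineq:Lip}. The only point requiring a short manipulation is coercivity. Using \eqref{ineq:cov} with $v_h = 0$, I would write
\[
\langle \mathcal{A}_h(u_h), u_h \rangle = \bigl(a_h(u_h; u_h) - a_h(0; u_h)\bigr) + a_h(0; u_h) \geq \beta_\star \norm{u_h}_{B,h}^2 - \|\mathcal{A}_h(0)\|_{\ast}\,\norm{u_h}_{B,h},
\]
where $\|\mathcal{A}_h(0)\|_{\ast}$ is finite because $V_{h,k}$ is finite dimensional. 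This shows $\langle \mathcal{A}_h(u_h), u_h\rangle / \norm{u_h}_{B,h} \to \infty$ as $\norm{u_h}_{B,h} \to \infty$, so Browder--Minty yields surjectivity of $\mathcal{A}_h$ and hence a zero $u_h \in V_{h,k}$.

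I do not expect a genuine obstacle at this level of abstraction: under Assumption \ref{ams:a} the lemma is essentially a direct application of the monotone operator theorem. The real difficulty has been pushed into the hypotheses \eqref{ineq:cov} and \eqref{ineq:Lip}, whose verification for the concrete $C^1$-conforming and $C^0$-nonconforming virtual element discretizations, together with the precise construction of $A_h$ and $B_{h,\theta}$ so that the Cordes-type monotonicity survives under projection, will constitute the substantive work of the later sections.
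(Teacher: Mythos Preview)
Your proposal is correct and follows exactly the route indicated by the paper, which simply remarks that \eqref{ineq:cov} and \eqref{ineq:Lip} together with the Browder--Minty theorem yield existence and uniqueness. In fact you have been more explicit than the paper, spelling out the coercivity argument and the uniqueness step that the paper leaves implicit.
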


Finally, we end this section by establishing an abstract \textit{a priori} error bound for the formulation \eqref{eq:dispro}.
\begin{lemma}\label{lem:errorbound}
Let $u$ be the solution of the problem \eqref{eq:HJB} and let $u_h$ be the solution of the discrete problem \eqref{eq:dispro}.
If Assumptions \ref{ams:mesh} and \ref{ams:a} hold, then there exist a generic positive constant $C$, independent of $h$, such that
\begin{equation}\label{ineq:errorbound}
  \norm{u-u_h}_{B,h}\leq C\left[\inf_{v_h\in V_{k,h}}\Big(\varXi_1(v_h)+\varXi_2(v_h)\Big)+\inf_{p\in\mathbb P_k(\mathcal K_h)}\varXi_3(p)\right],
\end{equation}
where the functional $\varXi_1(v_h)$ is defined by
\[\varXi_1(v_h):=\norm{u-v_h}_{B,h},\]
which describes the approximation capability of the virtual finite element space. The consistency error term $\varXi_2(v_h)$ is given by
\[\varXi_2(v_h):=\sup_{w_h\in V_{h,k}\setminus\{0\}}\frac{|A_h(v_h;w_h)+B_{\frac12}(u,w_h)|}{\norm{w_h}_{B,h}},\]
and the polynomial consistency term takes the form
\[
  \varXi_3(p):=\norm{u-p}_{B,h}+\sup_{w_h\in V_{h,k}\setminus\{0\}}\frac1{\norm{w_h}_{B,h}}\sum_{K\in\mathcal K_h}|B_{\frac12}^K(p,w_h)-B_{h,\frac12}^K(p,w_h)|.
  \]
\end{lemma}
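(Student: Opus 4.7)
The plan is a standard energy argument adapted to the VEM setting. Fix arbitrary $v_h \in V_{h,k}$ and $p \in \mathbb{P}_k(\mathcal K_h)$. The triangle inequality splits the error as $\norm{u-u_h}_{B,h} \leq \varXi_1(v_h) + \norm{v_h - u_h}_{B,h}$, so the task reduces to controlling the discrete quantity $\norm{v_h - u_h}_{B,h}$. Setting $w_h := v_h - u_h \in V_{h,k}$, I would combine the strong monotonicity \eqref{ineq:cov} with the Galerkin relation $a_h(u_h; w_h)=0$ coming from \eqref{eq:dispro} to obtain
\[
\beta_\star \norm{w_h}_{B,h}^{2} \leq a_h(v_h; w_h) - a_h(u_h; w_h) = a_h(v_h; w_h).
\]

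The heart of the proof is to decompose $a_h(v_h; w_h) = A_h(v_h; w_h) + B_{h,\frac12}(v_h, w_h)$ so that each piece matches one of the three functionals $\varXi_i$. I would insert $\pm B_{\frac12}(u, w_h)$ and write
\[
a_h(v_h; w_h) = \bigl[A_h(v_h; w_h) + B_{\frac12}(u, w_h)\bigr] + \bigl[B_{h,\frac12}(v_h, w_h) - B_{\frac12}(u, w_h)\bigr].
\]
The first bracket is bounded by $\varXi_2(v_h)\,\norm{w_h}_{B,h}$ directly from its defining supremum. For the second, the standard VEM device is to insert the polynomial $p$ twice and rewrite it as $B_{h,\frac12}(v_h - p, w_h) + [B_{h,\frac12}(p, w_h) - B_{\frac12}(p, w_h)] + B_{\frac12}(p-u, w_h)$. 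The boundedness \eqref{ineq:boundB} handles the first summand against the bound $\norm{v_h - p}_{B,h} \leq \varXi_1(v_h) + \norm{u-p}_{B,h}$; the middle summand is precisely the polynomial-consistency supremum inside $\varXi_3(p)$; and $B_{\frac12}(p-u, w_h)$ is controlled by $C\norm{u-p}_{B,h}\norm{w_h}_{B,h}$ via elementwise Cauchy--Schwarz, which in turn follows from the crude estimate $\|L_\lambda v\|_{0,K} \leq \sqrt{2}|v|_{2,K} + \lambda\|v\|_{0,K} \leq C\norm{v}_{B,h,K}$.

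Assembling all contributions, dividing by $\norm{w_h}_{B,h}$, and returning to the opening triangle inequality yields
\[
\norm{u-u_h}_{B,h} \leq C\bigl[\varXi_1(v_h) + \varXi_2(v_h) + \varXi_3(p)\bigr]
\]
uniformly in $v_h$ and $p$. Since the choices of $v_h$ and $p$ are independent, taking the infimum over each variable separately produces \eqref{ineq:errorbound}. The step I expect to require the most care is the bookkeeping on the polynomial insertion: the cross term $\norm{u-p}_{B,h}$ appears on both sides of the inserted polynomial and must be absorbed into $\varXi_3(p)$ rather than spawning an independent error term, which works out cleanly because $\norm{u-p}_{B,h}$ is precisely the first summand of $\varXi_3(p)$. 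A small additional point is to ensure that the elementwise boundedness of $B_{\frac12}$ holds without picking up interelement jump contributions, which is immediate since $B_{\frac12}^K$, $B_*^K$, and $(L_\lambda\cdot,L_\lambda\cdot)_K$ are all defined intrinsically on each polygon $K$. Everything else is a direct application of coercivity, boundedness of $B_{h,\frac12}$, and the structural form of $\varXi_2$ and $\varXi_3$.
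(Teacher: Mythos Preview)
Your proposal is correct and follows essentially the same route as the paper: apply the strong monotonicity \eqref{ineq:cov} together with $a_h(u_h;\cdot)=0$, then split $a_h(v_h;w_h)$ by inserting $\pm B_{\frac12}(u,w_h)$ and a polynomial $p$ to isolate the three contributions $\varXi_1,\varXi_2,\varXi_3$. The only cosmetic difference is the sign convention (you take $w_h=v_h-u_h$ whereas the paper takes $w_h=u_h-v_h$) and the grouping of the polynomial insertion, but the algebra and estimates are identical.
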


\begin{proof}
For any function $v_h\in V_{h,k}$, let  $w_h=u_h-v_h\in V_{h,k}$. Applying the inequality \eqref{ineq:cov},  we obtain
\begin{equation}\label{ineq:eb1}
\begin{split}
     \beta_\star\norm{u_h-v_h}_{B,h}^2&\leq a_h(u_h;w_h)-a_h(v_h;w_h)=-a_h(v_h;w_h)\\
     &=-\left(A_h(v_h;w_h)+B_{\frac12}(u,w_h)\right)-\left(B_{h,\frac12}(v_h,w_h)-B_{\frac12}(u,w_h)\right).
\end{split}
\end{equation}
For any $K\in\mathcal K_h$ and $p\in\mathbb P_k(\mathcal K_h)$, we have the decomposition
\begin{equation}\label{ineq:eb2}
\begin{split}
    &B_{h,\frac12}(v_h,w_h)-B_{\frac12}(u,w_h) \\
    =&\sum_{K\in\mathcal K_h}\Big(B_{h,\frac12}^K(v_h-p,w_h)-B_{\frac12}^K(u-p,w_h)\Big)+\sum_{K\in\mathcal K_h}\Big(B_{h,\frac12}^K(p,w_h)-B_{\frac12}^K(p,w_h)\Big).
\end{split}
\end{equation}
By the triangle inequality, the Cauchy-Schwarz inequality and \eqref{ineq:boundB}, we obtain
\begin{equation}\label{ineq:eb3}
  \sum_{K\in\mathcal K_h}\Big(B_{h,\frac12}^K(v_h-p,w_h)-B_{\frac12}^K(u-p,w_h)\Big)\leq C(\norm{u-v_h}_{B,h}+\norm{u-p}_{B,h})\norm{w_h}_{B,h}.
\end{equation}
The conclusion follows by using the triangle inequality and inequalities \eqref{ineq:eb1}-\eqref{ineq:eb3}.
\end{proof}

\section{The conforming Virtual element method}\label{sec:conforming}
In this section, we investigate the $C^1$-conforming virtual element method for solving the HJB equation and derive an optimal \textit{a priori} error estimate.

For an element or an edge $S$ in the mesh, and a given integer $m$, we denote $\mathscr M_m(S)$ as the set of scaled monomials
\[\mathscr M_m(S):=\left\{\left(\frac{\bm x-\bm x_S}{h_S}\right)^\beta:|\beta|\leq\ell\right\},\]
where $\bm x_S$ is the barycenter of $S$, $\beta=(\beta_1,\beta_2)$ is a nonnegative multi-index, $|\beta|=\beta_1+\beta_2$,  and $\bm x^\beta=x_1^{\beta_1}x_2^{\beta_2}$.
We also define the homogeneous scaled monomials set $\mathscr M_m^\circ(S)=\mathscr M_m(S)\setminus \mathscr M_{m-1}(S)$ with $\mathscr M_{-1}(S)=\emptyset$.

\subsection{Virtual element spaces and projection operators}
Let integer $k\geq 2$, we define $r=\max\{3,k\}$. For any element $K\in\mathcal K_h$,  following the ideas in \cite{LZWC2024,AABMR2013,CMS2017,BM2013,CM2016}, we  introduce the enhancement local $C^1$-conforming space:
\[\widetilde V_{h,k}^{K,\cc}=\left\{v\in H^2(K):\Delta^2v|_K\in \mathbb P_k(K), v|_e\in \mathbb P_r(e), \frac{\partial v}{\partial \bm n}\Big|_e\in\mathbb P_{k-1}(e)\,\forall e\in \partial K\cap\mathcal E_h\right\}.\]
The associated degrees of freedom are given by:
\begin{itemize}
  \item The value of $v(\xi)$, for any vertex $\xi$ in $K$;
  \item The values of $h_\xi\frac{\partial v(\xi)}{\partial x_1}$ and $h_\xi\frac{\partial v(\xi)}{\partial x_2}$, for any vertex $\xi$ in $K$;
  \item For $r\geq4$, the moments $\frac1{h_e}\int_eqv$, for any $q\in\mathscr M_{r-4}(e)$ and $e\subset\partial K$;
  \item For $k\geq 3$, the moments $\int_eq\frac{\partial v}{\partial\bm n}$, for any $q\in\mathscr M_{k-3}(e)$ and $e\subset\partial K$;
  \item For $k\geq 2$, the moments $\frac1{h_K^2}\int_Kqv$, for any $q\in \mathscr M_{k-2}(K)$.
\end{itemize}
Here, for each vertex $\xi$, $h_\xi$ represents the characteristic length of $\xi$; for example, $h_\xi$ can be taken as the average of the diameters of the elements sharing the $\xi$ as a vertex.
By the standard argument of the virtual element (cf. \cite{AABMR2013,BM2013}), it can be shown that the local space $\widetilde V_{h,k}^{K,\cc}$ is unisolvent.

For any function $\psi\in\widetilde V_{h,k}^{K,\cc}$ and $q\in\mathbb P_k(K)$, one can exactly compute $(D^2\phi,D^2q)_K$ by utilizing the degrees of freedom, as shown in  \cite{BM2013}.  Consequently, we can construct a projection operator
$\Pi_{k,\Hs}^{K,\cc}:\widetilde V_{h,k}^{K,\cc}\to\mathbb P_k(K)\subset \widetilde V_{h,k}^{K,\cc}$ as follows, for any $\psi\in \widetilde V_{h,k}^{K,\cc}$,
\[
\left\{
    \begin{array}{l}
       (D^2\Pi_{k,\Hs}^{K,\cc}\psi,D^2q)_K=(D^2\psi,D^2q)_K\qquad \forall q\in\mathbb P_k(K)\\
       \\
       \widehat{\Pi_{k,\Hs}^{K,\cc}\psi} = \widehat \psi,\quad\widehat{\nabla\Pi_{k,\Hs}^{K,\cc}\psi} = \widehat{\nabla \psi},
    \end{array}
\right.
\]
where the \textit{quasi-average} $\widehat\psi$ is determined by
\[\widehat\psi:=\frac1\ell\sum_{j=1}^\ell\psi(\xi^j),\] and $\xi^j$, $j=1,\cdots,\ell$, are vertices of $K$.

Now, we are in a position to restrict $\widetilde V_{h,k}^{K,\cc}$ to a subspace $V_{h,k}^{K,\cc}$. Specifically, we define
\[V_{h,k}^{K,\cc}=\left\{v\in \widetilde V_{h,k}^{K,\cc}:(\Pi_{k,\Hs}^{K,\cc} v-v,q^\circ)_K=0\quad\forall q^\circ\in\bigcup_{j=k-3}^k\mathscr M_j^\circ(K)\right\}.\]
A function in $V_{h,k}^{K,\cc}$ can be uniquely determined by the following degrees of freedom (cf. \cite{AABMR2013,LZWC2024}):
\begin{itemize}
  \item The value of $v(\xi)$, for any vertex $\xi$ in $K$;
  \item The values of $h_\xi\frac{\partial v(\xi)}{\partial x_1}$ and $h_\xi\frac{\partial v(\xi)}{\partial x_2}$,   for any vertex $\xi$ in $K$;
  \item For $r\geq 4$, the moments $\frac1{h_e}\int_eqv$, for any $q\in\mathscr M_{r-4}(e)$ and $e\subset\partial K$;
  \item For $k\geq 3$, the moments $\int_eq\frac{\partial v}{\partial\bm n}$, for any $q\in\mathscr M_{k-3}(e)$ and $e\subset\partial K$;
  \item For $k\geq 4$, the moments $\frac1{h_K^2}\int_Kqv$, for any $q\in \mathscr M_{k-4}(K)$.
\end{itemize}
In Figure \ref{fig:dofc},  we illustrate the above degrees of freedom in the case of $k=2$ and $k=3$ for the sake of intuition.
\begin{figure}
   \centering
  \includegraphics[width=0.8\textwidth]{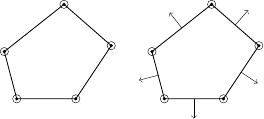}
  \caption{The degrees of freedom with $k=2$ (left) and $k=3$ (right)}
   \label{fig:dofc}
\end{figure}

The global virtual element space $V_{h,k}^\cc$ now is given by
\[V_{h,k}^\cc:=\left\{v\in H:v|_K\in V_{h,k}^{K,\cc}~~\forall K\in\mathcal K_h\right\}.\]

We now focus on the construction of the discrete nonlinear form and bilinear form for the conforming virtual element method. To this end, for an element $K \in \mathcal{K}_h$ and an integer $j$, we define the $L^2$ projection operator $\Pi_j^{K,\cc}$ onto the space $\mathbb{P}_j(K)$. Given a function $v$, the projection $\Pi_j^{K,\cc}v \in \mathbb{P}_j(K)$ satisfies
\[(\Pi_j^{K,\cc}v,q)_K=(v,q)_K\quad\forall q\in\mathbb P_j(K).\]
The definition of $\Pi_j^{K,\cc}$ can be generalized to vector or matrix-valued functions directly. We would like to comment that for any function $v\in V_{h,k}^{K,\cc}$, $\Pi_k^{K,\cc}v$, $\Pi_{k-1}^{K,\cc}\nabla v$, and
$\Pi_{k-2}^{K,\cc}D^2v$ are all computable through the degrees of freedom. In fact, $\Pi_k^{K,\cc} v$ is computable, which is evident from the definition of $V_{h,k}^{K,\cc}$ and the moments $\frac{1}{h_K^2}\int_K qv$, for any $q \in \mathscr{M}_{k-4}(K)$. In particular, if $k \leq 4$, we have $\Pi_k^{K,\cc}v = \Pi_{k,\Hs}^{K,\cc}v$.

On the other hand, for any $\Psi\in[\mathbb P_{k-2}(K)]^{2\times2}$ and $\bm w\in [\mathbb P_{k-1}(K)]^2$, using integration by parts twice yields:
\begin{equation}\label{eq:cp1}
  (D^2v,\Psi)_K=(\frac{\partial v}{\partial\bm n},\bm n^T\Psi\bm n)_{\partial K}+(\frac{\partial v}{\partial\bm t},\bm t^T\Psi\bm n)_{\partial K}-(v,\mathrm{div}\Psi\bm n)_{\partial K}+(v,\mathrm{div}\,\mathrm{div}\Psi)_K,
\end{equation}
and
\begin{equation}\label{eq:cp2}
  (\nabla v,\bm w)_K=(v,\bm w\cdot\bm n)_{\partial K}-(v,\mathrm{div}\bm w)_K=(v,\bm w\cdot\bm n)_{\partial K}-(\Pi_k^{K,\cc}v,\mathrm{div}\bm w)_K,
\end{equation}
which clarify that $\Pi_{k-2}^{K,\cc}D^2v$ and $\Pi_{k-1}^{K,\cc}\nabla v$ are computable though the degrees of freedom and $\Pi_k^{K,\cc}v$.

First, we define an approximation of $L^\alpha$ as:
\[\widehat L^{\alpha,\cc} v:=\A^\alpha:\Pi_{k-2}^{K,\cc}D^2v+\bm b^\alpha\cdot\Pi_{k-2}^{K,\cc}\nabla v-c^\alpha \Pi_{k-2}^{K,\cc}v.\]
Then we define
\[\widehat F_\gamma^\cc[v]:=\sup_{\alpha\in\varLambda}\gamma^\alpha(\widehat L^{\alpha,\cc} v-f^\alpha).\]
Similarly, $\widehat L_\lambda^\cc$ takes the form
\[\widehat L_\lambda^\cc v:=\tr\Pi_{k-2}^{K,\cc}D^2v-\lambda\Pi_{k-2}^{K,\cc}v.\]
We set
\[A_h^{K,\cc}(w_h;v_h):=(\widehat F_\gamma^\cc[w_h],\widehat L_\lambda^\cc v_h)_K-(\widehat L_\lambda^\cc w_h,\widehat L_\lambda^\cc v_h)_K+S^{K,\cc}(w_h-\Pi_{k,\Hs}^{K,\cc}w_h,v_h-\Pi_{k,\Hs}^{K,\cc}v_h),\]
where $S^{K,\cc}(\cdot,\cdot)$ is any symmetric and positive define bilinear form such that
\begin{equation}\label{ineq:Ss}
  c_\star\norm{v_h-\Pi_{k,\Hs}^{K,\cc}v_h}_{B,h,K}^2\leq S^{K,\cc}(v_h-\Pi_{k,\Hs}^{K,\cc}v_h,v_h-\Pi_{k,\Hs}^{K,\cc}v_h) \leq c^\star\norm{v_h-\Pi_{k,\Hs}^{K,\cc}v_h}_{B,h,K}^2,
\end{equation}
for any $v_h\in V_h^{K,\cc}$, and positive constants $c_\star$, $c^\star$ independent of $h$ and $K$. A typical choice for $S^{K,\cc}$ is based on the scalar product of the local degrees of freedom scaled with $\lambda$. More precisely,
we take
\[S^{K,\cc}(w_h,v_h):=(h_K^{-2}+2\lambda+\lambda^2h_K^2)\sum_{j=1}^{n_K^\cc}\dof_j^\cc(w_h)\cdot\dof_j^\cc(v_h).\]
Here $n_K^\cc$ represents the number of degrees of the freedom on element $K$, and $\dof_j^\cc$ denotes the $j$th degrees of freedom.

Next we define the bilinear forms $B_{h,*}^{K,\cc}(\cdot,\cdot)$ and $B_{h,\theta}^{K,\cc}(\cdot,\cdot)$ by
\begin{equation*}\begin{split}
B_{h,*}^{K,\cc}(w_h,v_h):=(\Pi_{k-2}^{K,\cc}D^2w_h,\Pi_{k-2}^{K,\cc}D^2v_h)_K+&2\lambda(\Pi_{k-2}^{K,\cc}\nabla w_h,\Pi_{k-2}^{K,\cc}\nabla v_h)_K\\
&+\lambda^2(\Pi_{k-2}^{K,\cc}w_h,\Pi_{k-2}^{K,\cc}v_h)_K,
\end{split}\end{equation*}
and
\[B_{h,\theta}^{K,\cc}(w_h,v_h)=\theta B_{h,*}^{K,\cc}(w_h,v_h)+(1-\theta)(\widehat L_\lambda^\cc w_h,\widehat L_\lambda^\cc v_h)_K,\quad \theta\in[0,1].\]

Finally, we define
\[a_h^{K,\cc}(w_h;v_h):=A_h^{K,\cc}(w_h;v_h)+B_{h,\frac12}^{K,\cc}(w_h,v_h).\]

\begin{remark}
The stabilization term $B_{h,*}^{K,\cc}(\cdot,\cdot)$ can be represented in some equivalent ways \cite[Lemma 5.1]{KS2021}.
\end{remark}



The $C^1$-conforming virtual element scheme for solving HJB equation is now formulated as: Find $u_h^\cc\in V_{h,k}^\cc$ such that:
\begin{equation}\label{eq:c1vem}
  a_h^\cc(u_h^\cc;v_h)=0\qquad\forall v_h\in V_{h,k}^\cc.
\end{equation}

\subsection{Stability analysis and error estimate}
In this subsection, we aim to discuss the well-posedness of our numerical scheme \eqref{eq:c1vem} and provide a detailed optimal \textit{a priori} estimate. To begin, we first present the following lemma associated with the Cordes condition.
\begin{lemma}\label{lem:cordes}
For any open set $U$, and $w,v\in H^2(U)$, if the Cordes condition in Definition \ref{def:cordes} holds, then the following inequality is true:
\begin{equation}\label{ineq:co}
  |\widehat F_\gamma^\cc[w]-\widehat F_\gamma^\cc[v]-\widehat L_\lambda^\cc(w-v)|\leq\sqrt{1-\varepsilon}\sqrt{|\Pi_{k-2}^{K,\cc}D^2z|^2+2\lambda|\Pi_{k-2}^{K,\cc}\nabla z|^2+\lambda^2|\Pi_{k-2}^{K,\cc}z|^2},
\end{equation}
with $z=w-v$.
\end{lemma}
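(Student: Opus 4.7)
The plan is to reduce the bound to a pointwise estimate for each fixed $\alpha\in\varLambda$ and then apply a weighted Cauchy--Schwarz inequality whose weights are tuned to the Cordes quantity. The crucial observation is that the projection operators $\Pi_{k-2}^{K,\cc}$ are linear and the operator $\widehat L_\lambda^\cc$ is independent of $\alpha$, so the argument will parallel the classical Smears--S\"uli Cordes lemma, with the projected derivatives of $z=w-v$ simply replacing $D^2z$, $\nabla z$, $z$.

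First I would subtract the $\alpha$-independent quantity $\widehat L_\lambda^\cc w$ (respectively $\widehat L_\lambda^\cc v$) inside each supremum defining $\widehat F_\gamma^\cc[w]$ (respectively $\widehat F_\gamma^\cc[v]$), and then use $|\sup_\alpha X(\alpha)-\sup_\alpha Y(\alpha)|\leq\sup_\alpha|X(\alpha)-Y(\alpha)|$ together with the cancellation of the $f^\alpha$ terms. This reduces matters to showing, uniformly in $\alpha$, that
$$|\gamma^\alpha\widehat L^{\alpha,\cc}z-\widehat L_\lambda^\cc z|\leq\sqrt{1-\varepsilon}\sqrt{|\Pi_{k-2}^{K,\cc}D^2z|^2+2\lambda|\Pi_{k-2}^{K,\cc}\nabla z|^2+\lambda^2|\Pi_{k-2}^{K,\cc}z|^2}.$$
Writing $M:=\Pi_{k-2}^{K,\cc}D^2z$, $G:=\Pi_{k-2}^{K,\cc}\nabla z$, and $P:=\Pi_{k-2}^{K,\cc}z$, the identity
$$\gamma^\alpha\widehat L^{\alpha,\cc}z-\widehat L_\lambda^\cc z=(\gamma^\alpha\A^\alpha-I):M+\gamma^\alpha\bm b^\alpha\cdot G+(\lambda-\gamma^\alpha c^\alpha)P$$
is immediate from the definitions and the linearity of $\Pi_{k-2}^{K,\cc}$. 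Pairing the three contributions against $(M,\sqrt{2\lambda}\,G,\lambda P)$ by Cauchy--Schwarz produces the target factor $\sqrt{|M|^2+2\lambda|G|^2+\lambda^2P^2}$, with the coefficient factor $\sqrt{|\gamma^\alpha\A^\alpha-I|^2+|\gamma^\alpha\bm b^\alpha|^2/(2\lambda)+(\lambda-\gamma^\alpha c^\alpha)^2/\lambda^2}$.

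The remaining algebraic step, which I expect to be the main technical obstacle, is to show that this coefficient factor is at most $\sqrt{1-\varepsilon}$, and it is here that the Cordes condition is used. Expanding the squared coefficient norm and using $|I|^2=2$ in two dimensions yields
$$(\gamma^\alpha)^2\left[|\A^\alpha|^2+\frac{|\bm b^\alpha|^2}{2\lambda}+\left(\frac{c^\alpha}{\lambda}\right)^2\right]-2\gamma^\alpha\left[\tr\A^\alpha+\frac{c^\alpha}{\lambda}\right]+3.$$
Substituting the definition of $\gamma^\alpha$ from the first branch of Definition \ref{def:cordes} collapses the first two terms into $-(\tr\A^\alpha+c^\alpha/\lambda)^2/[\,|\A^\alpha|^2+|\bm b^\alpha|^2/(2\lambda)+(c^\alpha/\lambda)^2\,]$, which by \eqref{eq:Cordes1} is bounded above by $-(2+\varepsilon)$; thus the expression is at most $1-\varepsilon$. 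The degenerate branch $\bm b^\alpha\equiv\bm0$, $c^\alpha\equiv0$ is treated identically but involves only the $M$-contribution, with the constant $|I|^2=2$ in place of $3$, and the bound $(\tr\A^\alpha)^2/|\A^\alpha|^2\geq1+\varepsilon$ again yields $1-\varepsilon$. The one delicate point is selecting the Cauchy--Schwarz weights $(1,1/\sqrt{2\lambda},1/\lambda)$ on the coefficient side; without this precise calibration the resulting coefficient norm does not line up with the Cordes quantity and the constant $1-\varepsilon$ cannot be recovered.
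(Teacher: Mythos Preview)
Your proposal is correct and follows essentially the same route as the paper's proof: reduce via $|\sup_\alpha X-\sup_\alpha Y|\leq\sup_\alpha|X-Y|$, apply the weighted Cauchy--Schwarz inequality, and bound the coefficient factor by $\sqrt{1-\varepsilon}$ using the Cordes condition. In fact you supply more detail than the paper, which simply asserts $C^\alpha\leq1-\varepsilon$ without carrying out the algebraic expansion you provide.
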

\begin{proof}
  The proof included below mainly follows \cite{SS2014,Wu2021}, with specific details provided for completeness. We start by applying the Cauchy-Schwarz inequality to obtain:
  \begin{equation}\label{ineq:co1}
    \begin{split}
       &|\widehat F_\gamma^\cc[w]-\widehat F_\gamma^\cc[v]-\widehat L_\lambda^\cc(w-v)| \\
       =~&|\sup_{\alpha\in\varLambda}[\gamma^\alpha(\widehat L^{\alpha,\cc}w-f^\alpha)-\widehat L_\lambda^\cc w]-\sup_{\alpha\in\varLambda}[\gamma^\alpha(\widehat L^{\alpha,\cc}v-f^\alpha)-\widehat L_\lambda^\cc v]| \\
    \leq~&\sup_{\alpha\in\varLambda}|\gamma^\alpha\widehat L^{\alpha,\cc}z-\widehat L_\lambda^\cc z| \\
    \leq~&\sup_{\alpha\in\varLambda}\Big(|\gamma^\alpha\A^\alpha-I||\Pi_{k-2}^{K,\cc}D^2z|+|\gamma^\alpha||\bm b^\alpha||\Pi_{k-2}^{K,\cc}\nabla z|+|\lambda-\gamma^\alpha c^\alpha||\Pi_{k-2}^{K,\cc}z|\Big) \\
    \leq~&\left(\sup_{\alpha\in\varLambda}\sqrt{C^\alpha}\right)\sqrt{|\Pi_{k-2}^{K,\cc}D^2z|^2+2\lambda|\Pi_{k-2}^{K,\cc}\nabla z|^2+\lambda^2|\Pi_{k-2}^{K,\cc}z|^2},
    \end{split}
  \end{equation}
  where $C^\alpha$ has the expression:
  \[C^\alpha:=|\gamma^\alpha\A^\alpha-I|^2+\frac{|\gamma^\alpha|^2|\bm b^\alpha|^2}{2\lambda}+\frac{|\lambda-\gamma^\alpha c^\alpha|^2}{\lambda^2}.\]
  According to the Cordes condition and the definition of $\gamma^\alpha$, we have $C^\alpha \leq 1 - \varepsilon$. Thus, the proof is completed.
\end{proof}

To simplify the notation, we define
\[|\cdot|_{B,h,\Pi,K}:=\Big(|\Pi_{k-2}^{K,\cc}D^2\cdot|_{0,K}^2+2\lambda|\Pi_{k-2}^{K,\cc}\nabla \cdot|_{0,K}^2+\lambda^2|\Pi_{k-2}^{K,\cc}\cdot|_{0,K}^2\Big)^{\frac12}\]and
\[|\cdot|_{B,h,\Pi}:=\left(\sum_{K\in\mathcal K_h}|\cdot|_{B,h,\Pi,K}^2\right)^{\frac12}.\]

It is obvious that \eqref{ineq:boundB} holds for $B_{h,\theta}^{K,\cc}$,
 and we shall verify \eqref{ineq:cov} and \eqref{ineq:Lip}.
\begin{lemma}\label{lem:e&uc}
Inequalities \eqref{ineq:cov} and \eqref{ineq:Lip} hold for $a_h^\cc$, and then the problem \eqref{eq:c1vem} admits a unique solution.
\end{lemma}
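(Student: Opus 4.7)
The plan is to verify the strong monotonicity \eqref{ineq:cov} and Lipschitz continuity \eqref{ineq:Lip} for $a_h^\cc$; existence and uniqueness of the discrete solution then follow directly from Lemma \ref{lem:e&u}. Setting $z_h := u_h - v_h$ and working elementwise, the key algebraic identity $(Y,X)_K - \|X\|_{0,K}^2 = (Y-X,X)_K$ with $X = \widehat L_\lambda^\cc z_h$ and $Y = \widehat F_\gamma^\cc[u_h] - \widehat F_\gamma^\cc[v_h]$ recasts $a_h^{K,\cc}(u_h;z_h) - a_h^{K,\cc}(v_h;z_h)$ as
\[
(\widehat F_\gamma^\cc[u_h] - \widehat F_\gamma^\cc[v_h] - \widehat L_\lambda^\cc z_h,\; \widehat L_\lambda^\cc z_h)_K + S^{K,\cc}(z_h - \Pi_{k,\Hs}^{K,\cc} z_h,\; z_h - \Pi_{k,\Hs}^{K,\cc} z_h) + B_{h,\frac{1}{2}}^{K,\cc}(z_h,z_h).
\]
Lemma \ref{lem:cordes} controls the first term from below by $-\sqrt{1-\varepsilon}\,|z_h|_{B,h,\Pi,K}\|\widehat L_\lambda^\cc z_h\|_{0,K}$. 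Since $B_{h,\frac{1}{2}}^{K,\cc}(z_h,z_h) = \tfrac12 |z_h|_{B,h,\Pi,K}^2 + \tfrac12 \|\widehat L_\lambda^\cc z_h\|_{0,K}^2$, Young's inequality yields the elementwise lower bound
\[
\frac{1-\sqrt{1-\varepsilon}}{2}\bigl(|z_h|_{B,h,\Pi,K}^2 + \|\widehat L_\lambda^\cc z_h\|_{0,K}^2\bigr) + S^{K,\cc}(z_h - \Pi_{k,\Hs}^{K,\cc} z_h,\; z_h - \Pi_{k,\Hs}^{K,\cc} z_h).
\]

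To convert this into coercivity in $\norm{\cdot}_{B,h}$, the decomposition $z_h = (z_h - \Pi_{k,\Hs}^{K,\cc} z_h) + \Pi_{k,\Hs}^{K,\cc} z_h$ will be used. The lower bound in \eqref{ineq:Ss} controls $\norm{z_h - \Pi_{k,\Hs}^{K,\cc} z_h}_{B,h,K}^2$ through the stabilization, while for the polynomial part $q := \Pi_{k,\Hs}^{K,\cc} z_h \in \mathbb P_k(K)$ the inclusion $D^2 q \in \mathbb P_{k-2}(K)$ already matches $|q|_{2,K}$ to its contribution inside $|q|_{B,h,\Pi,K}$. Summing over $K$ and exploiting the inclusion $V_{h,k}^\cc \subset H$—which gives access to the global Poincaré–Friedrichs and Miranda–Talenti inequalities to absorb the remaining $\lambda$-weighted lower-order terms into $|z_h|_{2,\varOmega}^2$—then delivers \eqref{ineq:cov} with a positive constant $\beta_\star$ depending only on $\varepsilon$, $c_\star$, $\lambda$, and the regularity constant $\rho$.

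The Lipschitz bound \eqref{ineq:Lip} follows from a parallel expansion. Lemma \ref{lem:cordes} applied to the difference yields $\|\widehat F_\gamma^\cc[u_h] - \widehat F_\gamma^\cc[v_h]\|_{0,K} \leq \|\widehat L_\lambda^\cc(u_h - v_h)\|_{0,K} + \sqrt{1-\varepsilon}\,|u_h - v_h|_{B,h,\Pi,K}$, after which each contribution to $a_h^{K,\cc}(u_h;w_h) - a_h^{K,\cc}(v_h;w_h)$ is estimated by the Cauchy--Schwarz inequality together with the assumed boundedness \eqref{ineq:boundB} of $B_{h,\frac{1}{2}}^{K,\cc}$ and the upper bound in \eqref{ineq:Ss} for $S^{K,\cc}$; a final Cauchy--Schwarz over $K \in \mathcal K_h$ concludes \eqref{ineq:Lip}, and Lemma \ref{lem:e&u} then produces the unique discrete solution.

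The hard part will be the coercivity step, specifically bridging the projected semi-norm $|\cdot|_{B,h,\Pi}$ together with the stabilization back to the full $\norm{\cdot}_{B,h}$ norm. Because $\Pi_{k-2}^{K,\cc}$ is applied to $\nabla z_h$ and $z_h$, rather than to objects living exactly in $\mathbb P_{k-2}(K)$, some top-order content of the polynomial component $\Pi_{k,\Hs}^{K,\cc} z_h$ is not directly visible in $|q|_{B,h,\Pi,K}$; recovering it via the $H^1_0$ structure of $V_{h,k}^\cc$ combined with the $\lambda$-weighting—so that $\beta_\star$ is independent of $h$—is the delicate accounting that must be carried out with care.
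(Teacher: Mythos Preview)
Your decomposition, use of Lemma~\ref{lem:cordes}, and treatment of the Lipschitz bound follow the paper's proof essentially verbatim; the only substantive difference is how you close the coercivity step. After arriving at a lower bound of the form
\[
\tfrac{\varepsilon}{2}\,|z_h|_{B,h,\Pi}^2 + c_\star\sum_{K}\norm{z_h-\Pi_{k,\Hs}^{K,\cc}z_h}_{B,h,K}^2
\]
(the paper's version; yours has $\tfrac{1-\sqrt{1-\varepsilon}}{2}$ and keeps an extra nonnegative $\|\widehat L_\lambda^\cc z_h\|_{0,K}^2$ term, which is harmless), the paper simply asserts that this dominates $C\norm{z_h}_{B,h}^2$. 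The implicit mechanism there is purely local: for $q=\Pi_{k,\Hs}^{K,\cc}z_h\in\mathbb P_k(K)$ one has $D^2q\in\mathbb P_{k-2}(K)$ exactly, and the local Poincar\'e estimates $\|\nabla q-\Pi_{k-2}^{K,\cc}\nabla q\|_{0,K}\lesssim h_K|D^2q|_{0,K}$ and $\|q-\Pi_{k-2}^{K,\cc}q\|_{0,K}\lesssim h_K\|\nabla q\|_{0,K}$ recover $\norm{q}_{B,h,K}\leq C|q|_{B,h,\Pi,K}$ with a constant bounded for $h$ small and fixed $\lambda$; the triangle inequality then yields the elementwise norm equivalence.

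Your route---using $V_{h,k}^\cc\subset H$ so that the global Poincar\'e chain $\|z_h\|_{0,\varOmega}\lesssim|z_h|_{1,\varOmega}\lesssim|z_h|_{2,\varOmega}$ reduces $\norm{z_h}_{B,h}$ to $|z_h|_{2,\varOmega}$, which is then controlled by the $\Pi_{k-2}^{K,\cc}D^2$ part plus the stabilization---is correct and arguably cleaner here. (Miranda--Talenti is not needed; the Poincar\'e--Friedrichs inequality alone suffices, so you can drop that reference.) The trade-off is that your argument genuinely uses the conforming inclusion $V_{h,k}^\cc\subset H$, whereas the paper's local argument carries over unchanged to the $C^0$-nonconforming space $V_{h,k}^\nc\not\subset H$, which is why the paper can later invoke ``an argument analogous to Lemma~\ref{lem:e&uc}'' for the nonconforming well-posedness without further work.
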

\begin{proof}
For any $u_h, v_h\in V_{h,k}^\cc$, we obtain by the definition of $a_h^\cc(\cdot,\cdot)$ that
\begin{equation}\label{ineq:e&uc1}
\begin{split}
  & a_h^\cc(u_h;u_h-v_h)-a_h^\cc(v_h;u_h-v_h) \\
 =~& \sum_{K\in\mathcal K_h}\Big(A_h^{K,\cc}(u_h;u_h-v_h)-A_h^{K,\cc}(v_h;u_h-v_h)\Big)
 + \sum_{K\in\mathcal K_h}B_{h,\frac12}^{K,\cc}(u_h-v_h,u_h-v_h)\\
 :=~&J_1+J_2
\end{split}
\end{equation}
To estimate $J_1$, according to the definition of $A_h^{K,\cc}(\cdot,\cdot)$, we first consider
\[(\widehat F_\gamma^\cc[u_h]-\widehat L_\lambda^\cc u_h,\widehat L_\lambda^\cc(u_h-v_h))_K-(\widehat F_\gamma^\cc[v_h]-\widehat L_\lambda^\cc v_h,\widehat L_\lambda^\cc(u_h-v_h))_K=:\varTheta_K^\cc.\]
We estimate $\varTheta_K^\cc$ by using Lemma \ref{lem:cordes} and the Cauchy-Schwarz inequality:
\begin{equation}\label{ineq:e&uc2}
\begin{split}
  \varTheta_K^\cc&=(\widehat F_\gamma^\cc[u_h]-\widehat F_\gamma^\cc[v_h]-\widehat L_\lambda^\cc(u_h-v_h),\widehat L_\lambda^\cc(u_h-v_h))_K\\
  &\geq -\sqrt{1-\varepsilon}|u_h-v_h|_{B,h,\Pi,K}\|\widehat L_\lambda^\cc(u_h-v_h)\|_{0,K}.
  \end{split}
\end{equation}
Summing all elements and applying Young's inequality,  we have
\begin{equation}\label{ineq:e&uc3}
 \sum_{K\in\mathcal K_h}\varTheta_K^\cc\geq-\frac{1-\varepsilon}2|u_h-v_h|_{B,h,\Pi}^2-\frac12\sum_{K\in\mathcal K_h}\|\widehat L_\lambda^\cc(u_h-v_h)\|_{0,K}^2.
\end{equation}
On the other hand, by \eqref{ineq:Ss}, we get
\begin{equation}\label{ineq:e&uc4}
\begin{split}
&S^{K,\cc}(u_h-v_h-\Pi_{k,\Hs}^{K,\cc}(u_h-v_h),u_h-v_h-\Pi_{k,\Hs}^{K,\cc}(u_h-v_h))\\ \geq~& c_\star\norm{u_h-v_h-\Pi_{k,\Hs}^{K,\cc}(u_h-v_h)}_{B,h,K}^2.
\end{split}
\end{equation}
Consequently,  considering \eqref{ineq:e&uc3}, \eqref{ineq:e&uc4}, \eqref{ineq:Ss} and the definition of $J_2$, we obtain   
\[J_1+J_2\geq\frac\varepsilon2|u_h-v_h|_{B,h,\Pi}^2+c_\star\norm{u_h-v_h-\Pi_{k,\Hs}^{K,\cc}(u_h-v_h)}_{B,h}^2\geq C\norm{u_h-v_h}_{B,h}^2.\]

Next, we consider the Lipschitz continuity of $a_h^\cc$. For any $u_h, v_h, w_h\in V_{h,k}^\cc$, we find that
\begin{equation}\label{ineq:e&uc5}
\begin{split}
&a_h^\cc(u_h;w_h)-a_h^\cc(v_h;w_h)\\
=~&\sum_{K\in\mathcal K_h}(\widehat F_\gamma^\cc[u_h]-\widehat F_\gamma^\cc[v_h]-\widehat L_\lambda^\cc(u_h-v_h),\widehat L_\lambda^\cc w_h)_K\\
&+\sum_{K\in\mathcal K_h}S^{K,\cc}(u_h-\Pi_{k,\Hs}^{K,\cc}u_h-(v_h-\Pi_{k,\Hs}^{K,\cc}v_h),w_h-\Pi_{k,\Hs}^{K,\cc}w_h)+B_{h,\frac12}^\cc(u_h-v_h,w_h)\\
:=~&R_1+R_2+R_3.
\end{split}
\end{equation}
We derive by Lemma \ref{lem:cordes} and the Cauchy-Schwarz inequality that
\begin{equation}\label{ineq:e&uc6}
|R_1|\leq C\norm{u_h-v_h}_{B,h}\norm{w_h}_{B,h}.
\end{equation}
Using \eqref{ineq:Ss}, the Cauchy-Schwarz inequality and the definition of $\Pi_{k,\Hs}^{K,\cc}$, we have
\begin{equation}\label{ineq:e&uc7}
|R_2|\leq C\norm{u_h-v_h}_{B,h}\norm{w_h}_{B,h}.
\end{equation}
For $R_3$, applying the Cauchy-Schwarz inequality leads to
\begin{equation}\label{ineq:e&uc8}
|R_3|\leq C\norm{u_h-v_h}_{B,h}\norm{w_h}_{B,h}.
\end{equation}
Combing \eqref{ineq:e&uc6}-\eqref{ineq:e&uc8} with \eqref{ineq:e&uc5}, we conclude that $a_h^\cc$ is Lipschitz continuous.  Thus, by Lemma \ref{lem:e&u}, the problem \eqref{eq:c1vem} admits a unique solution.
\end{proof}

To establish an upper bound estimate of the virtual element error,
we define the canonical interpolation  $v_I^\cc|_K\in V_{h,k}^{K,\cc}$ for a smooth function $v$ and any element $K \in \mathcal{K}_h$ by
\[\dof_j^\cc(v-v_I^\cc)=0,\quad j=1,\cdots, n_K^\cc.\]
Under the mesh assumptions in Assumption \ref{ams:mesh}, $v_I^\cc$ satisfies the following approximation property, with details found in \cite{BBCMMR2013,BM2013,AABMR2013,LZWC2024} and the references therein:
 \begin{lemma}\label{lem:appc}
For $m=0,1,2$, $2< s\leq k+1$ and $v\in H^s(K)$, there holds that
\begin{equation}\label{ineq:appc}
  \|v-v_I^\cc\|_{m,K}\leq Ch^{s-m}\|v\|_{s,K}.
\end{equation}
\end{lemma}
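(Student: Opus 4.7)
The plan is to combine classical polynomial approximation with the polynomial preservation of the canonical interpolation operator. Using the star-shaped property of $K$ from Assumption \ref{ams:mesh}(i), the Bramble--Hilbert lemma applied via the averaged Taylor polynomial produces a $q \in \mathbb P_k(K)$ satisfying
$$|v - q|_{m,K} \leq C h_K^{s-m}|v|_{s,K}, \qquad 0 \leq m \leq s.$$
Since $\mathbb P_k(K) \subset V_{h,k}^{K,\cc}$ and $v_I^\cc$ is defined by matching every degree of freedom, the interpolation preserves polynomials of degree $\leq k$, so $q_I^\cc = q$. By linearity, $v - v_I^\cc = (v - q) - (v - q)_I^\cc$, and the triangle inequality reduces the task to bounding $\|(v - q)_I^\cc\|_{m,K}$.

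The core of the argument is a stability estimate for the canonical interpolation. The hypothesis $s > 2$ is essential here: the Sobolev embedding $H^s(K) \hookrightarrow C^1(\bar K)$ ensures that the vertex-value and vertex-gradient degrees of freedom are meaningful and that each $\dof_j^\cc(\phi)$ is controlled, with an appropriate $h_K$-scaling, by $\|\phi\|_{s,K}$. The edge moments and the interior moments against $\mathscr M_{r-4}(e)$, $\mathscr M_{k-3}(e)$ and $\mathscr M_{k-4}(K)$ are in turn bounded via a multiplicative trace inequality and the Cauchy--Schwarz inequality, together with the scaling properties of the scaled monomials. Combining these bounds with a degree-of-freedom based norm equivalence on the virtual space, whose proof uses the star-shaped assumption and the edge-length bound $h_e \geq \rho h_K$ from Assumption \ref{ams:mesh}, gives an inequality of the form
$$\|\phi_I^\cc\|_{m,K} \leq C \sum_{j} h_K^{\sigma_j - m}|\dof_j^\cc(\phi)|,$$
which is precisely the type of stability estimate established for the $C^1$-conforming plate-bending space in \cite{BM2013} and sharpened in \cite{AABMR2013,LZWC2024}.

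Applying this stability bound with $\phi = v - q$ and controlling each degree of freedom of $v - q$ through the Bramble--Hilbert estimates of the first step yields $\|(v - q)_I^\cc\|_{m,K} \leq C h_K^{s-m}|v|_{s,K}$. The conclusion then follows immediately from the triangle inequality
$$\|v - v_I^\cc\|_{m,K} \leq \|v - q\|_{m,K} + \|(v - q)_I^\cc\|_{m,K} \leq C h^{s-m}\|v\|_{s,K}.$$
The main obstacle is the stability estimate: the degrees of freedom are of mixed nature (pointwise evaluations, edge moments, and interior moments), so carefully tracking the correct powers of $h_K$ and invoking a reference configuration argument on a sub-triangulation of the star-shaped element $K$ is where the bulk of the technical work lies. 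Once this is done, the remainder of the argument is a routine triangle-inequality assembly.
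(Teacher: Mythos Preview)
The paper does not actually prove this lemma: it is stated as a known result, with the proof deferred entirely to the references \cite{BBCMMR2013,BM2013,AABMR2013,LZWC2024}. Your outline---Bramble--Hilbert to produce $q\in\mathbb P_k(K)$, polynomial preservation $q_I^\cc=q$, a scaled stability bound for the interpolation operator via the degrees of freedom, and assembly by the triangle inequality---is exactly the standard argument carried out in those references, so your proposal is consistent with what the paper invokes.
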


From the abstract error estimate established in Lemma \ref{lem:errorbound}, we obtain
\begin{equation}\label{ineq:errC}
  \norm{u-u_h^\cc}_{B,h}\leq C\left[\inf_{v_h\in V_{k,h}}\Big(\varXi_1^\cc(v_h)+\varXi_2^\cc(v_h)\Big)+\inf_{p\in\mathbb P_k(\mathcal K_h)}\varXi_3^\cc(p)\right],
\end{equation}
with
\[\varXi_1^\cc(v_h):=\norm{u-v_h}_{B,h},\]
the consistency error term $\varXi_2(v_h)$ is given by
\[\varXi_2^\cc(v_h):=\sup_{w_h\in V_{h,k}^\cc\setminus\{0\}}\frac{|A_h^\cc(v_h;w_h)+B_{\frac12}(u,w_h)|}{\norm{w_h}_{B,h}},\]
and the polynomial consistency term
\[
  \varXi_3^\cc(p):=\norm{u-p}_{B,h}+\sup_{w_h\in V_{h,k}^\cc\setminus\{0\}}\frac1{\norm{w_h}_{B,h}}\sum_{K\in\mathcal K_h}|B_{\frac12}^K(p,w_h)-B_{h,\frac12}^{K,\cc}(p,w_h)|.
  \]
Next we will estimate $\varXi_j^\cc$, $j=1,2,3$, respectively, by taking $v_h=u_I^\cc$. For $\varXi_1^\cc(u_I^\cc)$, we have the following bound.
\begin{lemma}\label{lem:errC1}
If the true solution $u\in H_0^1(\varOmega)\cap H^s(\varOmega)$, $s>2$, then we have
\begin{equation}\label{ineq:errC1}
  \varXi_1^\cc(u_I^\cc)\leq Ch^{\sigma_{k,s}}\|u\|_{s,\varOmega},
\end{equation}
where here and in what follows, $\sigma_{k,s}:=\min\{k-1,s-2\}$.
\end{lemma}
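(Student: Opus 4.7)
The plan is to unfold the definition of $\varXi_1^\cc$ and reduce the estimate to a direct element-by-element application of the interpolation bound in Lemma \ref{lem:appc}. By definition, $\varXi_1^\cc(u_I^\cc) = \norm{u-u_I^\cc}_{B,h}$, and since $u-u_I^\cc \in H_k^{2,\nc}(\mathcal K_h)$ (in fact, even $H^2$-regular elementwise), I can expand
\[
\norm{u-u_I^\cc}_{B,h}^2 = \sum_{K\in\mathcal K_h}\Big(|u-u_I^\cc|_{2,K}^2 + 2\lambda|u-u_I^\cc|_{1,K}^2 + \lambda^2\|u-u_I^\cc\|_{0,K}^2\Big).
\]

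Next, I would apply Lemma \ref{lem:appc} separately for $m=0,1,2$ with the effective regularity index $s' := \min\{s,k+1\}$; this accommodates the two regimes in $\sigma_{k,s}$ in a uniform way, since the lemma requires $2<s'\leq k+1$, and if $s>k+1$ we simply bound $\|u\|_{s',K}\leq \|u\|_{s,K}$. This yields
\[
|u-u_I^\cc|_{m,K}\leq Ch_K^{s'-m}\|u\|_{s,K},\qquad m=0,1,2.
\]
Plugging these in elementwise, the contribution of the three terms scales as $h_K^{2(s'-2)}$, $2\lambda\, h_K^{2(s'-1)}$, and $\lambda^2 h_K^{2s'}$ respectively. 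Because $h_K\leq h$ and since $h$ is uniformly bounded on the computational domain, the $m=2$ term dominates and produces the factor $h^{2(s'-2)} = h^{2\sigma_{k,s}}$; the lower-order terms contribute additional powers of $h$ that are absorbed into the constant.

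Summing over $K\in\mathcal K_h$ and using the finite overlap (in fact, disjointness) of the elements, I obtain
\[
\sum_{K\in\mathcal K_h}\|u\|_{s,K}^2 = \|u\|_{s,\varOmega}^2,
\]
so that taking the square root yields $\varXi_1^\cc(u_I^\cc) \leq Ch^{\sigma_{k,s}}\|u\|_{s,\varOmega}$ with a constant $C$ depending on $\lambda$, $\rho$, and the shape regularity constant of the mesh, but independent of $h$. There is essentially no obstacle: the argument is a textbook combination of the existing interpolation lemma with the weighted $\norm{\cdot}_{B,h}$ norm, and the only point requiring a brief comment is the passage to the truncated smoothness index $s'=\min\{s,k+1\}$, which is precisely what produces the $\min$ in $\sigma_{k,s}$.
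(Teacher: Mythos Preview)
Your proposal is correct and follows exactly the approach of the paper, which simply states that the result is a direct consequence of Lemma~\ref{lem:appc} and the definition of $\norm{\cdot}_{B,h}$. Your write-up merely spells out the elementwise application of the interpolation estimate and the truncation $s'=\min\{s,k+1\}$ that underlies the definition of $\sigma_{k,s}$.
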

\begin{proof}
  The result is a direct consequence of Lemma \ref{lem:appc} and the definition of $\norm{\cdot}_{B,h}$.
\end{proof}

We now estimate $\varXi_2^\cc(u_I^\cc)$ and we first establish the following consistency Lemma.
\begin{lemma}\label{lem:cst}
Suppose $w\in H_0^1(\varOmega)\cap H^s(\varOmega)$ with $s>\frac52$. Then for any $v\in  H_k^{2,\nc}(\mathcal K_h)$ we have
\begin{equation}\label{oneq:cst}
  B_*(w,v)=\sum_{K\in\mathcal K_h}(L_\lambda w,L_\lambda v)_K+\mathscr N(w,v)
\end{equation}
where
\[\mathscr N(w,v):=\sum_{e\in\mathcal E_h^\circ}(\frac{\partial^2w}{\partial\bm t^2}+\lambda w,\jp{\frac{\partial v}{\partial\bm n}})_e.\]
In particular, we have $\mathscr N(w,v)=0$ for any $v\in V_{h,k}^\cc$.
\end{lemma}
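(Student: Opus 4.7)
The proof proceeds in three steps: an elementwise algebraic reduction, a Miranda--Talenti-type boundary identity, and an assembly across interior edges. For a fixed element $K$, I first expand $(L_\lambda w, L_\lambda v)_K = (\Delta w, \Delta v)_K - \lambda(\Delta w, v)_K - \lambda(w, \Delta v)_K + \lambda^2(w,v)_K$ and apply Green's identity symmetrically to the gradient cross term, writing $2\lambda(\nabla w, \nabla v)_K = -\lambda(\Delta w, v)_K - \lambda(w, \Delta v)_K + \lambda(\partial_{\bm n}w, v)_{\partial K} + \lambda(w, \partial_{\bm n}v)_{\partial K}$. The $\lambda^2$ and cross-Laplacian terms cancel, reducing the difference to
\[B_*^K(w,v) - (L_\lambda w, L_\lambda v)_K = [(D^2w, D^2v)_K - (\Delta w, \Delta v)_K] + \lambda(\partial_{\bm n}w, v)_{\partial K} + \lambda(w, \partial_{\bm n}v)_{\partial K}.\]

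Next I would invoke the Miranda--Talenti-type identity
\[(D^2w, D^2v)_K - (\Delta w, \Delta v)_K = \int_{\partial K}\left[\frac{\partial^2 w}{\partial \bm n \partial \bm t}\frac{\partial v}{\partial \bm t} - \frac{\partial^2 w}{\partial \bm t^2}\frac{\partial v}{\partial \bm n}\right]ds,\]
obtained from two applications of the divergence theorem. This is established first for $w \in C^\infty(\overline{\varOmega})$ and then extended by density to $w \in H^s(\varOmega)$ with $s > 5/2$; this regularity threshold is precisely what guarantees that the edge traces $\partial^2 w/\partial \bm n\partial \bm t$ and $\partial^2 w/\partial \bm t^2$ lie in $L^2(e)$, so that the pairings with $\partial v/\partial \bm t, \partial v/\partial \bm n \in H^{1/2}(e)$ are well defined. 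At this point the entire discrepancy $B_*^K(w,v) - (L_\lambda w, L_\lambda v)_K$ is expressed as a sum of boundary integrals on $\partial K$.

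The final step is to sum over $K \in \mathcal K_h$ and regroup by edges. On an interior edge $e = K^l \cap K^r$, the outer normals and tangents of the two adjacent elements have opposite signs, the hypothesis $w \in H^s(\varOmega)$ makes $\partial^2 w/\partial \bm n\partial \bm t$, $\partial^2 w/\partial \bm t^2$, and $\partial_{\bm n}w$ single-valued on $e$, and $v \in H_0^1(\varOmega) \cap H^2(\mathcal K_h)$ is continuous across $e$ (so $v$ and $\partial v/\partial \bm t$ carry no jumps, whereas $\partial v/\partial \bm n$ may). A careful sign accounting shows that the first Miranda--Talenti pairing and the term $\lambda(\partial_{\bm n}w, v)_{\partial K}$ each cancel between the two adjacent sides, whereas the second Miranda--Talenti pairing combines with $\lambda(w, \partial_{\bm n}v)_{\partial K}$ to produce exactly $(\partial^2 w/\partial \bm t^2 + \lambda w, \jp{\partial v/\partial \bm n})_e$. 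On boundary edges $e \subset \partial \varOmega$, both $w \in H_0^1(\varOmega)$ and $v \in H_0^1(\varOmega)$ force $w = \partial^2 w/\partial \bm t^2 = 0$ and $v = \partial v/\partial \bm t = 0$ on $\partial \varOmega$, so these edges contribute nothing. The concluding assertion $\mathscr N(w,v) = 0$ for $v \in V_{h,k}^\cc$ is then immediate: the conformity $V_{h,k}^\cc \subset H = H^2(\varOmega) \cap H_0^1(\varOmega)$ forces $\partial v/\partial \bm n$ to be continuous across every interior edge, so $\jp{\partial v/\partial \bm n} = 0$ on each $e \in \mathcal E_h^\circ$. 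I expect the main technical obstacle to be the careful sign and orientation bookkeeping during the edge-by-edge assembly, together with justifying the integration by parts at the low regularity $s > 5/2$, where the pairings must be interpreted via $L^2$--$H^{1/2}$ duality on edges.
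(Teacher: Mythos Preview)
Your proposal is correct and follows essentially the same route as the paper. Both arguments reduce $B_*(w,v)-\sum_K(L_\lambda w,L_\lambda v)_K$ to boundary integrals via the elementwise Miranda--Talenti identity and Green's formula, then exploit the $H^1$-continuity of $v$ (and of $w$) to eliminate all edge contributions except those carrying $\jp{\partial v/\partial\bm n}$. The only organizational difference is that the paper handles the lower-order $\lambda$-terms by global integration by parts on $\varOmega$ (using $v\in H_0^1(\varOmega)$ directly, so no element-boundary terms $\lambda(\partial_{\bm n}w,v)_{\partial K}$ ever appear), whereas you integrate by parts elementwise and cancel the extra boundary term $\lambda(\partial_{\bm n}w,v)_{\partial K}$ afterwards; this is cosmetic.
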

\begin{proof}
For any $K\in\mathcal K_h$, using integration by parts one can obtain (cf. \cite{MS2013})
 \begin{equation}\label{oneq:cst1}
 \begin{split}
  & \sum_{K\in\mathcal K_h}(D^2w,D^2v)_K-\sum_{K\in\mathcal K_h}(\Delta w,\Delta v)_K   \\
     =~~ &\sum_{e\in\mathcal E_h^\circ}(\frac{\partial^2w}{\partial\bm t^2},\jp{\frac{\partial v}{\partial\bm n}})_e -
     \sum_{e\in\mathcal E_h}(\frac{\partial^2w}{\partial\bm t\partial\bm n},\jp{\frac{\partial v}{\partial\bm t}})_e.
 \end{split}
\end{equation}
Using  the fact that $\frac{\partial v}{\partial\bm t}$ is continuous on $e$, $\forall e\in\mathcal E_h$, we claim that the second term in the
right hand of \eqref{oneq:cst1} is zero and we obtain that
 \begin{equation}\label{oneq:cst3}
\sum_{K\in\mathcal K_h}\Big[(D^2w,D^2v)_K-(\Delta w,\Delta v)_K\Big]=
\sum_{e\in\mathcal E_h^\circ}(\frac{\partial^2w}{\partial\bm t^2},\jp{\frac{\partial v}{\partial\bm n}})_e.
\end{equation}
Similarly, using integration by parts again, we deduce
 \begin{equation}\label{oneq:cst4}
-\lambda(\Delta w,v)_\varOmega=\lambda(\nabla w,\nabla v)_\varOmega
\end{equation}
and
 \begin{equation}\label{oneq:cst5}
-\lambda\sum_{K\in\mathcal K_h}(w,\Delta v)_K=\lambda(\nabla w,\nabla v)_\varOmega-
\lambda\sum_{e\in\mathcal E_h^\circ}(w,\jp{\frac{\partial v}{\partial\bm n}})_e.
\end{equation}
The conclusion can now be demonstrated by using \eqref{oneq:cst3}-\eqref{oneq:cst5} and some basic algebraic manipulation.
\end{proof}

With the help of the above lemma, we have the following estimate for $\varXi_2^\cc(u_I^\cc)$.
\begin{lemma}\label{lem:errC2}
If the true solution $u\in H_0^1(\varOmega)\cap H^s(\varOmega)$ with $s>\frac52$, then we have
\begin{equation}\label{ineq:errC2}
  \varXi_2^\cc(u_I^\cc)\leq Ch^{\sigma_{k,s}}\|u\|_{s,\varOmega}.
\end{equation}
\end{lemma}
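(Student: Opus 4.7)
The plan relies on two structural observations. First, $u$ solves \eqref{eq:HJB} pointwise, so $F_\gamma[u] = 0$ a.e., hence $\int_K F_\gamma[u]\, L_\lambda w_h = 0$ for every $K$. Second, since $V_{h,k}^\cc\subset H$, the jump term $\mathscr N(u,w_h)$ in Lemma \ref{lem:cst} vanishes, giving $B_*(u,w_h)=\sum_K(L_\lambda u,L_\lambda w_h)_K$ and thus $B_{\frac12}(u,w_h)=\sum_K(L_\lambda u,L_\lambda w_h)_K = -A(u;w_h)$. Consequently the quantity to estimate simplifies to
\[A_h^\cc(u_I^\cc;w_h)+B_{\frac12}(u,w_h)=\sum_{K\in\mathcal K_h}\Big(A_h^{K,\cc}(u_I^\cc;w_h)-A^K(u;w_h)\Big),\]
a clean discrete-versus-continuous consistency quantity.

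Next I would decompose each element contribution into three pieces by inserting the pivot $(\widehat F_\gamma^\cc[u]-\widehat L_\lambda^\cc u,\widehat L_\lambda^\cc w_h)_K$: (i) a Cordes-type piece $\big((\widehat F_\gamma^\cc[u_I^\cc]-\widehat L_\lambda^\cc u_I^\cc)-(\widehat F_\gamma^\cc[u]-\widehat L_\lambda^\cc u),\widehat L_\lambda^\cc w_h\big)_K$; (ii) a projection-consistency piece $\big((\widehat F_\gamma^\cc-F_\gamma)[u]-(\widehat L_\lambda^\cc-L_\lambda)u,\widehat L_\lambda^\cc w_h\big)_K + (F_\gamma[u]-L_\lambda u,\widehat L_\lambda^\cc w_h - L_\lambda w_h)_K$; and (iii) the stabilization $S^{K,\cc}(u_I^\cc-\Pi_{k,\Hs}^{K,\cc}u_I^\cc,\, w_h-\Pi_{k,\Hs}^{K,\cc}w_h)$.

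For piece (i), Lemma \ref{lem:cordes} with $w=u_I^\cc$ and $v=u$ together with Cauchy--Schwarz give the bound $\sqrt{1-\varepsilon}\,|u_I^\cc-u|_{B,h,\Pi,K}\|\widehat L_\lambda^\cc w_h\|_{0,K}$; $L^2$-stability of $\Pi_{k-2}^{K,\cc}$ plus Lemma \ref{lem:appc} then yield $Ch^{\sigma_{k,s}}\|u\|_{s,K}\norm{w_h}_{B,h,K}$. For the first summand of (ii), the estimate $|\sup_\alpha f_\alpha-\sup_\alpha g_\alpha|\leq\sup_\alpha|f_\alpha-g_\alpha|$ reduces everything to $L^2$-projection errors of $D^2 u,\nabla u$ and $u$, each of order $h^{\sigma_{k,s}}\|u\|_{s,K}$. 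The delicate summand is the second one: the key identity is $\widehat L_\lambda^\cc w_h=\Pi_{k-2}^{K,\cc}L_\lambda w_h$ (since $\Pi_{k-2}^{K,\cc}$ acts componentwise and commutes with the trace), which renders $\widehat L_\lambda^\cc w_h-L_\lambda w_h$ $L^2$-orthogonal to $\mathbb P_{k-2}(K)$; subtracting $\Pi_{k-2}^{K,\cc}(F_\gamma[u]-L_\lambda u)$ from the left slot and using Cauchy--Schwarz together with standard $\mathbb P_{k-2}(K)$-approximation of $L_\lambda u$ gives $Ch^{\sigma_{k,s}}\|u\|_{s,K}\|L_\lambda w_h\|_{0,K}$. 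For piece (iii), the stability bound \eqref{ineq:Ss}, polynomial preservation of $\Pi_{k,\Hs}^{K,\cc}$, a standard add-and-subtract-polynomial argument, and Lemma \ref{lem:appc} yield the same rate. Summing over $K$, applying Cauchy--Schwarz, and dividing by $\norm{w_h}_{B,h}$ delivers \eqref{ineq:errC2}.

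The main obstacle is piece (ii)'s second summand: a crude Cauchy--Schwarz on $(F_\gamma[u]-L_\lambda u,\widehat L_\lambda^\cc w_h-L_\lambda w_h)_K$ yields no smallness in $h$, since $\|\widehat L_\lambda^\cc w_h-L_\lambda w_h\|_{0,K}$ is only $O(1)$-bounded by $\norm{w_h}_{B,h,K}$; extracting the correct $h^{\sigma_{k,s}}$ rate forces one to recognize the commutation identity $\widehat L_\lambda^\cc = \Pi_{k-2}^{K,\cc} L_\lambda$ and exploit the resulting $L^2$-orthogonality to $\mathbb P_{k-2}(K)$.
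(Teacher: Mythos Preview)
Your proposal is correct and follows essentially the same approach as the paper: the paper also reduces to $(L_\lambda u,L_\lambda w_h)_\varOmega$ via Lemma \ref{lem:cst} and $F_\gamma[u]=0$, then splits into four terms $T_1,T_2,T_3,T_4$ that coincide with your pieces (i), the two summands of (ii), and (iii). In particular, your ``key identity'' $\widehat L_\lambda^\cc w_h=\Pi_{k-2}^{K,\cc}L_\lambda w_h$ and the resulting orthogonality is exactly how the paper handles $T_3$, flipping $(I-\Pi_{k-2}^{K,\cc})$ onto $L_\lambda u$.
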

\begin{proof}
  Applying Lemma \ref{lem:cst}, we get
  \begin{equation}\label{ineq:errC2-1}
    B_{\frac12}(u,w_h)=(L_\lambda u,L_\lambda w_h)_\varOmega.
  \end{equation}
  Since $F_\gamma[u]=0$ $\mathrm{a.e.}$ in $\Omega$, we have
  \begin{equation}\label{ineq:errC2-2}
  \begin{split}
  &\quad A_h^\cc(u_I^\cc;w_h)+B_{\frac12}(u,w_h) \\
  & = \sum_{K\in\mathcal K_h}\Big((\widehat F_\gamma^\cc[u_I^\cc]-\widehat L_\lambda^\cc u_I^\cc,\widehat L_\lambda^\cc w_h)_K+S^{K,\cc}(u_I^\cc-\Pi_{k,\Hs}^{K,\cc}u_I^\cc,w_h-\Pi_{k,\Hs}^{K,\cc}w_h)\Big)\\
  &\qquad+(L_\lambda u,L_\lambda w_h)_\varOmega\\
   &=\sum_{K\in\mathcal K_h}(\widehat F_\gamma^\cc[u_I^\cc]-\widehat F_\gamma^\cc[u]-\widehat L_\lambda^\cc (u_I^\cc-u),\widehat L_\lambda^\cc w_h)_K\\
   &\quad+\sum_{K\in\mathcal K_h}(\widehat F_\gamma^\cc[u]- F_\gamma[u]-(\widehat L_\lambda^\cc u-L_\lambda u),\widehat L_\lambda^\cc w_h)_K\\
   &\quad+\sum_{K\in\mathcal K_h}(L_\lambda u,L_\lambda w_h-\widehat L_\lambda^\cc w_h)_K\\
   &\quad+\sum_{K\in\mathcal K_h} S^{K,\cc}(u_I^\cc-\Pi_{k,\Hs}^{K,\cc}u_I^\cc,w_h-\Pi_{k,\Hs}^{K,\cc}w_h))\\
   &:=T_1+T_2+T_3+T_4.
  \end{split}
  \end{equation}
Now we estimate $T_i$, $i=1,\cdots,4$. For $T_1$, we use the Cauchy-Schwarz inequality, an argument similar as in the proof of Lemma \ref{lem:cordes}, Lemma \ref{lem:appc},  and the stability of $L^2$ projection, we  deduce
  \begin{equation}\label{ineq:errC2-T1}
  \begin{split}
  |T_1|&\leq C\left[\sum_{K\in\mathcal K_h}\Big(|u-u_I^\cc|_{2,K}^2+2\lambda|u-u_I^\cc|_{1,K}^2+\lambda^2\|u-u_I^\cc\|_{0,K}^2\Big)\right]^{\frac12}\\
  &\qquad\qquad\cdot\left(\sum_{K\in\mathcal K_h}\|\widehat L_\lambda^\cc w_h\|_{0,K}^2\right)^{\frac12}\\
  &\leq Ch^{\sigma_{k,s}}\|u\|_{s,\varOmega}\norm{w_h}_{B,h}.
 \end{split}
  \end{equation}
  For $T_2$, using, the Cauchy-Schwarz inequality and the standard approximation theory, we get
    \begin{equation}\label{ineq:errC2-T2}
  \begin{split}
  |T_2|&\leq C\inf_{p\in\mathbb P_k(\mathcal K_h)}\left(\sum_{K\in\mathcal K_h}\|u-p\|_{2,K}^2\right)^{\frac12}\cdot\left(\sum_{K\in\mathcal K_h}\|\widehat L_\lambda^\cc w_h\|_{0,K}^2\right)^{\frac12}\\
  &\leq Ch^{\sigma_{k,s}}\|u\|_{s,\varOmega}\norm{w_h}_{B,h}.
 \end{split}
  \end{equation}
For $T_3$, using orthogonality and standard approximation theory, we have:
    \begin{equation}\label{ineq:errC2-T3}
  \begin{split}
  |T_3|&=\sum_{K\in\mathcal K_h}(L_\lambda u,(I-\Pi_{k-2}^{K,\cc})\Delta w_h)_K-\sum_{K\in\mathcal K_h}(L_\lambda u, \lambda((I-\Pi_{k-2}^{K,\cc})w_h))_K\\
  &=\sum_{K\in\mathcal K_h}((I-\Pi_{k-2}^{K,\cc})L_\lambda u,\Delta w_h)_K-\sum_{K\in\mathcal K_h}((I-\Pi_{k-2}^{K,\cc})L_\lambda u, \lambda w_h)_K \\
  &\leq Ch^{\sigma_{k,s}}\|u\|_{s,\varOmega}\norm{w_h}_{B,h}.
 \end{split}
  \end{equation}
Finally, for $T_4$, using \eqref{ineq:Ss}, adding and subtracting $u$, $\Pi_{k,\Hs}^{K,\cc}u$, we obtain
\begin{equation}\label{ineq:errC2-T4}
  |T_4|\leq Ch^{\sigma_{k,s}}\|u\|_{s,\varOmega}\norm{w_h}_{B,h}.
  \end{equation}
  Substituting \eqref{ineq:errC2-T1}-\eqref{ineq:errC2-T4} into \eqref{ineq:errC2-2}, we arrive at
  \[|A_h(u_I^\cc;w_h)+B_{\frac12}(u,w_h)|\leq  Ch^{\sigma_{k,s}}\|u\|_{s,\varOmega}\norm{w_h}_{B,h},\]
  which completes the proof.
\end{proof}

\begin{lemma}\label{lem:errC3}
Let $u\in H_0^1(\varOmega)\cap H^s(\varOmega)$ be the solution of the problem \eqref{eq:HJB} with $s>2$.  Then, the following estimate holds
\begin{equation}\label{ineq:errC3}
  \inf_{p\in\mathbb P_k(\mathcal K_h)}\varXi_3^\cc(p)\leq Ch^{\sigma_{k,s}}\|u\|_{s,\varOmega}.
\end{equation}
\end{lemma}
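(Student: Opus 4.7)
The plan is to select $p\in\mathbb P_k(\mathcal K_h)$ elementwise as a standard polynomial approximation of $u|_K$ (for instance, the averaged Taylor polynomial of degree $k$). Bramble--Hilbert gives $|u-p|_{m,K}\le C h_K^{s-m}\|u\|_{s,K}$ for $m=0,1,2$, and summing over elements yields $\norm{u-p}_{B,h}\le C h^{\sigma_{k,s}}\|u\|_{s,\varOmega}$, which disposes of the first term in $\varXi_3^\cc(p)$. Combining this with the triangle inequality and the $L^2$-stability of $\Pi_{k-2}^{K,\cc}$ also produces the auxiliary bounds $\|p-\Pi_{k-2}^{K,\cc}p\|_{0,K}\le C h_K^{\sigma_{k,s}}\|u\|_{s,K}$ and an analogous estimate for $\|\nabla p-\Pi_{k-2}^{K,\cc}\nabla p\|_{0,K}$, which will feed the consistency estimate.

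For the consistency term I split $B_{\frac12}^K=\tfrac12 B_*^K+\tfrac12(L_\lambda\cdot,L_\lambda\cdot)_K$ and treat each half separately. In $B_*^K-B_{h,*}^{K,\cc}$ the Hessian contribution vanishes identically, because $D^2p\in[\mathbb P_{k-2}(K)]^{2\times 2}$ gives $\Pi_{k-2}^{K,\cc}D^2p=D^2p$ and then the $L^2$-orthogonality of $\Pi_{k-2}^{K,\cc}$ kills the difference. For the $\nabla$ and $L^2$ parts, the same orthogonality reduces the expression to
\[
(\nabla p,\nabla w_h)_K-(\Pi_{k-2}^{K,\cc}\nabla p,\Pi_{k-2}^{K,\cc}\nabla w_h)_K=(\nabla p-\Pi_{k-2}^{K,\cc}\nabla p,\nabla w_h-\Pi_{k-2}^{K,\cc}\nabla w_h)_K,
\]
which by Cauchy--Schwarz and the approximation estimates above is bounded by $C h_K^{\sigma_{k,s}}\|u\|_{s,K}\norm{w_h}_{B,h,K}$, with the $L^2$ piece giving an even higher-order contribution.

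The $L_\lambda$ part is handled most cleanly through the key structural observation that $\widehat L_\lambda^\cc v=\Pi_{k-2}^{K,\cc}(L_\lambda v)$ for every $v\in V_{h,k}^{K,\cc}$, which follows from $\tr\Pi_{k-2}^{K,\cc}D^2 v=\Pi_{k-2}^{K,\cc}\Delta v$ by linearity of the projection. Hence the $L_\lambda$ difference is simply an $L^2$-projection error, and orthogonality delivers
\[
(L_\lambda p,L_\lambda w_h)_K-(\widehat L_\lambda^\cc p,\widehat L_\lambda^\cc w_h)_K=(L_\lambda p-\Pi_{k-2}^{K,\cc}L_\lambda p,\,L_\lambda w_h-\Pi_{k-2}^{K,\cc}L_\lambda w_h)_K.
\]
Since $\Delta p\in\mathbb P_{k-2}(K)$, the Laplacian piece cancels and $L_\lambda p-\Pi_{k-2}^{K,\cc}L_\lambda p=-\lambda(p-\Pi_{k-2}^{K,\cc}p)$, so Cauchy--Schwarz gives the bound $C\lambda\|p-\Pi_{k-2}^{K,\cc}p\|_{0,K}\|L_\lambda w_h\|_{0,K}\le C h_K^{\sigma_{k,s}}\|u\|_{s,K}\norm{w_h}_{B,h,K}$.

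Assembling these contributions element by element, summing with Cauchy--Schwarz, and combining with the approximation estimate for $\norm{u-p}_{B,h}$ produces $\varXi_3^\cc(p)\le C h^{\sigma_{k,s}}\|u\|_{s,\varOmega}$; taking the infimum over $p\in\mathbb P_k(\mathcal K_h)$ completes the argument. I expect the delicate step to be the identification $\widehat L_\lambda^\cc=\Pi_{k-2}^{K,\cc}\circ L_\lambda$ together with the resulting cancellation of the Hessian piece in $L_\lambda p-\Pi_{k-2}^{K,\cc}L_\lambda p$; without this observation, one would need to control each of $\Delta p$ and $\lambda p$ separately, picking up mismatched terms that would not assemble into the optimal rate.
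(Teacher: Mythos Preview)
Your proposal is correct and follows essentially the same route as the paper: choose $p|_K$ to be a polynomial approximation of $u$ (the paper takes $p|_K=\Pi_k^{K,\cc}u$, you take the averaged Taylor polynomial, which works equally well), then reduce each difference $B_*^K-B_{h,*}^{K,\cc}$ and $(L_\lambda\cdot,L_\lambda\cdot)_K-(\widehat L_\lambda^\cc\cdot,\widehat L_\lambda^\cc\cdot)_K$ via the $L^2$-orthogonality of $\Pi_{k-2}^{K,\cc}$ and bound the residuals with standard polynomial approximation. Your observation that $\widehat L_\lambda^\cc=\Pi_{k-2}^{K,\cc}\circ L_\lambda$ and that the Hessian pieces vanish identically (since $D^2p\in[\mathbb P_{k-2}(K)]^{2\times2}$) streamlines the argument slightly---the paper keeps those terms and simply bounds them by $Ch^{\sigma_{k,s}}\|u\|_{s,K}$ (they are in fact zero)---but this is a presentational refinement rather than a different strategy.
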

\begin{proof}
For any $K\in\mathcal K_h$, we take $p\in\mathbb P_k(\mathcal K_h)$ and $p|_K=\Pi_k^{K,\cc}u$.  From the standard approximation theory, we have
  \begin{equation}\label{ineq:errC3-1}
  \norm{u-p}_{B,h}\leq Ch^{\sigma_{k,s}}\|u\|_{s,\varOmega}.
\end{equation}
We now  focus on estimating the second term  of $\varXi_3^\cc(p)$, and we first derive for each $K\in\mathcal K_h$ and $w_h\in V_{h,k}^\cc$ that
\begin{equation}\label{ineq:errC3-2}
\begin{split}
  &(D^2\Pi_k^{K,\cc}u,D^2w_h)_K-(\Pi_{k-2}^{K,\cc}D^2\Pi_k^{K,\cc}u,\Pi_{k-2}^{K,\cc}D^2w_h)_K\\
  =~&(D^2\Pi_k^{K,\cc}u,D^2w_h)_K-(D^2\Pi_k^{K,\cc}u,\Pi_{k-2}^{K,\cc}D^2w_h)_K\\
  =~&(D^2\Pi_k^{K,\cc}u,(I-\Pi_{k-2}^{K,\cc})D^2w_h)_K\\
  =~&((I-\Pi_{k-2}^{K,\cc})D^2\Pi_k^{K,\cc}u,D^2w_h)_K\\
  \leq~& Ch^{\sigma_{k,s}}\|u\|_{s,K}|w_h|_{2,K}.
  \end{split}
\end{equation}
Similarly, we  obtain
\begin{equation}\label{ineq:errC3-3}
(\nabla\Pi_k^{K,\cc}u,\nabla w_h)_K-(\Pi_{k-2}^{K,\cc}\nabla\Pi_k^{K,\cc}u,\Pi_{k-2}^{K,\cc}\nabla w_h)_K\leq Ch^{\sigma_{k,s}}\|u\|_{s,K}|w_h|_{1,K},
\end{equation}
and
\begin{equation}\label{ineq:errC3-4}
(\Pi_k^{K,\cc}u, w_h)_K-(\Pi_{k-2}^{K,\cc}\Pi_k^{K,\cc}u,\Pi_{k-2}^{K,\cc} w_h)_K\leq Ch^{\sigma_{k,s}}\|u\|_{s,K}\|w_h\|_{0,K}.
\end{equation}
Combing \eqref{ineq:errC3-2}, \eqref{ineq:errC3-3} and \eqref{ineq:errC3-4} leads to
\begin{equation}\label{ineq:errC3-5}
|B_*^K(p,w_h)-B_{h,*}^{K,\cc}(p,w_h)|\leq Ch^{\sigma_{k,s}}\|u\|_{s,K}\norm{w_h}_{B,h,K}.
\end{equation}
Using the same argument as above, one can show that
\begin{equation}\label{ineq:errC3-6}
|(L_\lambda p,L_\lambda w_h)_K-(\widehat L_\lambda^\cc p,\widehat L_\lambda^\cc w_h)_K|\leq Ch^{\sigma_{k,s}}\|u\|_{s,K}\norm{w_h}_{B,h,K}.
\end{equation}
Finally, we obtain the desired result from \eqref{ineq:errC3-5}, \eqref{ineq:errC3-6},  and the Cauchy-Schwarz inequality.
\end{proof}

By employing Lemmas \ref{lem:errC1}, \ref{lem:errC2}, \ref{lem:errC3}, and Lemma \ref{lem:errorbound}, we derive the following \textit{a priori} error estimate.
\begin{theorem}\label{thm:errC}
  Let $u\in H^s(\varOmega)\cap H_0^1(\varOmega)$ with $s>\frac52$ be the solution of the problem \eqref{eq:HJB}, and let $u_h^\cc$ be the solution of the discrete problem \eqref{eq:c1vem}. Then, the following estimate holds:
  \begin{equation}\label{ineq:errconforming}
    \norm{u-u_h^\cc}_{B,h}\leq Ch^{\sigma_{k,s}}\|u\|_{s,\varOmega}.
  \end{equation}
\end{theorem}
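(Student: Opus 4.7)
The proof is essentially a synthesis step: all of the technical machinery has been packaged into the abstract bound of Lemma \ref{lem:errorbound} (specialised to \eqref{ineq:errC}) and the three estimates in Lemmas \ref{lem:errC1}, \ref{lem:errC2}, and \ref{lem:errC3}. My plan is therefore to start from the specialised abstract inequality
\[
\norm{u-u_h^\cc}_{B,h} \le C\Bigl[\inf_{v_h\in V_{h,k}^\cc}\bigl(\varXi_1^\cc(v_h)+\varXi_2^\cc(v_h)\bigr) + \inf_{p\in\mathbb P_k(\mathcal K_h)}\varXi_3^\cc(p)\Bigr],
\]
and to choose specific near-optimal arguments for the two infima.

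For the first infimum, the plan is to take $v_h$ equal to the canonical interpolant $u_I^\cc$ introduced just before Lemma \ref{lem:appc}. Lemma \ref{lem:errC1} then gives $\varXi_1^\cc(u_I^\cc)\le Ch^{\sigma_{k,s}}\|u\|_{s,\varOmega}$, while Lemma \ref{lem:errC2} yields exactly the same bound for $\varXi_2^\cc(u_I^\cc)$, provided the regularity $s>\tfrac52$ (which is precisely the hypothesis of the theorem and is needed so that the consistency identity of Lemma \ref{lem:cst} applies to $u$). For the second infimum, one simply takes $p$ piecewise defined by $p|_K=\Pi_k^{K,\cc}u$, so Lemma \ref{lem:errC3} gives $\inf_{p}\varXi_3^\cc(p)\le Ch^{\sigma_{k,s}}\|u\|_{s,\varOmega}$.

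Combining these three estimates in the abstract bound yields the asserted inequality, with a constant $C$ depending on $\beta_\star$, $c_\star,c^\star$, the shape-regularity constant $\rho$ of Assumption \ref{ams:mesh}, and the coefficient data through the Cordes constant $\varepsilon$, but independent of $h$. The regularity hypothesis $s>\tfrac52$ enters in exactly one place: the edge terms $\mathscr N(u,w_h)$ appearing in Lemma \ref{lem:cst} require $\partial^2 u/\partial\bm t^2$ to have an $L^2$ trace on each edge, which is how Lemma \ref{lem:errC2} is proved, and since $w_h\in V_{h,k}^\cc\subset H$ we have $\jp{\partial w_h/\partial\bm n}=0$ so $\mathscr N(u,w_h)=0$ as noted there.

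Since all the technical content has already been discharged in the preparatory lemmas, there is no real obstacle at this final stage; the only thing to verify is that the three bounds share the same exponent $\sigma_{k,s}=\min\{k-1,s-2\}$ and the same multiplicative constant structure $\|u\|_{s,\varOmega}$, which they manifestly do. In retrospect, the genuinely hard step in the whole argument was Lemma \ref{lem:errC2}: producing a Galerkin-orthogonality surrogate that controls $A_h^\cc(u_I^\cc;w_h)+B_{\frac12}(u,w_h)$ required simultaneously invoking the Cordes-type contraction of Lemma \ref{lem:cordes}, the conforming consistency identity of Lemma \ref{lem:cst}, and the $L^2$-projection approximation bounds on the polynomial spaces $\Pi_{k-2}^{K,\cc}$. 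Everything below the level of this theorem rests on that estimate.
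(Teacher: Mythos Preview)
Your proposal is correct and follows exactly the paper's route: the theorem is stated immediately after Lemmas \ref{lem:errC1}--\ref{lem:errC3} with the one-line justification that it follows by inserting those three estimates (with $v_h=u_I^\cc$ and $p|_K=\Pi_k^{K,\cc}u$) into the abstract bound of Lemma \ref{lem:errorbound}. Your additional commentary on where $s>\tfrac52$ is used and why $\mathscr N(u,w_h)=0$ in the conforming case is accurate and matches the paper's treatment.
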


\section{The nonconforming virtual element method}\label{sec:nonconforming}
In this section, we further investigate the $C^0$-nonconforming virtual element method for the HJB equation \eqref{eq:HJB}. We first recall the local $C^0$-nonconforming virtual element space as introduced in \cite{AABMR2013,ZCZ2016,LZWC2024}, defined as follows:
\begin{align*}
V_{h,k}^{K,\nc}:=\Bigg\{v\in H^2(K):\,&\Delta^2v\in\mathbb P_k(K), \Delta v|_e\in\mathbb P_{k-2}(e), e\subset\mathcal E_h\cap\partial K, \\
&(\Pi_{k,\Hs}^{K,\nc} v-v,q^\circ)_K=0\quad\forall q^\circ\in\bigcup_{j=k-3}^k\mathscr M_j^\circ(K)\Bigg\}.
\end{align*}
Here, the $H^2$ projection operator
$\Pi_{k,\Hs}^{K,\nc}:\widetilde V_{h,k}^{K,\nc}\to\mathbb P_k(K)\subset \widetilde V_{h,k}^{K,\nc}$ is defined by:\[
\left\{
    \begin{array}{l}
       (D^2\Pi_{k,\Hs}^{K,\nc}\psi,D^2q)_K=(D^2\psi,D^2q)_K\qquad \forall q\in\mathbb P_k(K)\\
       \\
       \widehat{\Pi_{k,\Hs}^{K,\nc}\psi} = \widehat \psi,\quad\widehat{\nabla\Pi_{k,\Hs}^{K,\nc}\psi} = \widehat{\nabla \psi},
    \end{array}
\right.
\]
for any $\psi\in\widetilde V_{h,k}^{K,\nc}$, and
\[\widetilde V_{h,k}^{K,\nc}:=\bigg\{v\in H^2(K): \Delta^2v\in\mathbb P_k(K), \Delta v|_e\in\mathbb P_{k-2}(e), e\subset\mathcal E_h\cap\partial K
\bigg\}. \] The projection $\Pi_{k,\Hs}^{K,\nc}\psi$  is computable,  and the space $V_{h,k}^{K,\nc}$ can be uniquely characterized using the following degrees of freedom, as detailed in \cite{ZCZ2016,LZWC2024}:
\begin{itemize}
  \item The value of $v(\xi)$, for any vertex $\xi$ in $K$;
  \item The moments $\frac1{h_e}\int_eqv$, for any $q\in\mathscr M_{k-2}(e)$ and $e\subset\partial K$;
  \item The moments $\int_eq\frac{\partial v}{\partial\bm n}$, for any $q\in\mathscr M_{k-2}(e)$ and $e\subset\partial K$;
  \item For $k\geq 4$, the moments $\frac1{h_K^2}\int_Kqv$, for any $q\in \mathscr M_{k-4}(K)$.
\end{itemize}
Figure \ref{fig:dofnc} illustrates the degrees of freedom for the first two low-order elements with $k = 2$ and $k = 3$.
\begin{figure}
   \centering
  \includegraphics[width=0.8\textwidth]{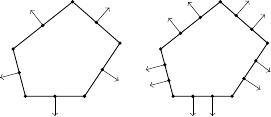}
  \caption{The degrees of freedom with $k=2$ (left) and $k=3$ (right).}
   \label{fig:dofnc}
\end{figure}

The global $C^0$-nonconforming virtual element space $V_{h,k}^\nc$ is defined as follows:
\[V_{h,k}^\nc:=\{v\in H_k^{2,\nc}(\mathcal K_h):v|_K\in V_{h,k}^{K,\nc}~~\forall K\in\mathcal K_h\}.\]

To define the nonlinear and bilinear forms for the nonconforming virtual finite element method, we first consider the $L^2$ projection operator.
 For any function $v\in V_{h,k}^{K,\nc}$, the $L^2$ projection $\Pi_k^{K,\nc}v$ of $v$ onto $\mathbb P_k(K)$ is computable by the definition of the space $V_{h,k}^{K,\nc}$. Furthermore, by utilizing the identities \eqref{eq:cp1} and \eqref{eq:cp2}, it follows that $\Pi_{k-2}^{K,\nc}D^2v$ and $\Pi_{k-1}^{K,\nc}\nabla v$  are also computable.

Analogous to the conforming case, we can define $\widehat F_\gamma^\nc[\cdot]$, $\widehat L_\lambda^\nc$, $\dof_j^\nc$ and $B_{h,\frac12}^{K,\nc}(\cdot,\cdot)$. We now define
\[A_h^{K,\nc}(w_h;v_h):=(\widehat F_\gamma^\nc[w_h]-\widehat L_\lambda^\nc w_h,\widehat L_\lambda^\nc v_h)_K+S^{K,\nc}(u_h-\Pi_{k,\Hs}^{K,\nc}u_h,v_h-\Pi_{k,\Hs}^{K,\nc}v_h),\]
where $S^{K,\nc}(\cdot, \cdot)$ is a symmetric and positive definite bilinear form satisfying
\begin{equation}\label{ineq:Ssnc}
  c_\star\norm{v_h-\Pi_{k,\Hs}^{K,\nc}v_h}_{B,h,K}^2\leq S^{K,\nc}(v_h-\Pi_{k,\Hs}^{K,\nc}v_h,v_h-\Pi_{k,\Hs}^{K,\nc}v_h) \leq c^\star\norm{v_h-\Pi_{k,\Hs}^{K,\nc}v_h}_{B,h,K}^2,
\end{equation}
for any $v_h\in V_h^{K,\nc}$.
 Specifically, we choose
\[S^{K,\nc}(u_h,v_h):=(h_K^{-2}+2\lambda+\lambda^2h_K^2)\sum_{j=1}^{n_K^\nc}\dof_j^\nc(u_h)\cdot\dof_j^\nc(v_h),\]
where $n_K^\nc$ denotes the number of degrees of freedom in element $K$.

Finally, we define the nonlinear form
\[a_h^{K,\nc}(w_h;v_h):=A_h^{K,\nc}(w_h;v_h)+B_{h,\frac12}^{K,\nc}(w_h,v_h).\]
The $C^0$-nonconforming virtual element scheme for solving the HJB equation is then formulated as: Find $u_h^\nc\in V_{h,k}^\nc$ such that:
\begin{equation}\label{eq:c0vem}
  a_h^\nc(u_h^\nc;v_h)=0\qquad\forall v_h\in V_{h,k}^\nc.
\end{equation}

By employing an argument analogous to Lemma \ref{lem:e&uc}, we can prove the following lemma
\begin{lemma}
The problem \eqref{eq:c0vem} admits a unique solution.
\end{lemma}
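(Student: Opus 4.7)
The plan is to mimic the proof of Lemma \ref{lem:e&uc} verbatim, replacing every $\cc$ superscript by $\nc$, and then invoke the abstract existence/uniqueness result Lemma \ref{lem:e&u}. The three ingredients from Assumption \ref{ams:a} need to be verified for the nonconforming discrete forms: the boundedness \eqref{ineq:boundB} of $B_{h,\theta}^{K,\nc}$, the strong monotonicity \eqref{ineq:cov}, and the Lipschitz continuity \eqref{ineq:Lip} of $a_h^{\nc}$. Note that $V_{h,k}^{\nc}\subset H_k^{2,\nc}(\mathcal K_h)$, on which $\norm{\cdot}_{B,h}$ has already been shown to be a norm in Section \ref{sec:framework}.

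The first step is to establish the nonconforming analogue of Lemma \ref{lem:cordes}. The proof proceeds identically: one applies the triangle inequality to the supremum, expands the linearized quantity $\widehat L^{\alpha,\nc} z - \widehat L_\lambda^\nc z$ (with $z = w-v$), and uses the Cauchy–Schwarz inequality together with the pointwise bound $C^\alpha \le 1-\varepsilon$ coming directly from the Cordes condition and the definition of $\gamma^\alpha$. The polynomial projections $\Pi_{k-2}^{K,\nc}$ play the same algebraic role as $\Pi_{k-2}^{K,\cc}$, so the argument transfers without modification.

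Next, for the monotonicity, I would take $u_h,v_h\in V_{h,k}^{\nc}$ and split
\[
a_h^{\nc}(u_h;u_h-v_h)-a_h^{\nc}(v_h;u_h-v_h)=J_1+J_2,
\]
where $J_1$ collects the $A_h^{K,\nc}$ contributions and $J_2$ the $B_{h,\frac12}^{K,\nc}$ contribution. Using the nonconforming Cordes lemma and Cauchy–Schwarz gives
\[
J_1 \ge -\sqrt{1-\varepsilon}\,|u_h-v_h|_{B,h,\Pi}\,\Bigl(\sum_{K}\|\widehat L_\lambda^\nc(u_h-v_h)\|_{0,K}^2\Bigr)^{\!1/2}+\sum_{K} S^{K,\nc}\bigl(u_h-v_h-\Pi_{k,\Hs}^{K,\nc}(u_h-v_h),\,\cdot\bigr),
\]
and Young's inequality combined with the lower bound in \eqref{ineq:Ssnc} and the definition of $B_{h,\frac12}^{K,\nc}$ yields
\[
J_1+J_2 \ge \tfrac{\varepsilon}{2}\,|u_h-v_h|_{B,h,\Pi}^2 + c_\star\sum_{K}\norm{u_h-v_h-\Pi_{k,\Hs}^{K,\nc}(u_h-v_h)}_{B,h,K}^2 \ge C\norm{u_h-v_h}_{B,h}^2,
\]
where the last inequality uses a triangle-inequality argument as in \eqref{ineq:e&uc4}, together with the stability of the $L^2$ projections in the definition of $|\cdot|_{B,h,\Pi}$. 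This gives \eqref{ineq:cov}.

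For the Lipschitz estimate \eqref{ineq:Lip}, I would decompose $a_h^{\nc}(u_h;w_h)-a_h^{\nc}(v_h;w_h)$ into three analogous pieces $R_1+R_2+R_3$: the Cordes term, the stabilization term, and the projected bilinear form contribution. Each is bounded by $C\norm{u_h-v_h}_{B,h}\norm{w_h}_{B,h}$ via Cauchy–Schwarz, \eqref{ineq:Ssnc}, and the boundedness \eqref{ineq:boundB} of $B_{h,\frac12}^{K,\nc}$ (which is immediate from the continuity of the $L^2$ projections). The only point that warrants care – and the one I regard as the main obstacle – is that in the nonconforming setting the $H^2$ projection $\Pi_{k,\Hs}^{K,\nc}$ is only determined up to the quasi-average normalization, so in the monotonicity estimate one must genuinely exploit the triangle inequality $\norm{u_h-v_h}_{B,h,K}\le \norm{u_h-v_h-\Pi_{k,\Hs}^{K,\nc}(u_h-v_h)}_{B,h,K}+\norm{\Pi_{k,\Hs}^{K,\nc}(u_h-v_h)}_{B,h,K}$ and control the second term by $|u_h-v_h|_{B,h,\Pi,K}$ through the definition of $\Pi_{k,\Hs}^{K,\nc}$ and standard polynomial inverse inequalities on star-shaped elements afforded by Assumption \ref{ams:mesh}. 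Once \eqref{ineq:cov} and \eqref{ineq:Lip} are in hand, Lemma \ref{lem:e&u} delivers existence and uniqueness of $u_h^\nc \in V_{h,k}^{\nc}$.
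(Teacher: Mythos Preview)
Your proposal is correct and matches the paper's own approach exactly: the paper simply states that the result follows ``by employing an argument analogous to Lemma~\ref{lem:e&uc}'', and you have spelled out precisely that analogy. The extra care you flag about controlling $\norm{\Pi_{k,\Hs}^{K,\nc}(u_h-v_h)}_{B,h,K}$ by $|u_h-v_h|_{B,h,\Pi,K}$ is not special to the nonconforming case---the same step is implicit in the conforming proof---so no new obstacle arises.
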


To proceed with the error estimation, we first recall the canonical interpolation  $v_I^\nc|_K\in V_{h,k}^{K,\nc}$ for a given smooth function  $v$, which satisfies
\[\dof_j^\nc(v-v_I^\nc)=0,\quad j=1,\cdots, n_K^\nc.\]
Under the mesh assumption \ref{ams:mesh}, we then recall the following approximation property from \cite{ZCZ2016,LZWC2024}.
 \begin{lemma}\label{lem:appnc}
For $m=0,1,2$, $2< s\leq k+1$, and $v\in H^s(K)$, we have
\begin{equation}\label{ineq:appnc}
  \|v-v_I^\nc\|_{m,K}\leq Ch^{s-m}\|v\|_{s,K}.
\end{equation}
\end{lemma}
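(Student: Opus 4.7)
\textbf{Proof plan for Lemma~\ref{lem:appnc}.}
The plan is to mirror the classical VEM interpolation argument already used for $v_I^\cc$ in Lemma~\ref{lem:appc}, adapted to the nonconforming enhanced space $V_{h,k}^{K,\nc}$. The strategy rests on three pillars: (i) polynomial invariance of the interpolation operator, (ii) a stability estimate of the interpolation operator with respect to the degrees of freedom, and (iii) a Bramble--Hilbert estimate on the star-shaped polygon $K$. With these in hand the inequality follows from a standard triangle-inequality argument.

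First I would verify that $v_I^\nc$ is well defined for $v\in H^s(K)$ with $s>2$. Since $s>2$ one has $v\in C^0(\overline K)$, $v|_e\in H^{s-1/2}(e)$ and $\partial v/\partial\bm n|_e\in H^{s-3/2}(e)\subset L^2(e)$, so vertex values, edge moments of $v$, edge moments of $\partial v/\partial\bm n$, and interior moments of $v$ are all meaningful and satisfy the scaled bounds $|\dof_j^\nc(v)|\le C\,h_K^{\alpha_j}\,\|v\|_{s,K}$ with the correct powers $\alpha_j$ determined by the trace theorem and Assumption~\ref{ams:mesh}. Next, I would establish polynomial invariance: for any $p\in\mathbb P_k(K)$, one has $p\in V_{h,k}^{K,\nc}$ by construction (the enhancement condition $(\Pi_{k,\Hs}^{K,\nc} v-v,q^\circ)_K=0$ is trivially satisfied by polynomials), and unisolvence of the degrees of freedom then forces $p_I^\nc=p$.

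The central technical step is the local stability bound
\begin{equation*}
\|\phi\|_{m,K}\le C\sum_{j=1}^{n_K^\nc}h_K^{\beta_j}|\dof_j^\nc(\phi)|\qquad\forall\phi\in V_{h,k}^{K,\nc},\ m=0,1,2,
\end{equation*}
with appropriate scaling exponents $\beta_j$. This is the standard ``inverse estimate on the virtual space'' and is proved by combining (a) the biharmonic definition $\Delta^2\phi\in\mathbb P_k(K)$ together with the boundary data $\Delta\phi|_e\in\mathbb P_{k-2}(e)$ and the enhancement relation to control $|\phi|_{2,K}$ in terms of the dofs, and (b) Poincar\'e--Friedrichs inequalities on the star-shaped polygon $K$ (using Assumption~\ref{ams:mesh}(i)) to pass from $|\phi|_{2,K}$ to the full $\|\phi\|_{m,K}$ norm after accounting for the vertex values and zero-order moments. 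Once stability is in place, one writes
\begin{equation*}
v-v_I^\nc=(v-p)-(v-p)_I^\nc
\end{equation*}
for the polynomial $p\in\mathbb P_k(K)$ provided by the Dupont--Scott (Bramble--Hilbert) theorem on star-shaped sets, which yields $\|v-p\|_{m,K}\le C h^{s-m}|v|_{s,K}$. Applying the stability bound to $(v-p)_I^\nc$ together with the trace-theorem estimates for $\dof_j^\nc(v-p)$ then gives the same $O(h^{s-m})$ control, and the triangle inequality closes the proof.

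The principal obstacle is the stability estimate, because $K$ is a general polygon and there is no fixed reference element. All scaling factors $h_e^{-1}$ and $h_K^{-2}$ appearing in the moment dofs must be tracked so that the final bound is $h$-robust, which requires Assumption~\ref{ams:mesh}(ii)--(iii) to convert between $h_e$ and $h_K$. The rest of the argument—polynomial invariance, approximation of $v$ by a polynomial, and a triangle inequality—is standard once the interpolation operator is shown to be uniformly bounded.
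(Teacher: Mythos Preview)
The paper does not actually prove Lemma~\ref{lem:appnc}; it simply states the result and cites \cite{ZCZ2016,LZWC2024} where the estimate is established. Your proposal follows the classical VEM interpolation strategy---polynomial invariance of the dofs, a scaled stability bound on the local virtual space, and a Bramble--Hilbert/Dupont--Scott argument on the star-shaped polygon---which is precisely the route taken in those references, so your plan is correct and aligned with the literature the paper defers to.

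One minor caution: your description of how to obtain the stability bound (``using the biharmonic definition $\Delta^2\phi\in\mathbb P_k(K)$ together with the boundary data $\Delta\phi|_e\in\mathbb P_{k-2}(e)$ and the enhancement relation to control $|\phi|_{2,K}$'') is somewhat hand-wavy. The actual argument in \cite{ZCZ2016} proceeds by first showing a norm equivalence between $\|\phi\|_{m,K}$ and a scaled $\ell^2$-norm of the dof vector via a careful chain that passes through $\Pi_{k,\Hs}^{K,\nc}\phi$ and exploits inverse inequalities on polynomials plus trace estimates, rather than directly from the PDE constraints; the PDE constraints enter only to establish unisolvence. If you flesh out the proof, you will need to track that chain explicitly, since on general polygons there is no reference-element scaling and the uniformity of the constant with respect to the element shape depends on Assumption~\ref{ams:mesh} in a way that must be made explicit.
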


Our main result for the nonconforming virtual element methods can be stated as follows:
\begin{theorem}\label{thm:errnC}
  Let $u$ and $u_h^\nc$ be the solution of the problem \eqref{eq:HJB} and  \eqref{eq:c0vem}, respectively. Assume that $u\in H_0^1(\varOmega)
  \cap H^s(\varOmega)$ with $s>\frac52$. Then, the following error estimate holds:
  \begin{equation}\label{ineq:errnonconforming}
    \norm{u-u_h^\nc}_{B,h}\leq Ch^{\sigma_{k,s}}\|u\|_{s,\varOmega}.
  \end{equation}
\end{theorem}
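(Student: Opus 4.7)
The plan is to mimic the conforming analysis from Section~\ref{sec:conforming} by invoking the abstract error estimate in Lemma~\ref{lem:errorbound}, applied now to $V_{h,k}^\nc$, with the test choices $v_h=u_I^\nc$ and $p\in\mathbb P_k(\mathcal K_h)$ given elementwise by $p|_K=\Pi_k^{K,\nc}u$. This reduces the theorem to bounding the three nonconforming counterparts $\varXi_1^\nc(u_I^\nc)$, $\varXi_2^\nc(u_I^\nc)$, and $\inf_p\varXi_3^\nc(p)$, exactly as in Lemmas~\ref{lem:errC1}--\ref{lem:errC3}.

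The term $\varXi_1^\nc(u_I^\nc)$ is immediate from the nonconforming approximation result in Lemma~\ref{lem:appnc} and the definition of $\norm{\cdot}_{B,h}$. The polynomial consistency term $\inf_p\varXi_3^\nc(p)$ is handled by a direct transcription of the argument in Lemma~\ref{lem:errC3}: standard polynomial approximation gives $\|u-p\|_{B,h}\leq Ch^{\sigma_{k,s}}\|u\|_{s,\varOmega}$, while the $L^2$-orthogonality of $\Pi_{k-2}^{K,\nc}$ applied to Hessians, gradients, and values of $\Pi_k^{K,\nc}u$ yields the same elementwise bounds as in~\eqref{ineq:errC3-2}--\eqref{ineq:errC3-6}, since the computability and projection identities exploited there only use the defining relations of the $L^2$ projections, not $C^1$ continuity.

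The main obstacle is the consistency term $\varXi_2^\nc(u_I^\nc)$. Repeating the splitting~\eqref{ineq:errC2-2} with $\Pi_{k,\Hs}^{K,\cc}$ replaced by $\Pi_{k,\Hs}^{K,\nc}$ produces four contributions $T_1,\dots,T_4$ that can be controlled exactly as in Lemma~\ref{lem:errC2}, using Lemma~\ref{lem:appnc} in place of Lemma~\ref{lem:appc}. However, when we invoke Lemma~\ref{lem:cst} to rewrite $B_{\frac12}(u,w_h)=(L_\lambda u,L_\lambda w_h)_\varOmega+\mathscr N(u,w_h)$, the boundary contribution $\mathscr N(u,w_h)$ no longer vanishes, because the normal derivative of $w_h\in V_{h,k}^\nc$ is only weakly continuous across interelement edges. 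This extra term is the point at which the nonconforming argument genuinely departs from the conforming one.

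To control $\mathscr N(u,w_h)$ I will exploit the weak continuity built into $H_k^{2,\nc}(\mathcal K_h)$: for every $e\in\mathcal E_h^\circ$ and every $q\in\mathbb P_{k-2}(e)$, $\int_e q\jp{\partial w_h/\partial\bm n}=0$. Hence, for any edgewise polynomial $q_e\in\mathbb P_{k-2}(e)$,
\[
\mathscr N(u,w_h)=\sum_{e\in\mathcal E_h^\circ}\int_e\Bigl[\bigl(\tfrac{\partial^2 u}{\partial\bm t^2}+\lambda u\bigr)-q_e\Bigr]\jp{\tfrac{\partial w_h}{\partial\bm n}}.
\]
Taking $q_e$ to be the best $L^2(e)$ polynomial approximation of $\partial^2 u/\partial\bm t^2+\lambda u$, applying the trace theorem and scaled polynomial approximation on $e$, together with the discrete trace estimate bounding $\|\jp{\partial w_h/\partial\bm n}\|_{0,e}$ by a scaled broken $H^2$ seminorm on the adjacent elements, will produce $|\mathscr N(u,w_h)|\leq Ch^{\sigma_{k,s}}\|u\|_{s,\varOmega}\norm{w_h}_{B,h}$. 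Combining this new contribution with the bounds on $T_1,\dots,T_4$ gives $\varXi_2^\nc(u_I^\nc)\leq Ch^{\sigma_{k,s}}\|u\|_{s,\varOmega}$; inserting the three estimates into Lemma~\ref{lem:errorbound} then yields the claimed rate.
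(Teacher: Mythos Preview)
Your proposal is correct and mirrors the paper's proof: invoke Lemma~\ref{lem:errorbound} with $v_h=u_I^\nc$ and $p|_K=\Pi_k^{K,\nc}u$, repeat the conforming bounds for $\varXi_1^\nc$, $\varXi_3^\nc$ and $T_1$--$T_4$, and use Lemma~\ref{lem:cst} together with the $\mathbb P_{k-2}(e)$-orthogonality of $\jp{\partial w_h/\partial\bm n}$ to control the extra term $\mathscr N(u,w_h)$. Two minor points: Lemma~\ref{lem:cst} actually gives $B_{\frac12}(u,w_h)=(L_\lambda u,L_\lambda w_h)_\varOmega+\tfrac12\mathscr N(u,w_h)$ (note the factor $\tfrac12$), and the $h^{1/2}$ scaling in your trace bound on $\|\jp{\partial w_h/\partial\bm n}\|_{0,e}$ hinges on the jump having zero edge-mean---the paper makes this explicit by also subtracting $\mathscr P_0^e(\partial w_h/\partial\bm n)$ inside the jump before estimating.
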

\begin{proof}
  According to Lemma \ref{lem:errorbound}, we have
  \begin{equation}\label{ineq:errnonconforming-1}
  \norm{u-u_h^\nc}_{B,h}\leq C\left[\inf_{v_h\in V_{k,h}^\nc}\Big(\varXi_1^\nc(v_h)+\varXi_2^\nc(v_h)\Big)+\inf_{p\in\mathbb P_k(\mathcal K_h)}\varXi_3^\nc(p)\right],
\end{equation}
with
\[\varXi_1^\nc(v_h):=\norm{u-v_h}_{B,h},\]
\[\varXi_2^\nc(v_h):=\sup_{w_h\in V_{h,k}^\nc\setminus\{0\}}\frac{|A_h^\nc(v_h;w_h)+B_{\frac12}(u,w_h)|}{\norm{w_h}_{B,h}},\]
and
\[
  \varXi_3^\nc(p):=\norm{u-p}_{B,h}+\sup_{w_h\in V_{h,k}^\nc\setminus\{0\}}\frac1{\norm{w_h}_{B,h}}\sum_{K\in\mathcal K_h}|B_{\frac12}^K(p,w_h)-B_{h,\frac12}^{K,\nc}(p,w_h)|.
  \]

By utilizing Lemma \ref{lem:appnc} and employing similar techniques as in the conforming case described in Lemma \ref{lem:errC3}, we obtain:
  \begin{equation}\label{ineq:errnonconforming-2}
   \varXi_1^\nc(u_I^\nc)+\inf_{p\in\mathbb P_k(\mathcal K_h)}\varXi_3^\nc(p)\leq Ch^{\sigma_{k,s}}\|u\|_{s,\varOmega}.
  \end{equation}

  Next, we deal with the term $\varXi_2^\nc(u_I^\nc)$.  Using the consistency result from Lemma \ref{lem:cst}, we derive:
    \begin{equation}\label{ineq:errnonconforming-3}
  \begin{split}
  &\quad A_h(u_I^\nc;w_h)+B_{\frac12}(u,w_h) \\
  & = \sum_{K\in\mathcal K_h}\Big((\widehat F_\gamma^\nc[u_I^\nc]-\widehat L_\lambda^\nc u_I^\nc,\widehat L_\lambda^\nc w_h)_K+S^{K,\nc}(u_I^\nc-\Pi_{k,\Hs}^{K,\nc}u_I^\nc,w_h-\Pi_{k,\Hs}^{K,\nc}w_h)\Big)\\
  &\qquad+(L_\lambda u,L_\lambda w_h)_\varOmega+\frac12\mathscr N(u,w_h)\\
  &:=\mathbb T_1+\frac12\mathscr N(u,w_h).
  \end{split}
  \end{equation}
As in the proof of Lemma \ref{lem:errC2}, we can show that
\begin{equation}\label{ineq:errnonconforming-4}
   |\mathbb T_1|\leq Ch^{\sigma_{k,s}}\|u\|_{s,\varOmega}\norm{w_h}_{B,h}.
\end{equation}
  For the term $\mathscr N(u,w_h)$, we use the fact:
  \[\int_e\jp{\frac{\partial w_h}{\partial\bm n}}q=0\quad\forall q\in \mathbb P_{k-2}(e),\,\,\forall e\in\mathcal E_h^\circ,\]
to get:
  \begin{equation}\label{ineq:errnonconforming-5}
  \begin{split}
   &\int_e(\frac{\partial^2u}{\partial\bm t^2}+\lambda u)\jp{\frac{\partial w_h}{\partial\bm n}}\\
   =\,&\int_e\Big((\frac{\partial^2u}{\partial\bm t^2}+\lambda u)-\mathscr P_{k-2}^e(\frac{\partial^2u}{\partial\bm t^2}+\lambda u)\Big)
   \jp{\frac{\partial w_h}{\partial\bm n}
    -\mathscr P_0^e\left(\frac{\partial w_h}{\partial\bm n}\right)}\\
    \leq\,&\left\|(\frac{\partial^2u}{\partial\bm t^2}+\lambda u)-
    \mathscr P_{k-2}^e(\frac{\partial^2u}{\partial\bm t^2}+\lambda u)\right\|_{0,e}
    \left\|\jp{\frac{\partial w_h}{\partial\bm n}
    -\mathscr P_0^e\left(\frac{\partial w_h}{\partial\bm n}\right)}\right\|_{0,e},
  \end{split}
  \end{equation}
for any $e\in\mathcal E_h^\circ$, where $\mathscr P_{k-2}^e$ denotes the $L^2$ projection  onto the polynomial space $\mathbb P_{k-2}(e)$.

 Applying standard approximation techniques for nonconforming errors (see, e.g., \cite{C1978, BS2008}), we obtain:
  \begin{equation}\label{ineq:errnonconforming-6}
 \left\|(\frac{\partial^2u}{\partial\bm t^2}+\lambda u)-
    \mathscr P_{k-2}^e(\frac{\partial^2u}{\partial\bm t^2}+\lambda u)\right\|_{0,e}\leq Ch^{\sigma_{k,s}-\frac12}\|u\|_{s,K^l\cup K^r},
  \end{equation}
    \begin{equation}\label{ineq:errnonconforming-7}
\left\| \jp{\frac{\partial w_h}{\partial\bm n}
    -\mathscr P_0^e\left(\frac{\partial w_h}{\partial\bm n}\right)}\right\|_{0,e}\leq Ch^{\frac12}\Big(\|w_h\|_{2,K^l}^2+\|w_h\|_{2,K^r}^2\Big)^{\frac12},
  \end{equation}
  for the edge $e$ shared by elements $K^l$ and $K^r$. Consequently, from \eqref{ineq:errnonconforming-5}, \eqref{ineq:errnonconforming-6}, and \eqref{ineq:errnonconforming-7}, we find:
  \begin{equation}\label{ineq:errnonconforming-8}
   |\mathscr N(u,w_h)|\leq Ch^{\sigma_{k,s}}\|u\|_{s,\varOmega}\norm{w_h}_{B,h}.
  \end{equation}
The proof is completed by combining \eqref{ineq:errnonconforming-3} with \eqref{ineq:errnonconforming-4} and \eqref{ineq:errnonconforming-8}.
\end{proof}

\section{Semismooth Newton's method for solving the HJB equation}\label{sec:SemismoothNewton}
Since the form $a_h^\cc$ (or $a_h^\nc$) is nonlinear,
in this section we follow a similar argument as \cite{SS2014} and use the semismooth Newton’s method introduced in \cite{UM2002} to solve the discrete equations.  We take $C^0$-nonconforming virtual element as an example to
illustrate the algorithm.

To apply the semismooth Newton’s method, for any $v\in H+V_{h,k}^\nc$, we define a set of admissible maximizers as
\begin{align*}
\varLambda[v]:=\Big\{g:\varOmega\to&\varLambda: g \text{ is measurable}; \\
&g(x)\in\mathop{\text{arg max}}\limits_{\alpha\in\varLambda} \Big(\A^\alpha:\Pi_{k-2}^{K,\nc}D^2v+\bm b^\alpha\cdot\Pi_{k-2}^{K,\nc}\nabla v-c^\alpha\Pi_{k-2}^{K,\nc}v-f^\alpha\Big)\\
&\forall x\in K\quad \text{a.e.} \text{ in } K, \quad \forall K\in\mathcal K_h\Big\}.
\end{align*}

For a given measurable function $\alpha:\varOmega\to \varLambda$, we define the bilinear form $a_h^{\nc,\alpha}:V_{h,k}^\nc\times V_{h,k}^\nc\to\mathbb R$ by
\begin{align*}a_h^{\nc,\alpha}(u,v):=(\gamma^\alpha\widehat L^{\alpha,\nc} u&-\widehat L_\lambda^\nc u,\widehat L_\lambda^\nc v)_\varOmega+B_{h,\frac12}^\nc(u,v)\\
&+\sum_{K\in\mathcal K_h}S^{K,\nc}(u-\Pi_{k,\Hs}^{K,\nc}u,v-\Pi_{k,\Hs}^{K,\nc}v),
\end{align*}
and define the linear functional $l_\alpha:V_{h,k}^\nc\to\mathbb R$ by
\[l_\alpha(v):=(\gamma^\alpha f^\alpha,\widehat L_\lambda^\nc v)_\varOmega.\]

The semismooth Newton's algorithm for solving equation \eqref{eq:c0vem} is outlined in the following Algorithm \ref{algo:Semismooth}.
\begin{algorithm}
\caption{Semismooth Newton’s method}
\label{algo:Semismooth}
\begin{algorithmic}[1]
\STATE{Given an initial guess $u_h^0\in V_{h,k}^\nc$, a tolerance $tol<1$ and an integer $itermax$}
\STATE{$j\leftarrow0$; $err\leftarrow1$}
\WHILE{$j<itermax$ and $err>tol$}
\STATE{Select an arbitrary $\alpha_j\in\varLambda[u_h^j]$}
\STATE{$u_h^{j+1}\leftarrow$ the solution of
\begin{equation}a_h^{\nc,\alpha_j}(u,v)=l_{\alpha_j}(v)\quad\forall v\in V_{h,k}^\nc \end{equation}}
\STATE{$err\leftarrow\norm{u_h^{j+1}-u_h^j}_{B,h}$}
\STATE{$u_h^j\leftarrow u_h^{j+1}$}
\STATE{$j\leftarrow j+1$}
\ENDWHILE
\RETURN $u_h^j$
\end{algorithmic}
\end{algorithm}

\section{Numerical results}\label{sec:numerical}
In this section, several numerical tests will be implemented to verify the convergence theory established in previous sections. 
For simplicity, we use the lowest order element ($k = 2$) to solve problems \eqref{eq:Ndiv} and \eqref{eq:HJB} in all tests. 
To alleviate notation, we denote
\[E_2:=\left(\sum_{K\in\mathcal K_h}\|D^2u-\Pi_0^KD^2u_h\|_{0,K}^2\right)^{\frac12},~~E_1:=\left(\sum_{K\in\mathcal K_h}\|\nabla u-\Pi_1^K\nabla u_h\|_{1,K}^2\right)^{\frac12},\]
\[E_0:=\left(\sum_{K\in\mathcal K_h}\|u-\Pi_2^Ku_h\|_{0,K}^2\right)^{\frac12},\]
where $u_h$ will be $u_h^\cc$ or $u_h^\nc$, and $\Pi_j^K$, $j=0,1,2$ will be $\Pi_j^{K,\cc}$ or $\Pi_j^{K,\nc}$  accordingly.

\subsection{Example 1} In this test, we consider the linear elliptic equations in nondivergence form on the computational domain $\varOmega=(0,1)^2$:
\[\A:D^2u+\bm b\cdot\nabla u-cu=f\quad\text{  in } \varOmega,\qquad u=0\quad\text{on }\partial\varOmega.\]
Here,  the matrix $\A$ is given by
\[\A=\begin{pmatrix}
    2 & 1 \\
    1 & 2
  \end{pmatrix},\]
and $\bm b=(x_1~~ x_2)^T$, $c=3$. We take the true solution $u=\sin(\pi x_1)\sin(\pi x_2)$ and the load term $f$ is determined by the exact solution.
 The Cordes condition holds for $\varepsilon=9/20$, since we can choose $\lambda=1$ and therefore
 \[\frac{|\A|^2+|\bm b|^2/(2\lambda)+(c/\lambda)^2}{(\tr\A+c/\lambda)^2}=\frac{38+x_1^2+x_2^2}{96}\leq\frac{20}{49}.\]
We first employ the triangle mesh ($\mathcal K^1$) and the square mesh ($\mathcal K^2$), as depicted in Figure \ref{fig:mesh1}, to test the convergence property of our formulations. The numerical errors are presented in Tables \ref{table:conformingtri}, \ref{table:nonconformingtri}, and Figure \ref{fig:ratesquare}. The numerical results illustrate the validity of the theoretical analysis, i.e. the convergence rate
is $\mathcal O(h)$ in the $H^2$ seminorm.  Moreover, we also find that both $u_h$ and $\nabla u_h$ converge at the rate of $\mathcal O(h^2)$.

Next, we conduct our numerical experiments on more general polygonal meshes: a structured hexagonal mesh ($\mathcal K^3$), a Voronoi mesh ($\mathcal K^4$),  and a randomly distorted quadrilateral mesh ($\mathcal K^5$),
as displayed in Figure \ref{fig:mesh2}. We present the numerical results in Tables \ref{fig:ratedualTri} through \ref{fig:rateperturbRect}. Similar convergence orders to those observed on the triangular mesh are noted.

\begin{figure}
    \centering
    \subfigure{
        \includegraphics[width=0.4\textwidth]{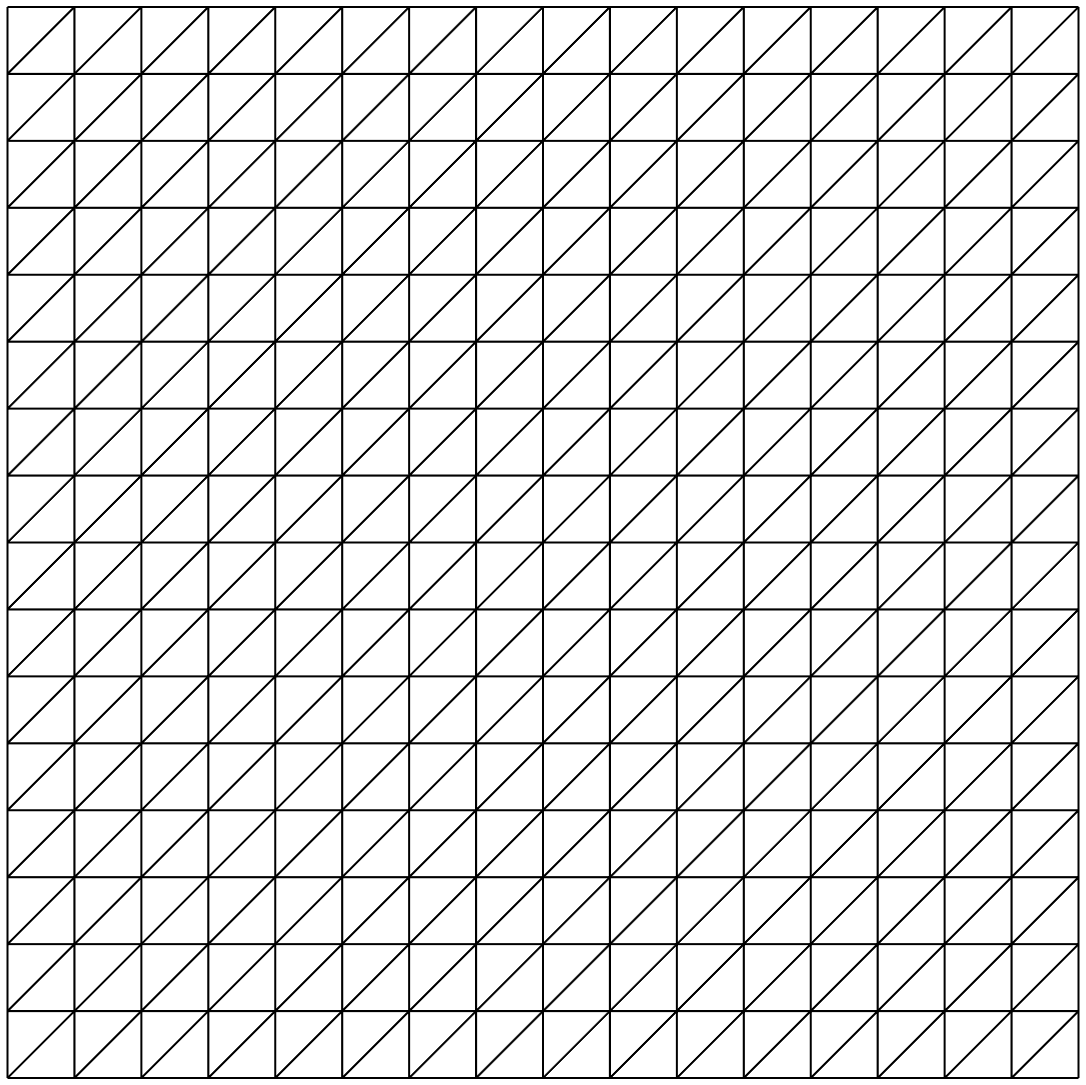}
    }
    \subfigure{
	\includegraphics[width=0.4\textwidth]{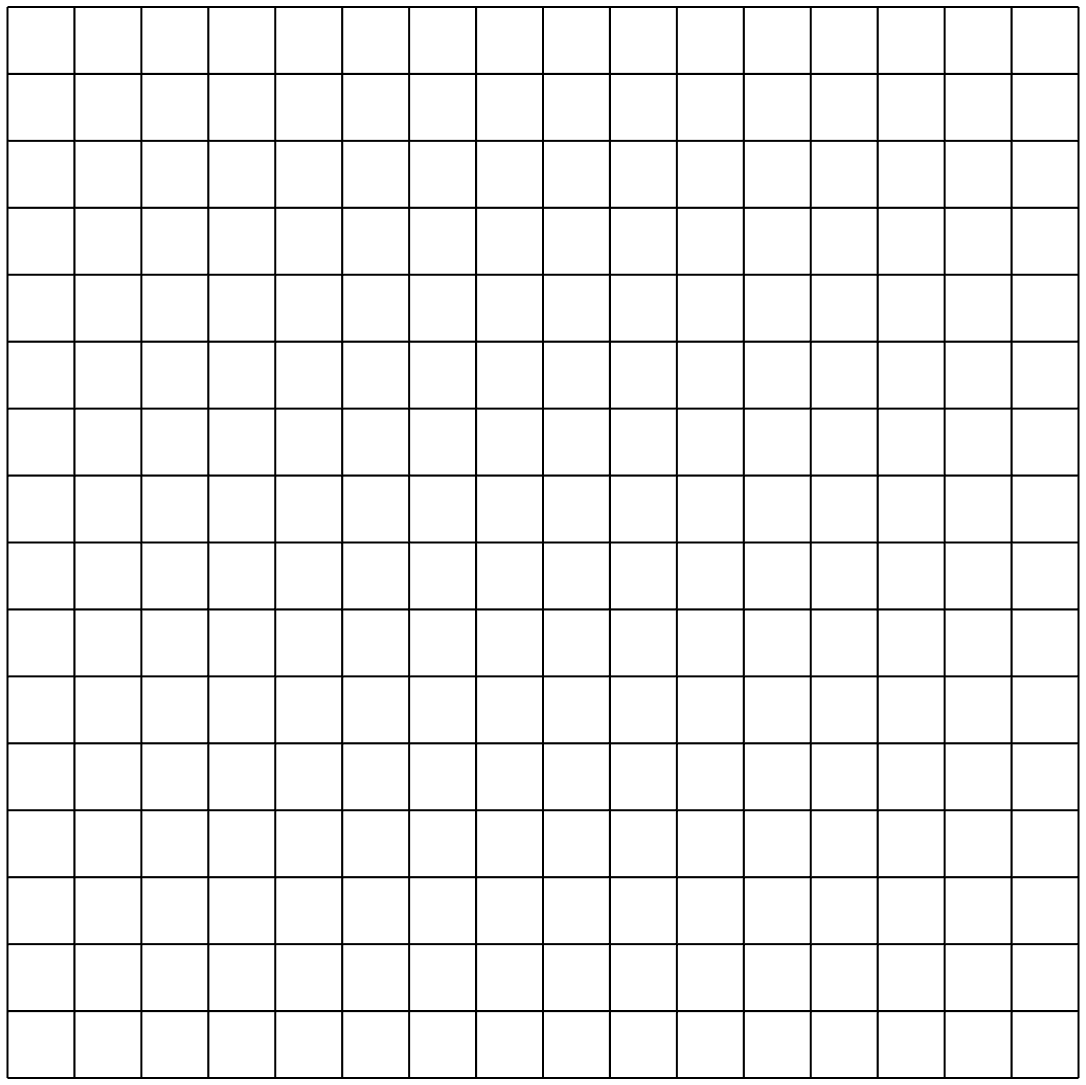}
    }
\caption{Meshes of $\mathcal K^1$ (left) and $\mathcal K^2$ (right)}
    \label{fig:mesh1}
\end{figure}

\begin{table}
\centering
\footnotesize
\caption{Errors for $C^1$-conforming VEM on the mesh $\mathcal K^1$}
\begin{tabular}{rcccccc}
\hline
 $1/h$ & $E_2$ & order& $E_1$ & order& $E_0$ & order\\ \toprule[1pt]
 8 &1.53e-00&$-$&3.50e-02&$-$&4.90e-03&$-$\\
 16 &7.68e-01&0.99&8.64e-03&2.02&1.19e-03&2.04\\
 32 &3.84e-01&1.00&2.15e-03&2.00&2.95e-04&2.01\\
 64 &1.92e-01&1.00&5.38e-04&2.00&7.37e-05&2.00\\
 128 &9.61e-02&1.00&1.34e-04&2.00&1.84e-05&2.00\\
 256 &4.81e-02&1.00&3.36e-05&2.00&4.60e-06&2.00\\ \hline
\end{tabular}
\label{table:conformingtri}
\end{table}

\begin{table}
\centering
\footnotesize
\caption{Errors for $C^0$-nonconforming VEM on the mesh $\mathcal K^1$ }
\begin{tabular}{rcccccc}
\hline
 $1/h$ & $E_2$ & order& $E_1$ & order& $E_0$ & order\\ \toprule[1pt]
 8 &1.72e-00&$-$&3.64e-02&$-$&4.93e-03&$-$\\
 16 &8.65e-01&1.00&9.10e-03&2.00&1.27e-03&1.95\\
 32 &4.33e-01&1.00&2.28e-03&2.00&3.21e-04&1.99\\
 64 &2.16e-01&1.00&5.69e-04&2.00&8.05e-05&2.00\\
 128 &1.08e-01&1.00&1.42e-04&2.00&2.01e-05&2.00\\
 256 &5.41e-02&1.00&3.56e-05&2.00&5.05e-06&2.00\\ \hline
\end{tabular}
\label{table:nonconformingtri}
\end{table}

\begin{figure}
    \centering
    \subfigure{
        \includegraphics[width=0.47\textwidth]{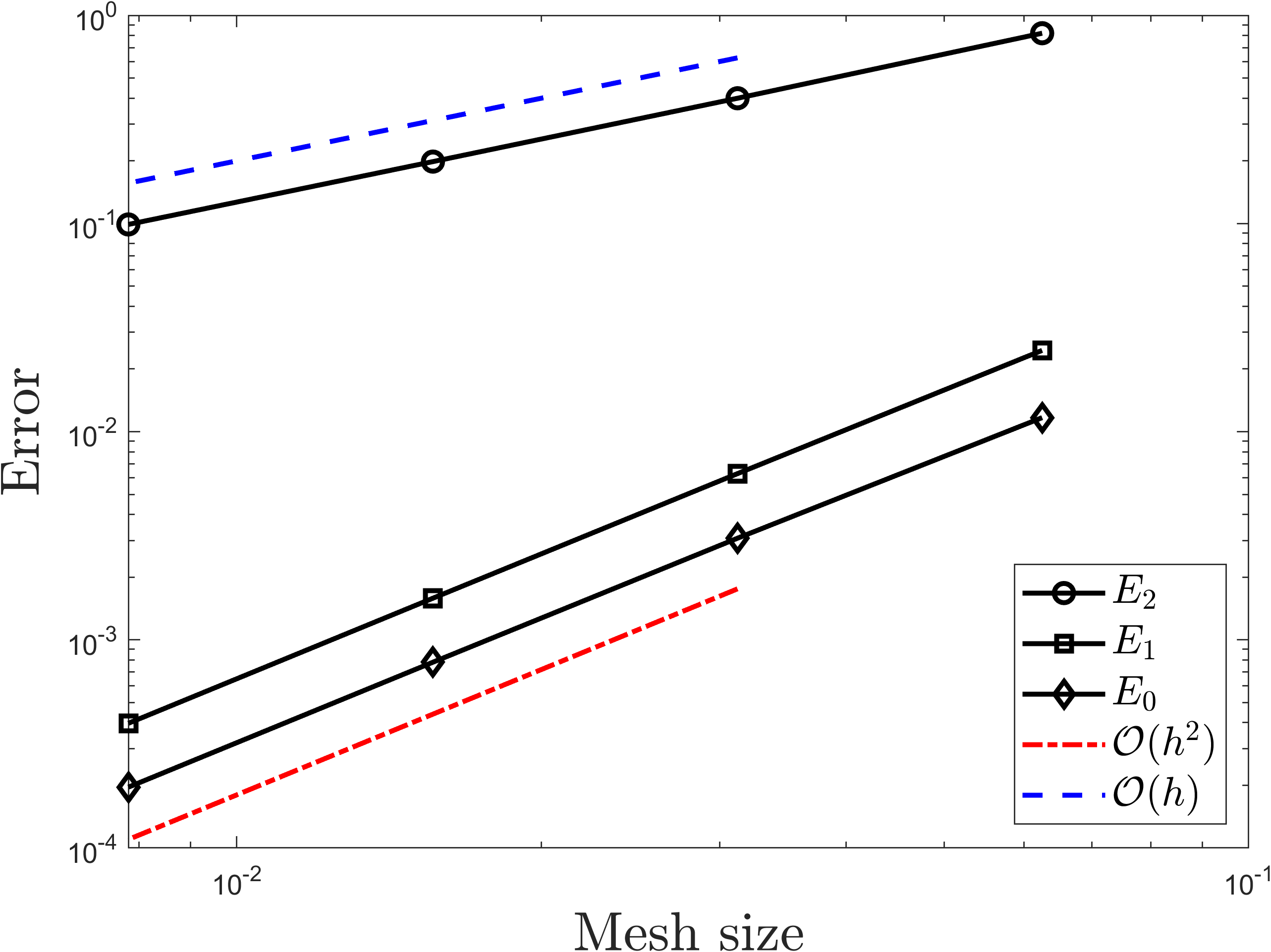}
    }
    \subfigure{
	\includegraphics[width=0.47\textwidth]{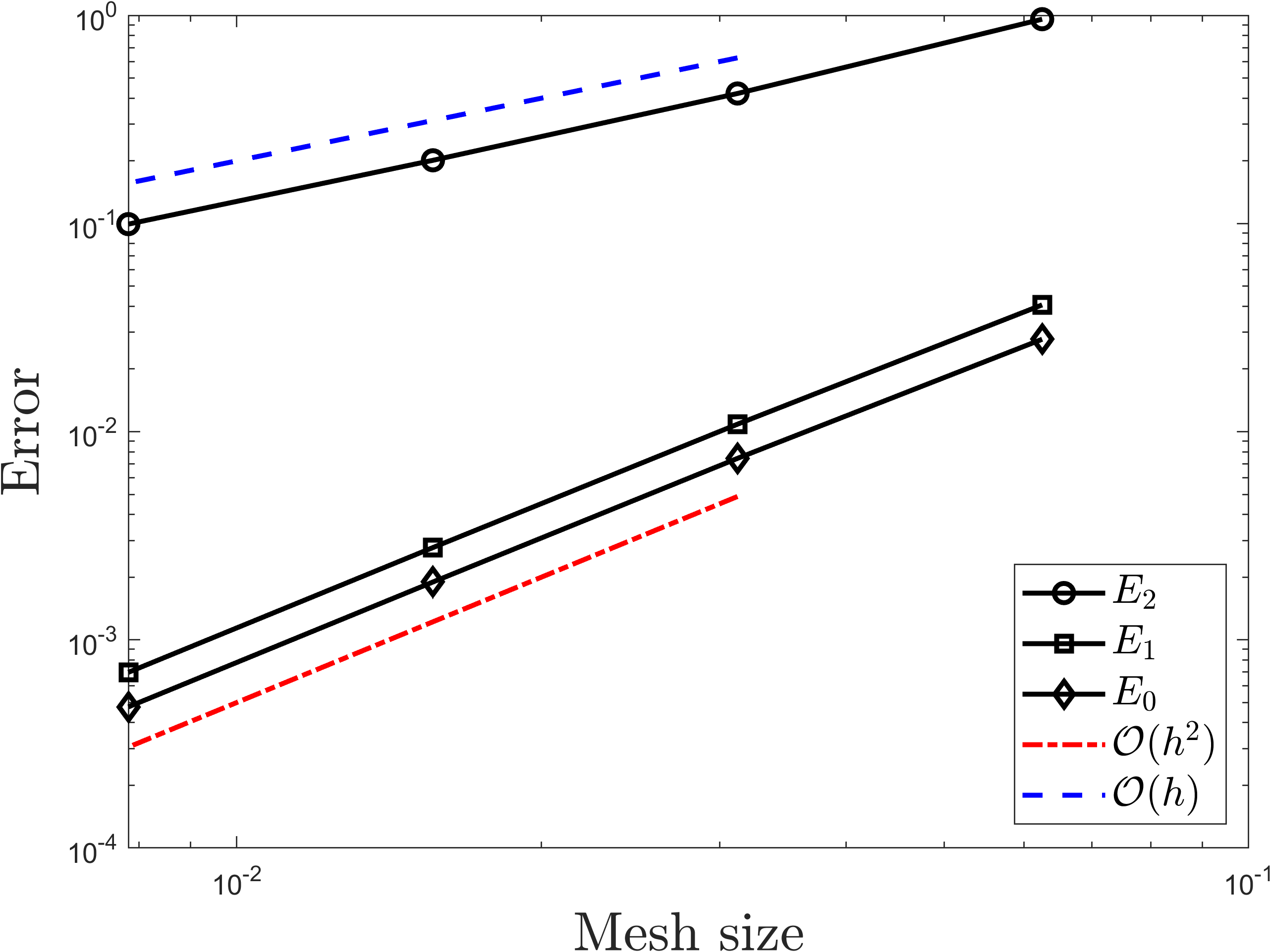}
    }
\caption{Errors for $C^1$-conforming (left) and $C^0$-nonconforming (right) VEM on the mesh $\mathcal K^2$}
    \label{fig:ratesquare}
\end{figure}

\begin{figure}
    \centering
    \subfigure{
        \includegraphics[width=0.3\textwidth]{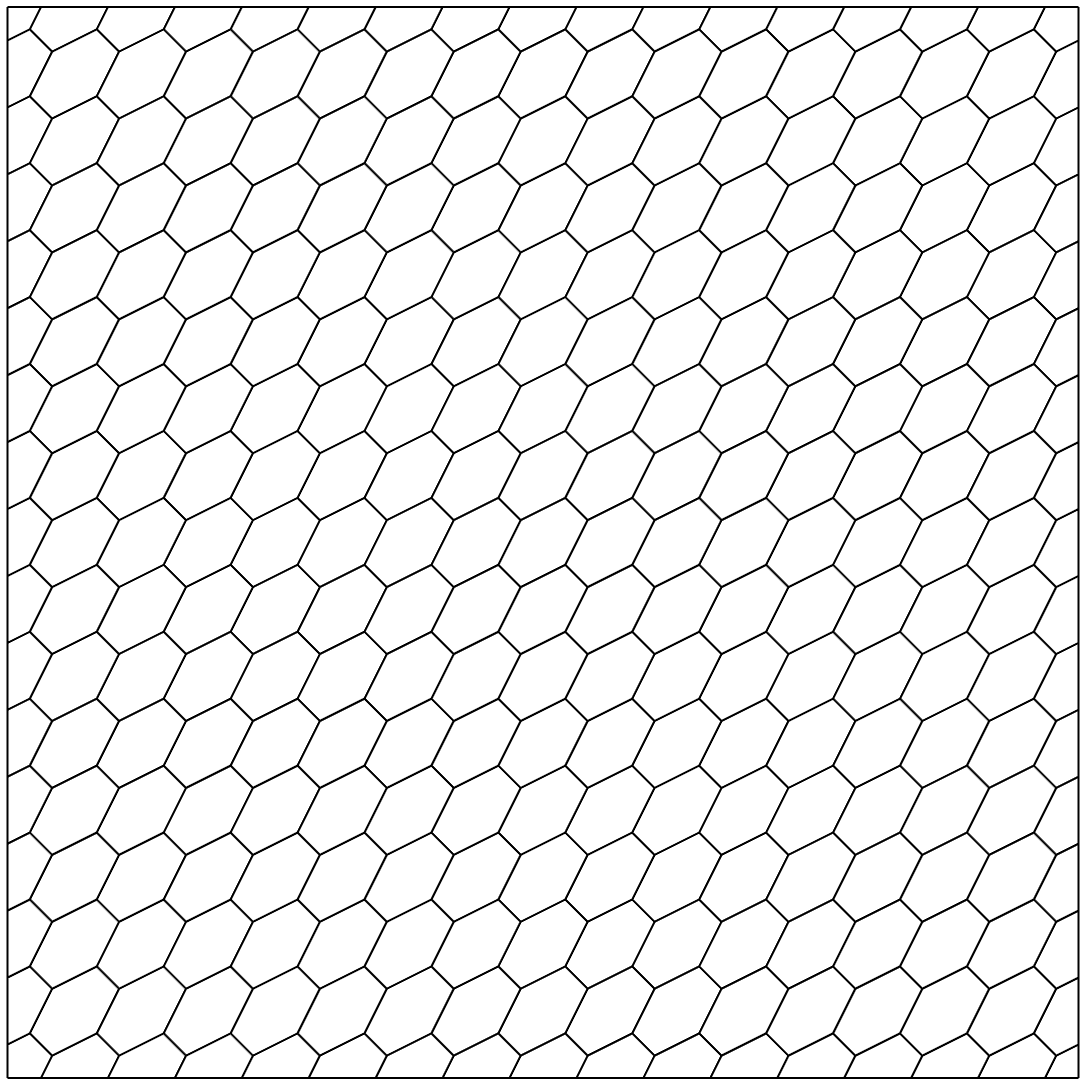}
    }
    \subfigure{
	\includegraphics[width=0.3\textwidth]{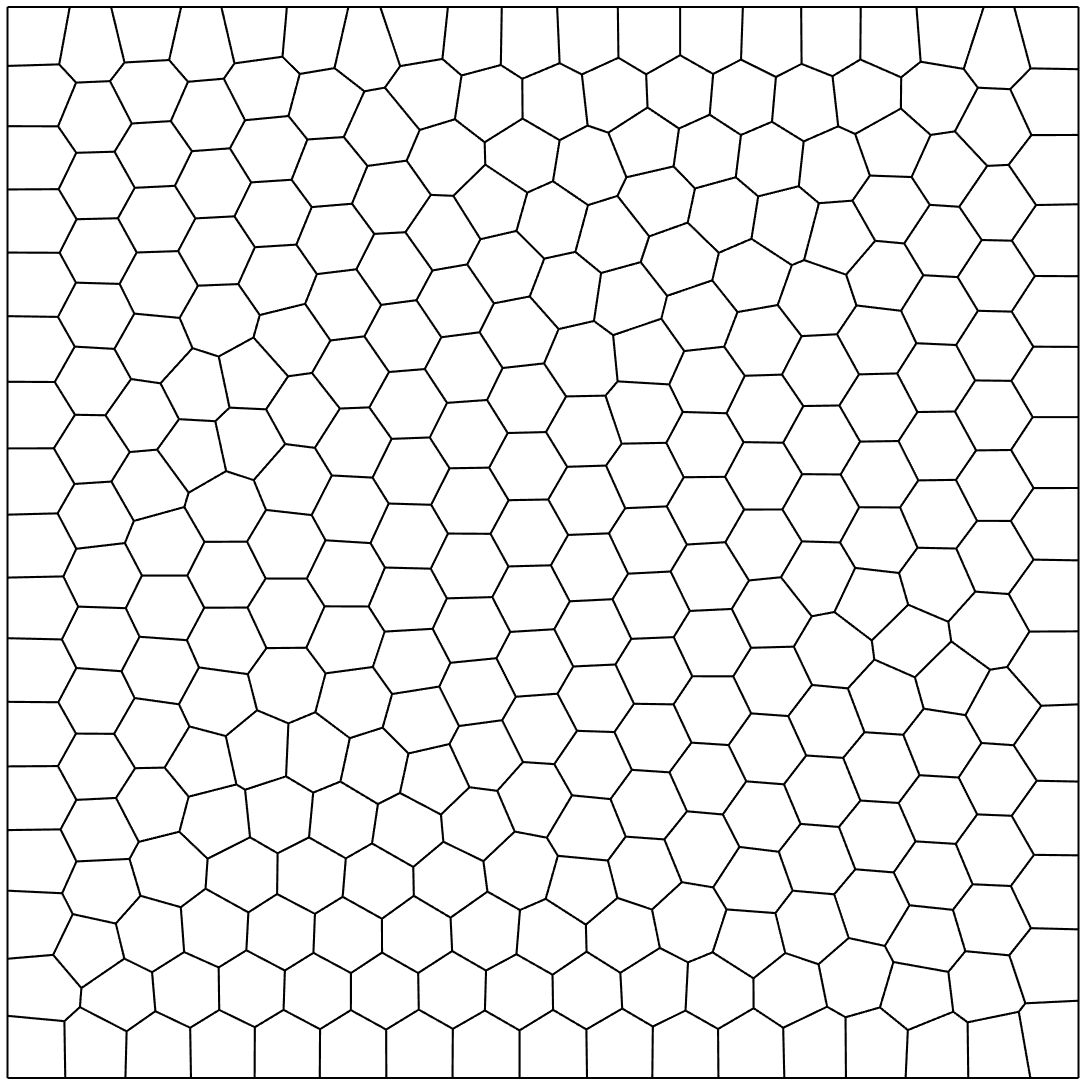}
    }
        \subfigure{
	\includegraphics[width=0.3\textwidth]{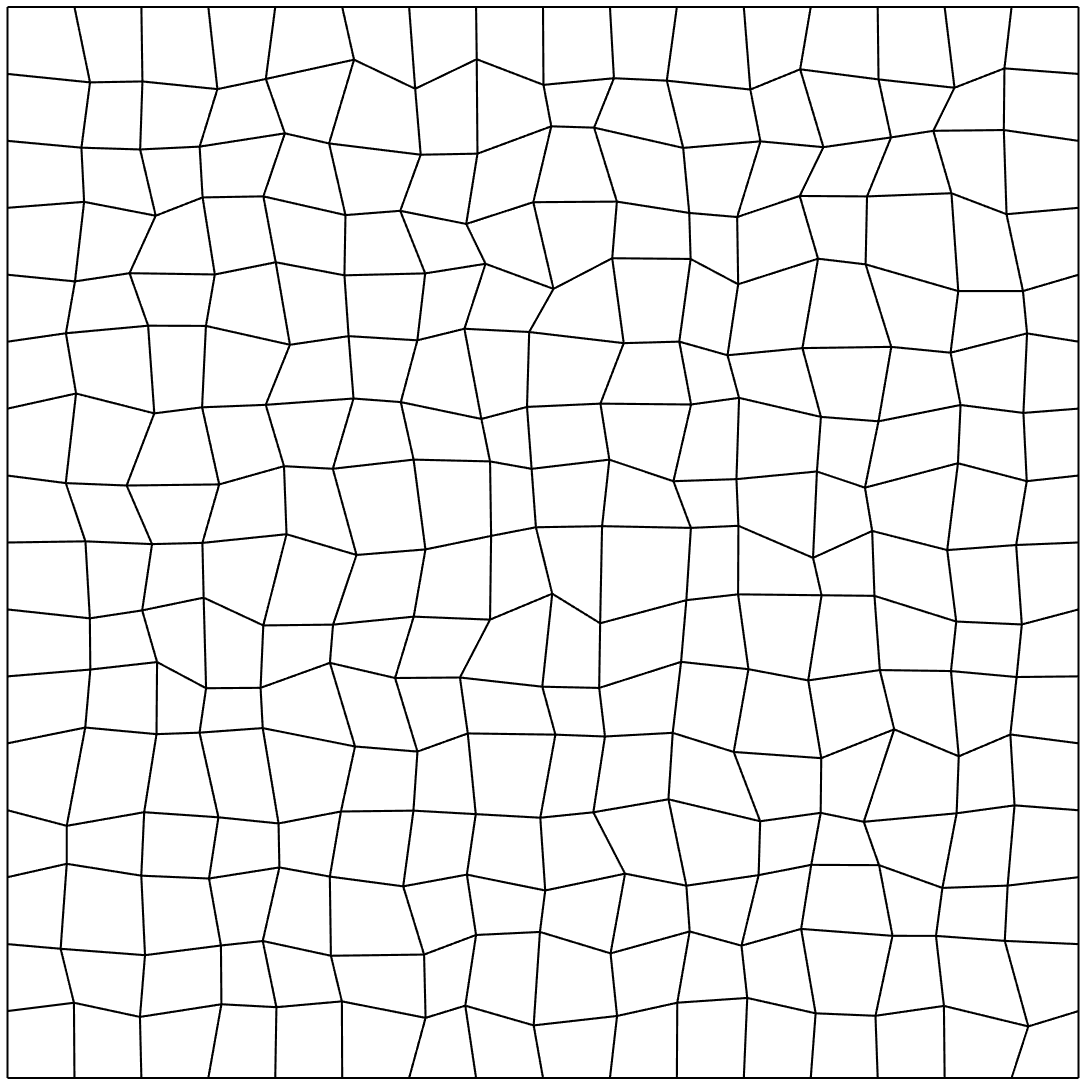}
    }
\caption{Meshes of $\mathcal K^3$ (left), $\mathcal K^4$ (middle) and $\mathcal K^5$ (right)}
    \label{fig:mesh2}
\end{figure}

\begin{figure}
    \centering
    \subfigure{
        \includegraphics[width=0.47\textwidth]{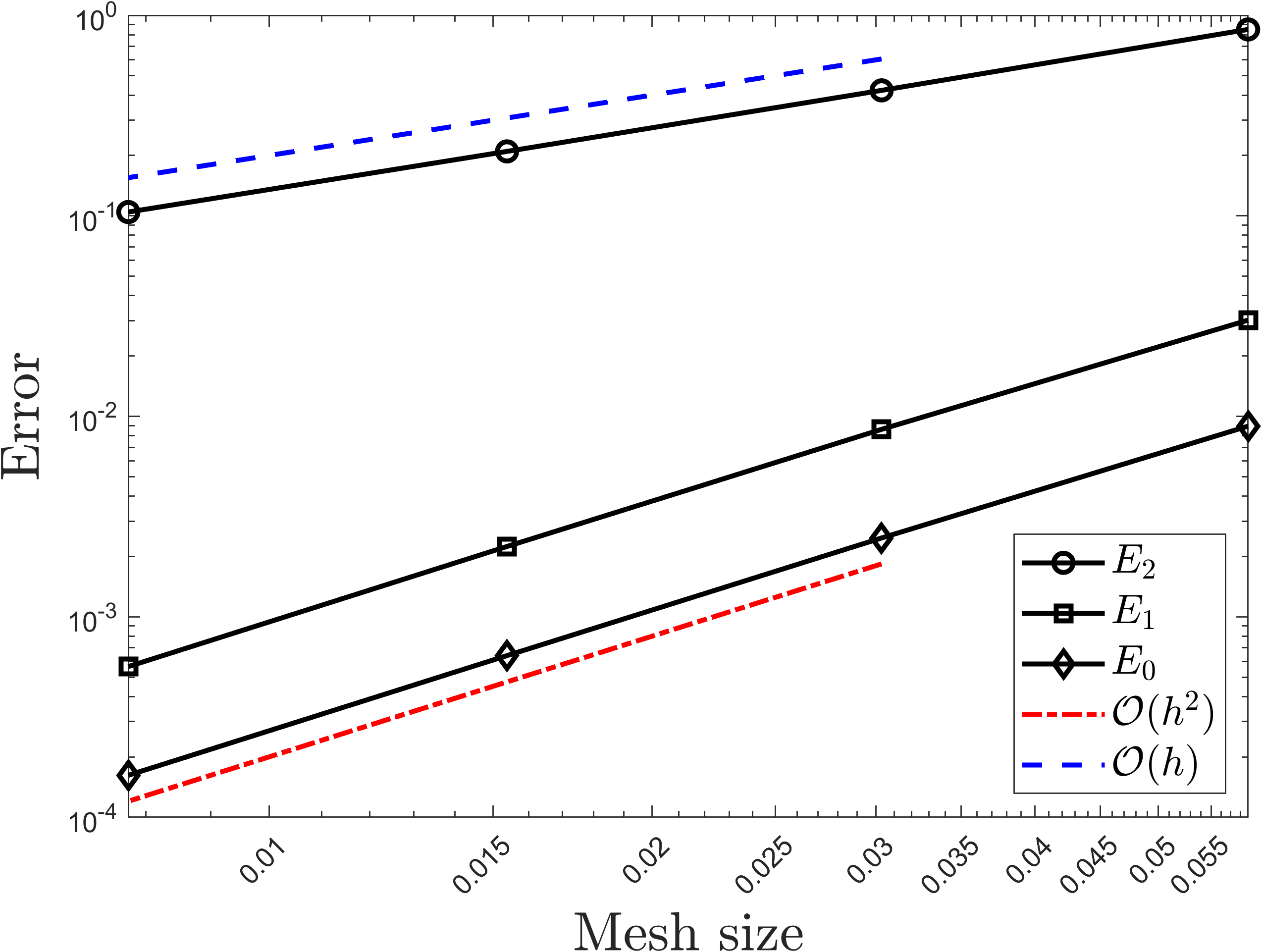}
    }
    \subfigure{
	\includegraphics[width=0.47\textwidth]{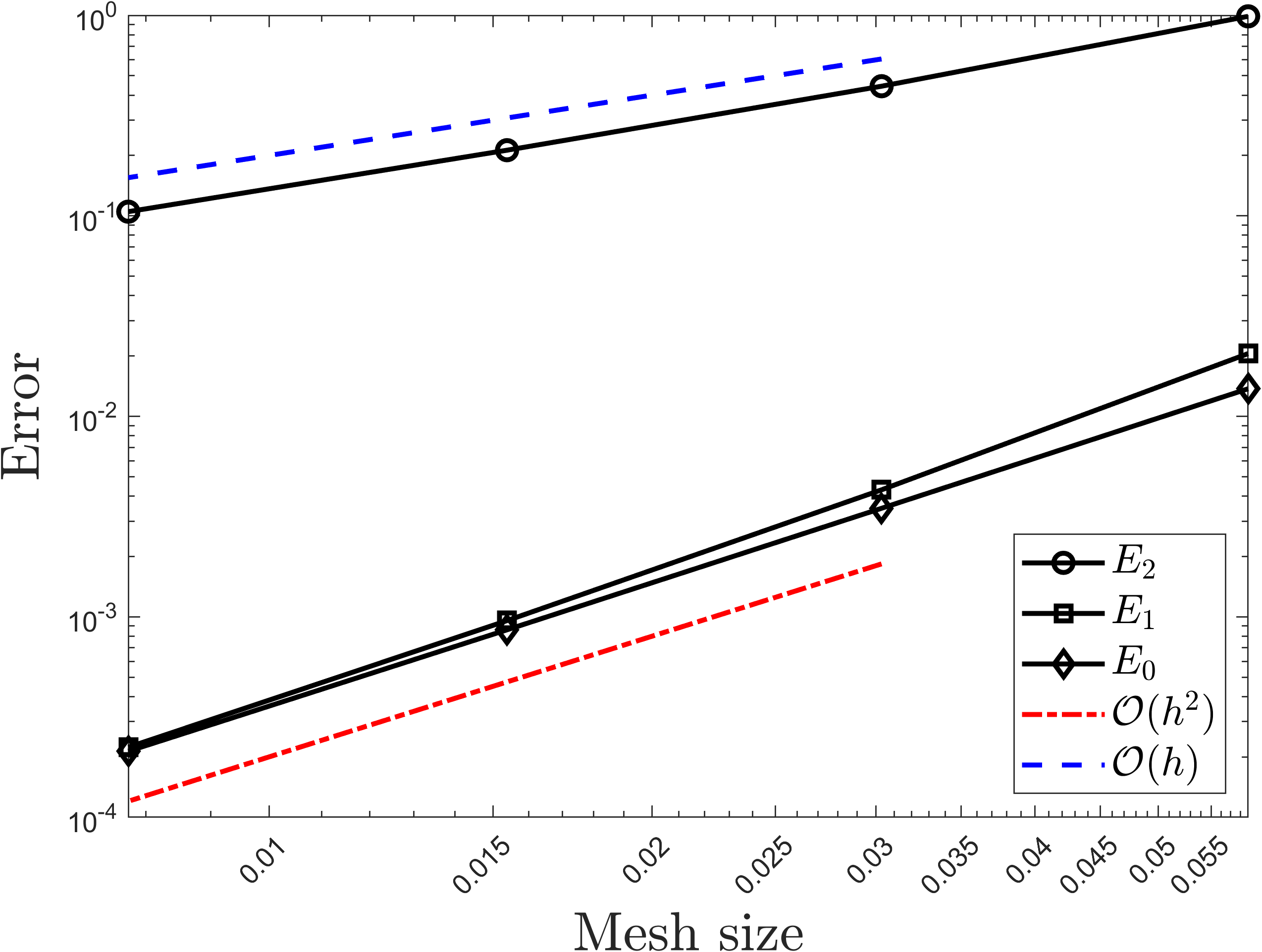}
    }
\caption{Errors for $C^1$-conforming (left) and $C^0$-nonconforming (right) VEM on the mesh $\mathcal K^3$}
    \label{fig:ratedualTri}
\end{figure}

\begin{figure}
    \centering
    \subfigure{
        \includegraphics[width=0.47\textwidth]{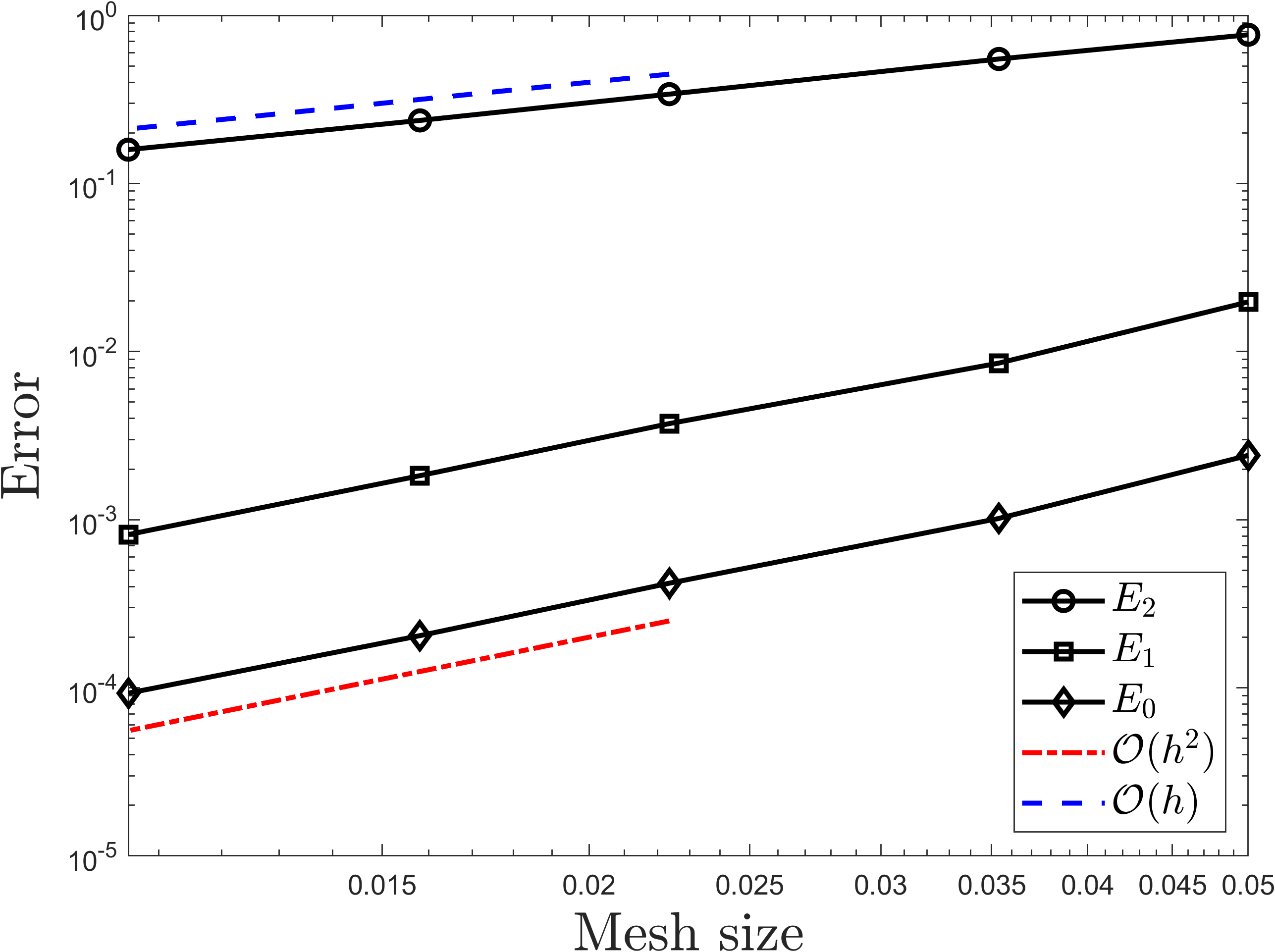}
    }
    \subfigure{
	\includegraphics[width=0.47\textwidth]{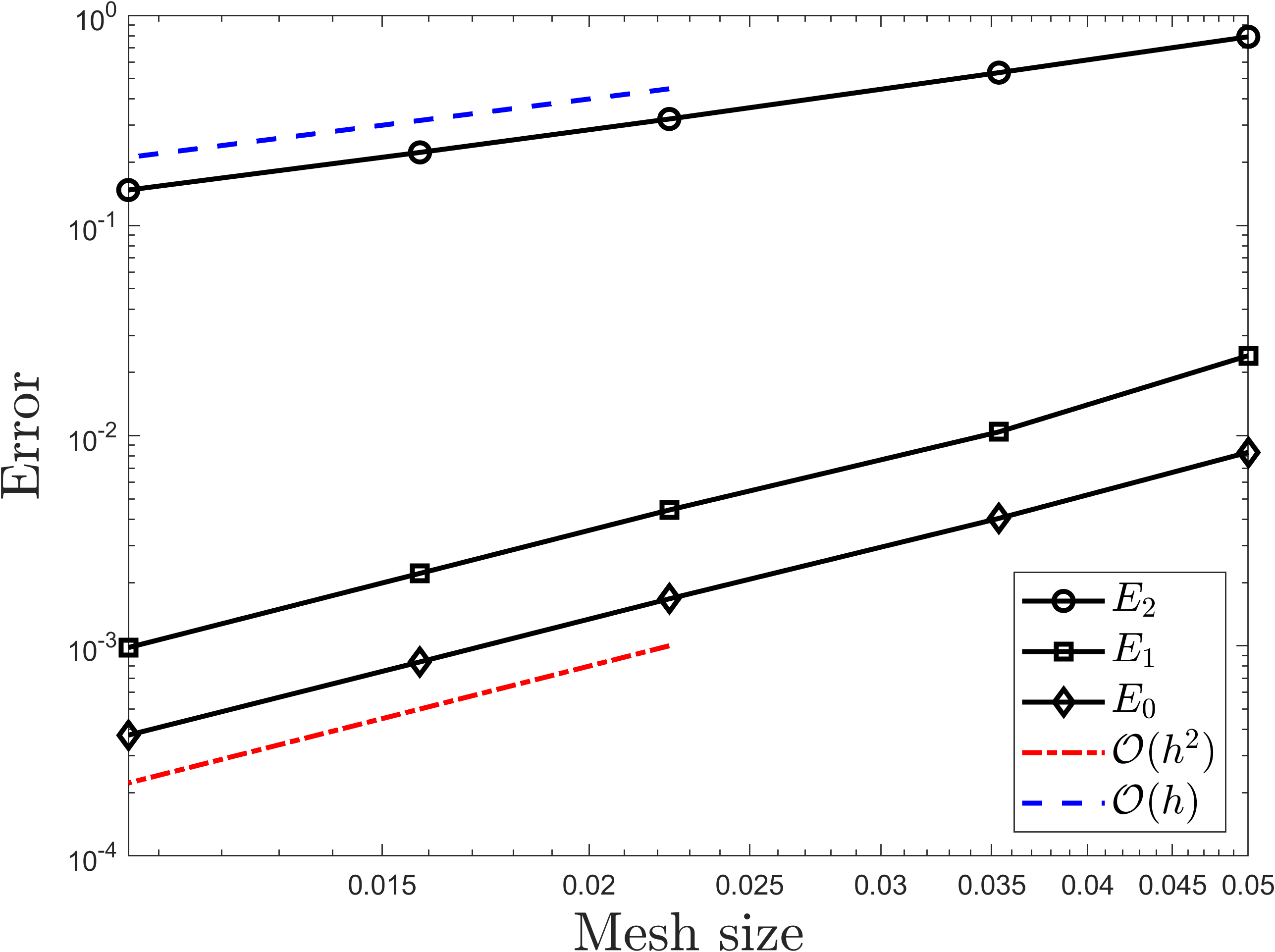}
    }
\caption{Errors for $C^1$-conforming (left) and $C^0$-nonconforming (right) VEM on the mesh $\mathcal K^4$}
    \label{fig:ratemeshdata}
\end{figure}

\begin{figure}
    \centering
    \subfigure{
        \includegraphics[width=0.47\textwidth]{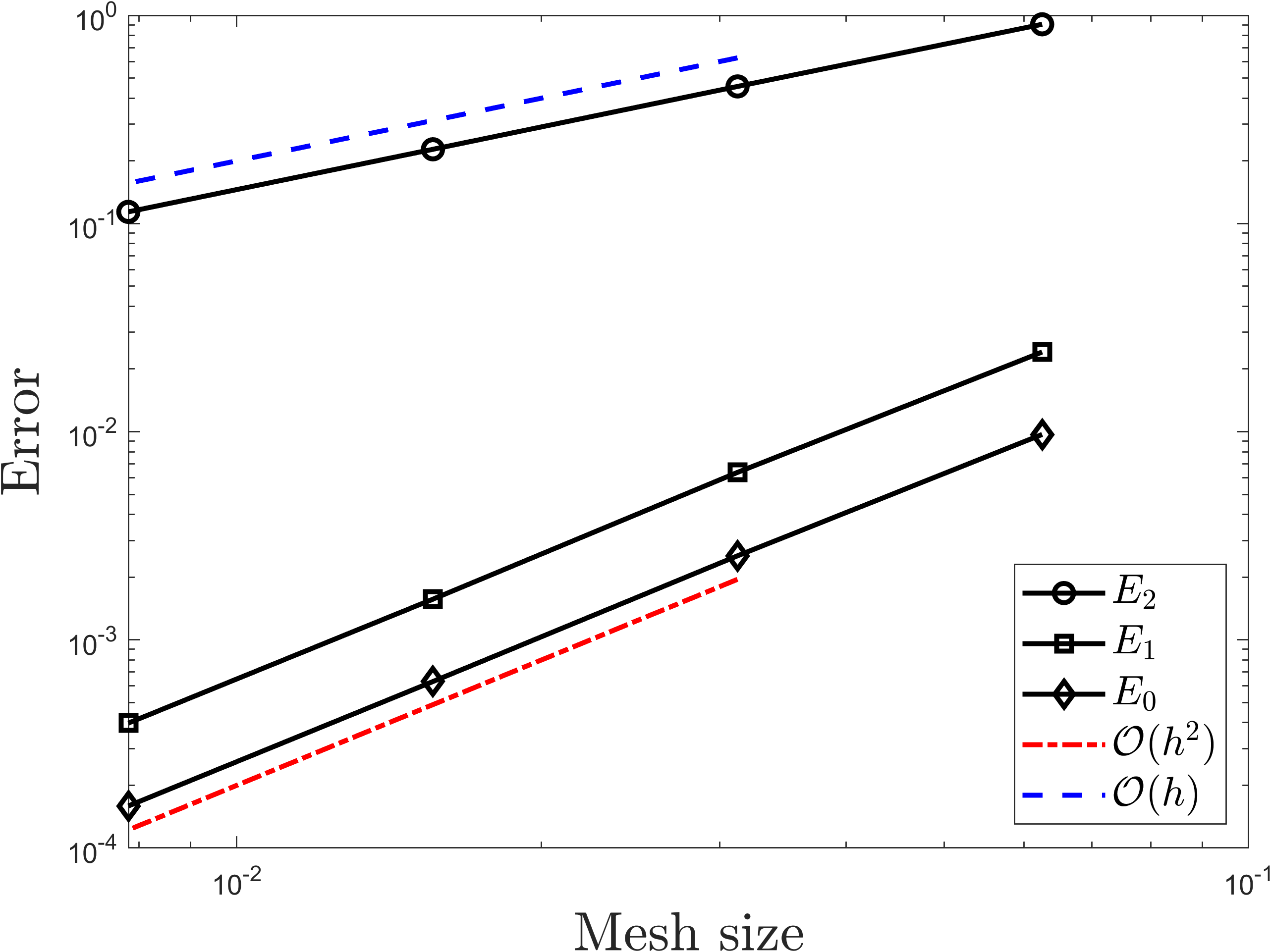}
    }
    \subfigure{
	\includegraphics[width=0.47\textwidth]{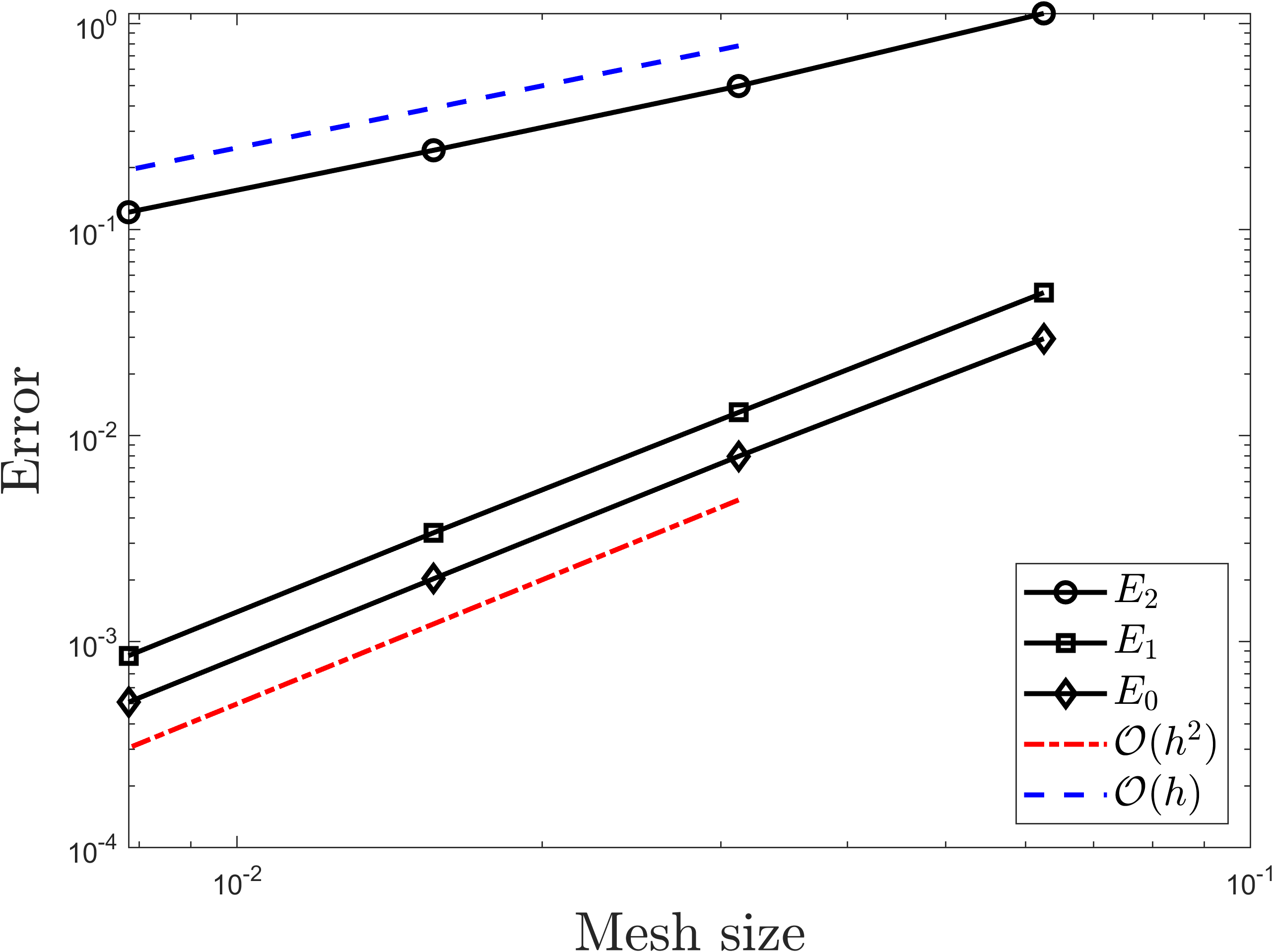}
    }
\caption{Errors for $C^1$-conforming (left) and $C^0$-nonconforming (right) VEM on the mesh $\mathcal K^5$}
    \label{fig:rateperturbRect}
\end{figure}

\subsection{Example 2} In this test, we consider an example from \cite{SS2014,Wu2021}. We solve the nonlinear HJB equation on the domain $\varOmega=(0,1)^2$.
The compact metric space is taken as $\varLambda=[0,\pi/3]\times\mathrm{SO}(2)$, where $\mathrm{SO}(2)$ represents the set
consisting all $2\times2$ rotation matrices. The coefficients matrix $\A:=\frac12\sigma^\alpha(\sigma^\alpha)^T$ with
\[\sigma^\alpha= R\begin{pmatrix}
                           1 & \sin\theta \\
                           0 & \cos\theta
                         \end{pmatrix}
                         \text{ and }
                         \alpha=(\theta,R)\in \varLambda.
                         \]
Additionally, $\bm b^\alpha=\bm0$ and $c^\alpha=\pi^2$. By choosing $\lambda=8\pi^2/7$, we can verify that the Cordes condition holds  with $\varepsilon=1/7$ though $\tr\A^\alpha=1$ and $|\A^\alpha|=(1+\sin^2\theta)/2\leq7/8$.
The exact solution is chosen as \[u=\mathrm e^{x_1x_2}\sin(\pi x_1)\sin(\pi x_2),\] and the load term $f^\alpha=\sqrt3\sin^2\theta/\pi^2+g$, where $g$ is a function determined by the exact solution as well as the governing equation, and it is independent of $\alpha$. We employ the semismooth Newton’s method introduced in Section \ref{sec:SemismoothNewton} to solve this problem, and utilize the triangle mesh $\mathcal K^1$ as the underlying mesh.
The iteration will stop when \[\left(\sum_{K\in\mathcal K_h}\|\Pi_0^KD^2u_h^{j+1}-\Pi_0^KD^2u_h^j\|_{0,K}^2\right)^{\frac12}<10^{-8}.\]
We denote
\[E_4:=\left(\sum_{K\in\mathcal K_h}\|D^2u-\Pi_0^KD^2u_h^j\|_{0,K}^2\right)^{\frac12}.\]
We display the convergence properties and the iteration number of the semismooth Newton’s method for $C^1$-conforming VEM in Figure \ref{fig:Ex2com}.
 As predicted by Theorem \ref{thm:errC}, $D^2u_h$ converges to $D^2u$ with an order of $\mathcal O(h)$, whereas $u_h$ and $\nabla u_h$ converge at the rate of $\mathcal O(h^2)$.
For the numerical results of the $C^0$-nonconforming VEM, which are presented is Figure \ref{fig:Ex2noncom}, a similar convergence property can be observed.

\begin{figure}
    \centering
    \subfigure{
        \includegraphics[width=0.47\textwidth]{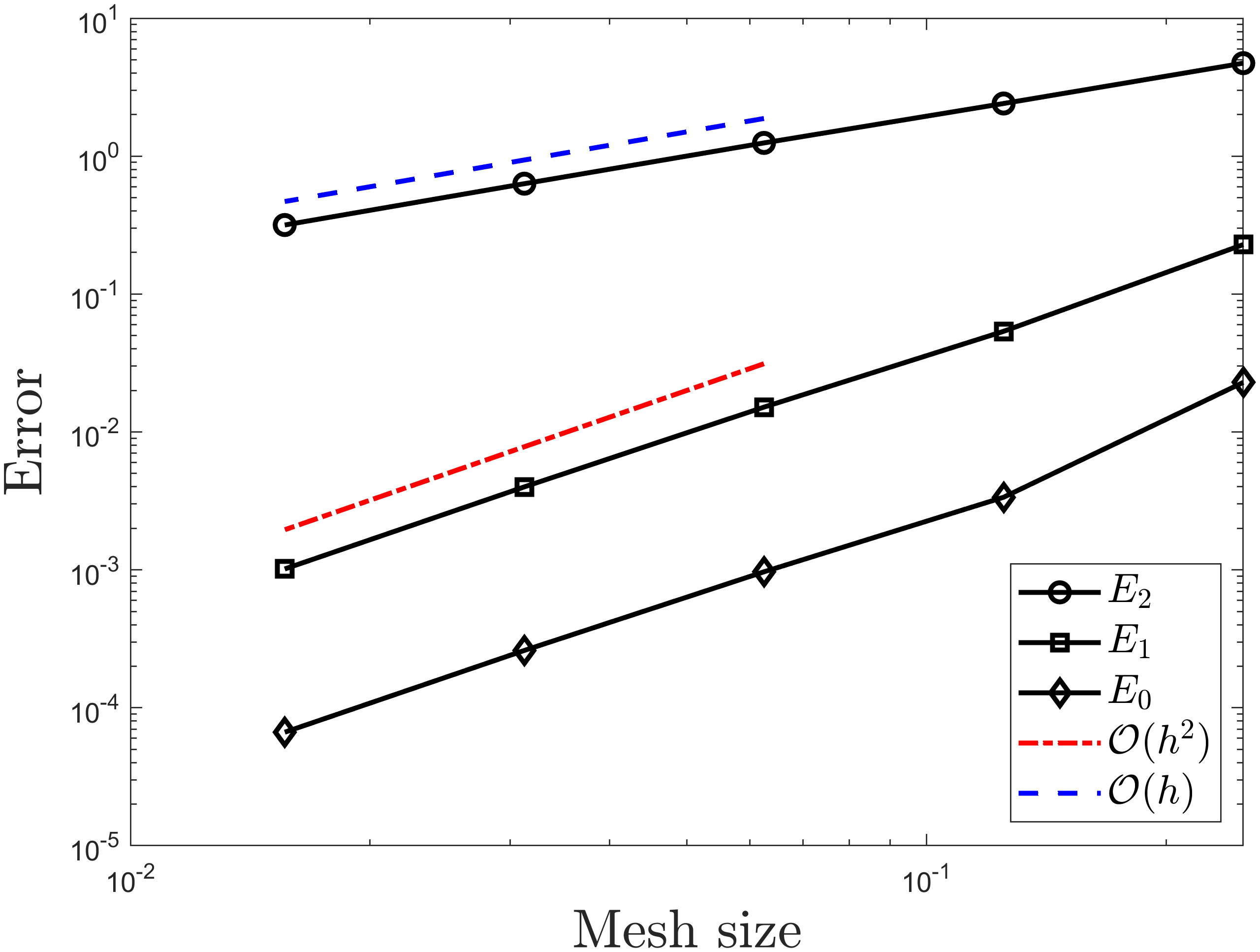}
    }
    \subfigure{
	\includegraphics[width=0.47\textwidth]{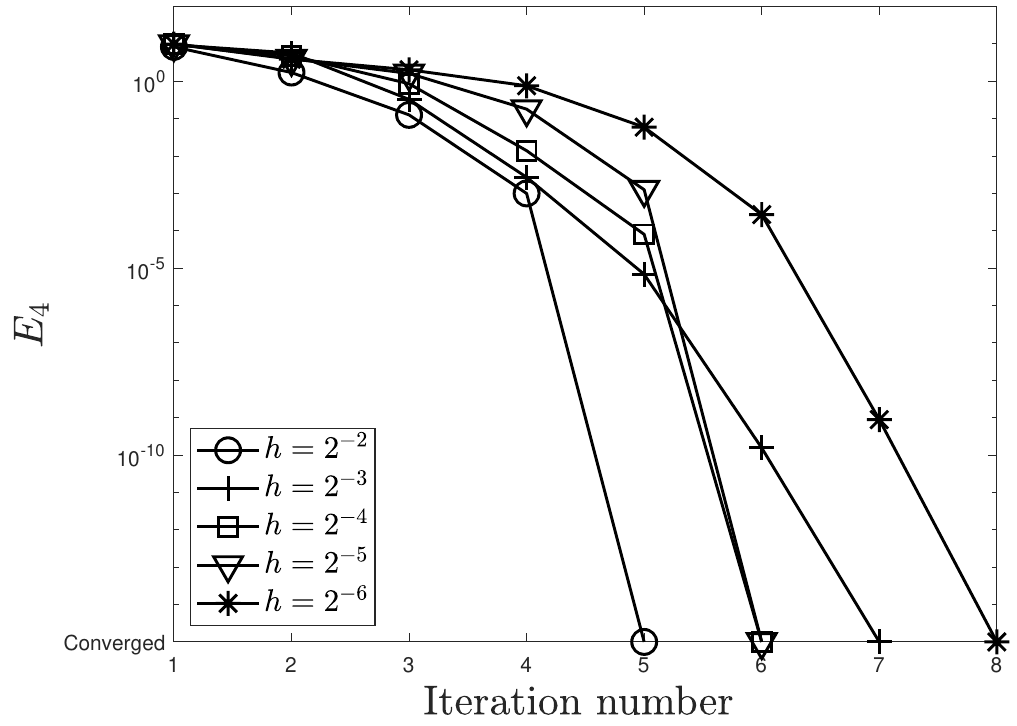}
    }
\caption{Convergence rate and iteration number for $C^1$-conforming  VEM on the mesh $\mathcal K^1$}
    \label{fig:Ex2com}
\end{figure}

\begin{figure}
    \centering
    \subfigure{
        \includegraphics[width=0.47\textwidth]{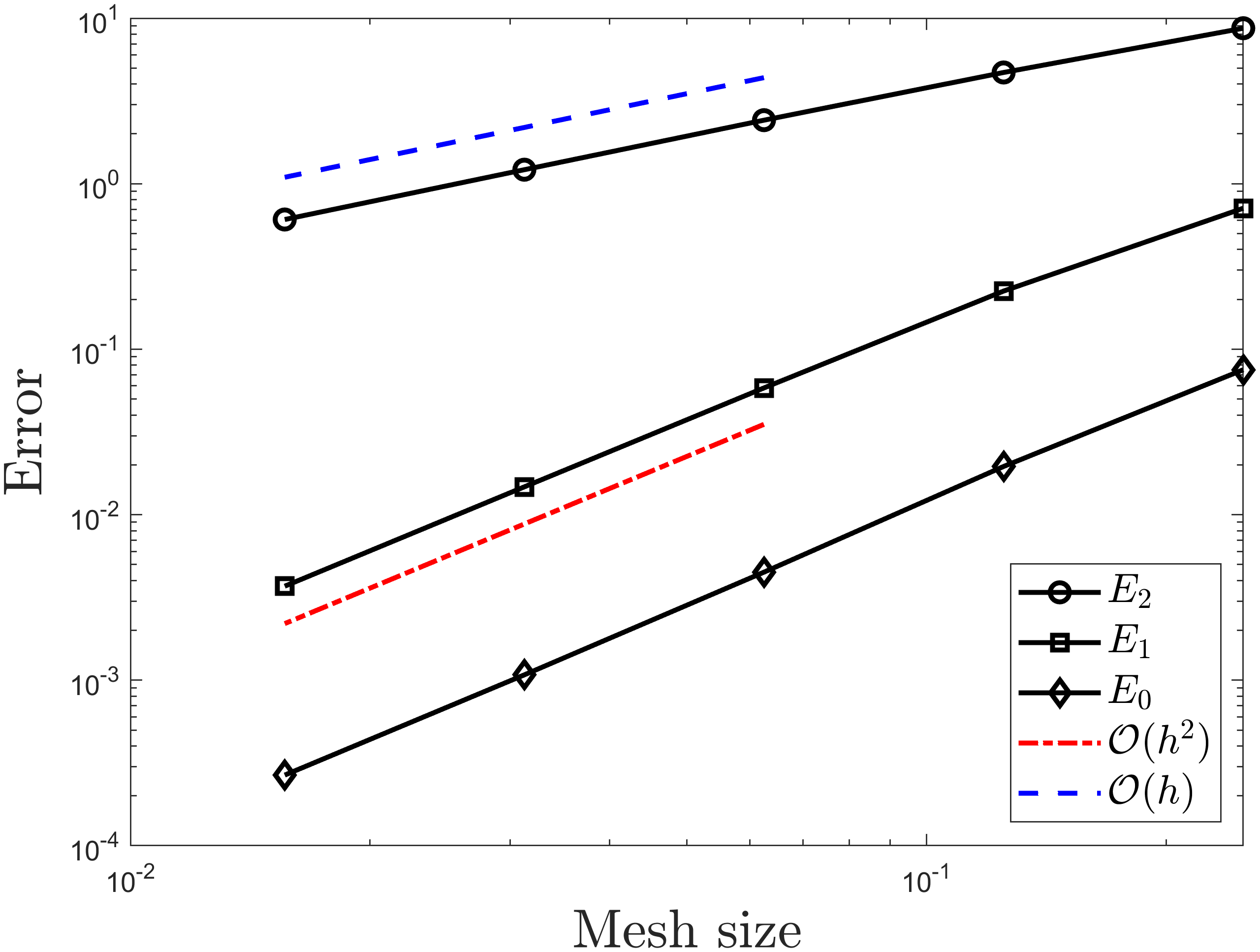}
    }
    \subfigure{
	\includegraphics[width=0.47\textwidth]{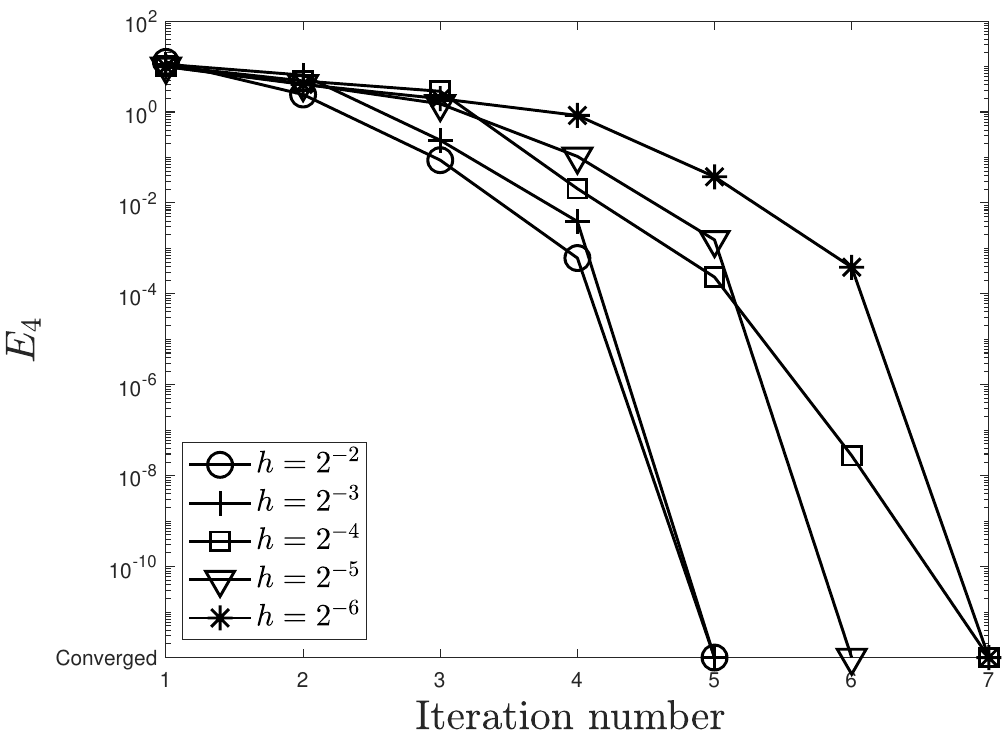}
    }
\caption{Convergence rate and iteration number for $C^0$-nonconforming  VEM on the mesh $\mathcal K^1$}
    \label{fig:Ex2noncom}
\end{figure}

\subsection{Example 3} Referring to Test 3 in \cite{NW2019}, we consider a similar example. The computational domain is the square $\varOmega=\{(x_1,x_2): |x_1|<\pi, |x_2|<\pi\}$. The compact metric set  $\varLambda$ is $\{1,2\}$, and the coefficients are defined as
\begin{align*}
 & \A^1=\begin{pmatrix}
         2 & 1/2 \\
         1/2 & 3/2
       \end{pmatrix}+\frac{x_1}{|x_1|}\frac{x_2}{|x_2|} \begin{pmatrix}
         1 & 1/2 \\
         1/2 & 1/2
       \end{pmatrix},  \\
       &
        \A^2=\begin{pmatrix}
         3/2 & 1/2 \\
         1/2 & 2
       \end{pmatrix}+\frac{x_1}{|x_1|}\frac{x_2}{|x_2|} \begin{pmatrix}
         1/2 & 1/2 \\
         1/2 & 1
       \end{pmatrix},\\
   & \bm b^1=\bm b^2=(1~~0)^T,\qquad c^1=c^2=1.
\end{align*}
The exact solution is  $u=\sin x_1\sin x_2$,  and the source function set $\{f^\alpha\}_{\alpha\in\varLambda}$ is determined by the exact solution. By taking $\lambda=1$ we see that the Cordes condition holds with $\varepsilon=1/6$. We use the same stopping criteria as in Example 2 for the semismooth Newton’s method and display the numerical results in Figure
\ref{fig:Ex3}. 
Despite the fact that the coefficient $\A^\alpha$, $\alpha\in\varLambda$, is discontinuous with respect to $\alpha$, and that $\A^1$
  and $\A^2$  exhibit discontinuities at $x=0$ or $y=0$ within this example, 
we still observe that $D^2u_h$ converges to $D^2u$ with an order of  $\mathcal O(h)$, as predicted by Theorem \ref{thm:errC}. Additionally, the convergence rate of $\mathcal O(h)$ is also observed for errors $E_1$ and $E_0$.

\begin{figure}
    \centering
    \subfigure{
        \includegraphics[width=0.47\textwidth]{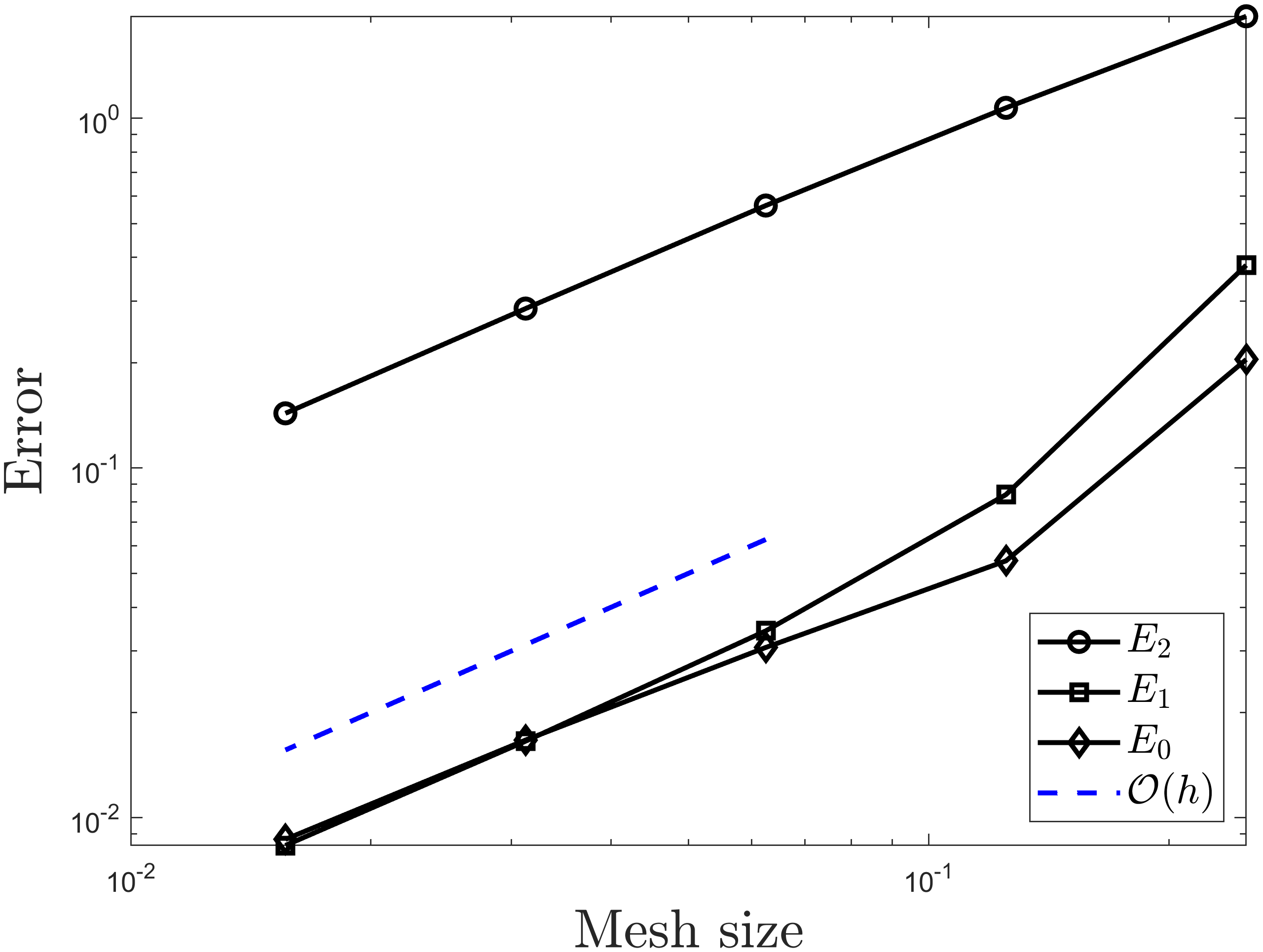}
    }
    \subfigure{
	\includegraphics[width=0.47\textwidth]{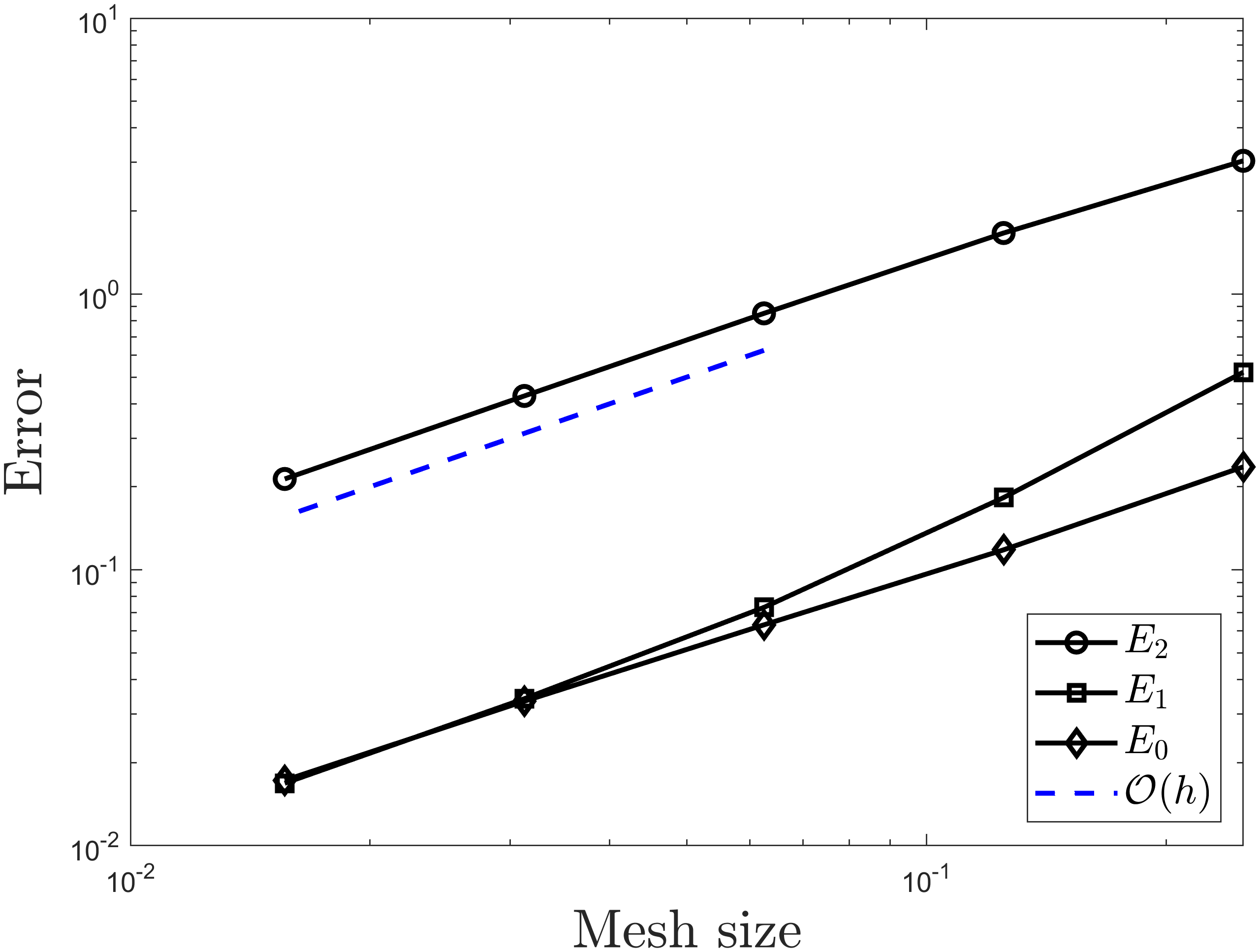}
    }
\caption{Errors for $C^1$-conforming (left) and $C^0$-nonconforming (right) VEM on the mesh $\mathcal K^1$}
\label{fig:Ex3}
\end{figure}

\section{Conclusion}\label{sec:conclusion}
We have developed and rigorously analyzed both $C^1$-conforming and $C^0$-nonconforming virtual element methods for Hamilton-Jacobi-Bellman equations. Detailed \textit{a priori} error estimates for our formulations have been derived. Furthermore, the semismooth Newton's method has been proposed for the numerical solution of the discrete systems. The methodologies presented herein demonstrate potential applicability to other nonlinear problems, including fully nonlinear Monge–Ampère equations and Isaacs equations, which will be systematically explored in future research.

\section*{Acknowledgment}
This work was supported in part by the Andrew Sisson Fund, Dyason Fellowship, the Faculty Science Researcher Development Grant of the University of Melbourne, and the NSFC grant 12131005.

\bibliographystyle{siamplain}
\bibliography{mybibfile}

\end{document}